\definecolor{light-blue}{rgb}{0.8,0.85,1}
\definecolor{light-red}{rgb}{1,.4,.4}
\definecolor{purp}{rgb}{.7,.3,1}
\definecolor{yel}{rgb}{1,1,.5}
\definecolor{cy}{rgb}{0,1,1}
\theoremstyle{plain}
\newtheorem{theorem}{Theorem}[section]
\newtheorem{corollary}[theorem]{Corollary}
\newtheorem{lemma}[theorem]{Lemma}
\newtheorem{proposition}[theorem]{Proposition}
\theoremstyle{definition}
\newtheorem{remark}[theorem]{Remark}
\newtheorem{definition}[theorem]{Definition}
\newtheorem{example}[theorem]{Example}
\newtheorem{conjecture}[theorem]{Conjecture}
\newtheorem{question}[theorem]{Question}
\newcommand{\Vol}{\mathrm{Vol}}
\newcommand{\sL}{\mathsf{L}}
\newcommand{\co}{\colon\,}
\newcommand{\bR}{\mathbb R}
\newcommand{\bC}{\mathbb C}
\newcommand{\bZ}{\mathbb Z}
\newcommand{\bP}{\mathbb P}
\newcommand{\cA}{\mathcal A}
\newcommand{\cC}{\mathcal C}
\newcommand{\cH}{\mathcal H}
\newcommand{\cK}{\mathcal K}
\newcommand{\cO}{\mathcal O}
\newcommand{\cR}{\mathcal R}
\newcommand{\pt}{\text{\textup{pt}}}
\newcommand{\lp}{\textup{(}}
\newcommand{\rp}{\textup{)}}
\newcommand{\ind}{\operatorname{ind}}
\newcommand{\Tr}{\operatorname{Tr}}
\newcommand{\inddiff}{\operatorname{inddiff}}
\newcommand{\comm}[1]{}
\newcommand{\Rtw}{R^{\text{\textup{tw}}}}
\newcommand{\Rgen}{R^{\text{\textup{gen}}}}
\newcommand{\vol}{\text{\textup{vol}}}
\newcommand{\gen}{{\mathrm{gen}+}}
\newcommand{\tw}{{\mathrm{tw}+}}
\numberwithin{equation}{section}
\title[Spin$^c$ Metrics]{Generalized positive scalar curvature on
  spin$^c$ manifolds}
\author{Boris Botvinnik}
\address{Department of Mathematics\\
University of Oregon\\
Eugene OR 97403-1222, USA} 
\email[Boris Botvinnik]{botvinn@uoregon.edu}
\urladdr{http://pages.uoregon.edu/botvinn/}
\author{Jonathan Rosenberg}
\address{Department of Mathematics\\
University of Maryland\\
College Park, MD 20742-4015, USA} 
\email[Jonathan Rosenberg]{jmr@umd.edu}
\urladdr{http://www2.math.umd.edu/\raisebox{-3pt}{~}jmr/}
\begin{document}
\begin{abstract}
  Let $(M,L)$ be a {(compact)}
  non-spin spin$^c$ manifold.  Fix a Riemannian metric
  $g$ on $M$ and a connection $A$ on $L$, and let $D_L$ be the associated
  spin$^c$ Dirac operator. Let $\Rtw_{(g,A)}:=R_g + 2ic(\Omega)$ be the
  \emph{twisted scalar curvature} (which takes values in the
  endomorphisms of the spinor bundle), where $R_g$ is the scalar
  curvature of $g$ and $2ic(\Omega)$ comes from the curvature
  $2$-form $\Omega$ of the
  connection $A$. Then the Lichnerowicz-Schr\"odinger formula for the
  square of the Dirac operator takes the form $D_L^2 =\nabla^*\nabla+
  \frac{1}{4}\Rtw_{(g,A)}$. In a previous work we proved that a closed non-spin
  simply-connected spin$^c$-manifold $(M,L)$ of dimension $n\geq 5$
  admits a pair $(g,A)$ such that $\Rtw_{(g,A)}>0$ if and only if the
  index $\alpha^c(M,L):=\ind D_L$ vanishes in $K_n$. In this paper we
  introduce a scalar-valued \emph{generalized scalar curvature}
  {$\Rgen_{(g,A)}:=R_g - 2|\Omega|_{op}$}, where $|\Omega|_{op}$ is the
  pointwise operator norm of Clifford multiplication $c(\Omega)$,
  acting on spinors. We show that the positivity condition on the
  operator $\Rtw_{(g,A)}$ is equivalent to the positivity of the scalar
  function $\Rgen_{(g,A)}$. We prove a corresponding trichotomy theorem
  concerning the curvature $\Rgen_{(g,A)}$, and study its
  implications. We also show that the space $\cR^{\gen}(M,L)$ of pairs
  $(g,A)$ with $\Rgen_{(g,A)}>0$ has non-trivial topology, and address a
  conjecture about non-triviality of the ``index difference'' map.
\end{abstract}
\keywords{positive scalar curvature, spin$^c$ manifold,
  bordism, $K$-theory, index, index difference}
\subjclass[2020]{Primary 53C21; Secondary 53C27, 58J22, 55N22, 19L41}
\thanks{BB was partially supported by Simons collaboration grant 708183}
\maketitle

\section{Introduction}
\label{sec:intro}
\subsection{Motivation and results}
Recall that a spin$^c$ manifold $(M,L)$ is a smooth oriented manifold
$M$, $\dim M=n $, equipped with a choice of a complex line bundle $L$
with the property that $c_1(L)$ reduces mod $2$ to $w_2(M)$.  Then $M$
admits a spin$^c$ Dirac operator $D_L$, which can be viewed as the
spin Dirac operator $D$ on $M$ (which always exists locally, but won't
exist globally on $M$ if $w_2(M)\ne 0$) twisted by the ``line bundle''
$L^{1/2}$.  (Note that $L^{1/2}$ is also not globally well-defined,
but the indeterminacies in $D$ and in $L^{1/2}$ cancel each other
out.)  To define the Dirac operator
$D_L$, one needs some extra data: a Riemannian
metric $g$ on $M$, a hermitian bundle metric $h$ on $L$, and a
connection $A$ on $L$.  The $K$-homology class $[D_L]\in K_n(M)$ of
the operator does not depend on any of these choices, nor does the
index $\alpha^c(M,L):=\ind D_L=p_*([D_L])\in K_n$, where $p\co
M\to\pt$.  The Lichnerowicz-Schr\"odinger formula for the square of
the Dirac operator takes the form
\begin{equation}\label{Lich-formula}
  D_L^2 =\nabla^*\nabla+ \frac{R_g}{4} + \cR_L,
\end{equation}
where $R_g$ is the scalar curvature of $M$ and
$\cR_L=\frac{1}{2}ic(\Omega)$ is $\frac{i}{2}$ times Clifford
multiplication by the curvature $2$-form $\Omega$ of the connection $A$
acting on spinors; see \cite[Theorem D.12]{lawson89:_spin}.

We define the \emph{twisted scalar curvature}
$\Rtw_{(g,A)}$ to be the function $\Rtw_{(g,A)}:=R_g + 2ic(\Omega)$,
taking values in sections of the endomorphism bundle of the spinor
bundle, so that $D_L^2 =\nabla^*\nabla+ \frac{1}{4}\Rtw_{(g,A)}$.
This function $\Rtw_{(g,A)}$ (except for a factor of $4$)
appeared in \cite{MR1162671}, where it was called the
\emph{curvature endomorphism} $\cK$ for the spin$^c$ spinor bundle. So
when $n$ is even, the condition that the operator $\Rtw_{(g,A)}$ is
positive-definite forces $\alpha^c(M,L)=\ind D_L$ to vanish. We use
the notation $\Rtw_{(g,A)}>0$ and call this condition \emph{twisted
positive scalar curvature} or \emph{twisted psc} for short.

Note that
the condition $\alpha^c(M,L)=0$ depends very much on the Chern class
$c_1(L)$.  For example, when $M=\bC\bP^2$, which is spin$^c$ but not
spin, $c_1(L)\in H^2(\bC\bP^2;\bZ)\cong \bZ$ must be odd since it must
reduce mod $2$ to $w_2(M)\ne 0$, and the condition $\alpha^c(M,L)=0$
is satisfied when $c_1(L)=\pm 1$ but not when $|c_1(L)|\ge 3$.  (See
for example \cite[Theorem 3.13]{MR508087}.)

In our previous paper \cite{BJ}, we proved a surgery theorem
for the twisted psc condition:
\begin{theorem}[{\cite[Theorem 4.2]{BJ}}]
\label{them:spincsurgery}
Let $(M,L)$ be a closed spin$^c$ manifold.  Assume that $M$ admits a
Riemannian metric $g$ and the line bundle $L$ admits a hermitian
bundle metric $h$ and a unitary connection $A$ such that
$\Rtw_{(g,A)}>0$. Let $(M', L')$ be obtained from $(M, L)$
by spin$^c$ surgery in codimension $\ge 3$.  Then $(M', L')$
admits a hermitian bundle metric $h'$, a Riemannian metric $g'$, and a
unitary connection $A'$ such that $\Rtw_{(g',A')}>0$.
\end{theorem}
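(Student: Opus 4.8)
The plan is to run the Gromov--Lawson--Schoen--Yau surgery construction --- in the form systematized by Walsh --- while carrying the line bundle $L$, its hermitian metric $h$, and its connection $A$ along with the Riemannian metric $g$. We may assume the surgery is performed along a single framed embedded sphere: fix an embedding $\iota\co S^k\times D^{n-k}\hookrightarrow M$ with $n-k\ge 3$, and let $M'$ be obtained by removing $\iota(S^k\times\mathrm{int}\,D^{n-k})$ and gluing in $D^{k+1}\times S^{n-k-1}$ along the common boundary $S^k\times S^{n-k-1}$. I will use throughout that $\Rtw_{(g,A)}>0$ is an open condition which, since $M$ is compact, is equivalent to a uniform bound $\Rtw_{(g,A)}\ge\delta\cdot\mathrm{id}$ for some $\delta>0$. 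I will also use that $c_1(L)$ restricts to $0$ on the surgery sphere --- automatically when $k\ge 3$, and as part of the data of a spin$^c$ surgery when $k\le 2$ --- so that $L$ and a hermitian metric extend over the trace $W$ of the surgery to a line bundle $\widehat L$ with $c_1(\widehat L)\equiv w_2(W)\pmod 2$; then $L':=\widehat L|_{M'}$ automatically satisfies $c_1(L')\equiv w_2(M')$.

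First I would put the metric into Gromov--Lawson standard form near the surgery sphere while leaving $A$, and hence its curvature $2$-form $\Omega$, untouched. On the tube $N=\iota(S^k\times D^{n-k})$ the endomorphism field $2ic(\Omega)$ is fixed, bounded, and self-adjoint, so the condition to be preserved, $R_g\cdot\mathrm{id}+2ic(\Omega)>0$, differs there from ordinary positivity of $R_g$ only by subtracting a bounded function. The deformation lemmas of the psc surgery literature --- whose elementary moves change $R_g$, away from the regions where they deliberately force it to blow up, by an amount that can be made arbitrarily small --- then produce a path $g_t$ with $(g_t,A)$ satisfying $\Rtw_{(g_t,A)}>0$ throughout, and with $g_1$ equal to a product $g|_{S^k}+dr^2+r^2\,g_{S^{n-k-1}}$ on a thin sub-tube $S^k\times D^{n-k}_{r_0}$.

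Next comes the geometric heart. On the standard sub-tube I would carry out the classical Gromov--Lawson bending of the Euclidean normal factor into a torpedo-type metric of revolution: one equal to $dr^2+r^2\,g_{S^{n-k-1}}$ near radius $r_0$, whose profile curve bends over so that near its core the metric becomes $g|_{S^k}$ plus a cylinder on a round $(n-k-1)$-sphere of small radius $\epsilon$. Because $n-k-1\ge 2$, the scalar curvature of that sphere factor is $(n-k-1)(n-k-2)\epsilon^{-2}$, which for $\epsilon$ small dominates every bounded contribution --- both the bounded scalar curvature of $g|_{S^k}$ and the bounded term $2|\Omega|_{op}$ --- so $\Rtw>0$ survives the bending, and the attached handle $D^{k+1}\times S^{n-k-1}$ receives a metric $g'$ with $R_{g'}$ uniformly large on it. This is precisely the estimate of the psc case, the one change being that the ``error budget'' must also absorb the bounded Clifford term. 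To finish the handle data I would extend the connection: $\widehat L$ is trivial on the handle, so I would take $A'$ equal to $A$ outside the surgery region and, on the handle, a connection on the trivial bundle that agrees with $A|_{S^k\times S^{n-k-1}}$ near the gluing locus and is flat near the core $D^{k+1}_{1/2}\times S^{n-k-1}$ --- obtained by extending the connection $1$-form inward, damped by a fixed cutoff in the $D^{k+1}$-direction --- so that its curvature $\Omega'$ is a bounded $2$-form vanishing near the new sphere.

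It then remains to assemble the pieces: outside the (former) sub-tube, the pair $(g',A')$ coincides with the pair $(g_1,A)$ produced by the first step, for which $\Rtw>0$ holds everywhere; on the sub-tube together with the new handle, the bending estimate makes $R_{g'}$ uniformly large against the bounded curvature of $A'$, hence $\Rtw>0$ there as well. Rounding the corners introduced by the construction inside the open set $\{\Rtw>0\}$ yields $(g',h',A')$ with $\Rtw_{(g',A')}>0$, as required. The step I expect to be the main obstacle is the first one: arranging the Gromov--Lawson--Walsh normal-form deformation so that it stays inside $\{\Rtw>0\}$ rather than merely inside $\{R_g>0\}$. Conceptually this should be routine, since the extra term $2ic(\Omega)$ is bounded and held fixed; the actual work is to re-run each quantitative estimate of that machinery with the target inequality shifted by a bounded amount, checking that all error terms can be made small relative to the uniform gap $\delta$. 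Everything downstream is then a bookkeeping-heavy but conceptually standard transcription of the psc surgery theorem into the twisted setting.
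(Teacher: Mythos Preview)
This theorem is not proved in the present paper: it is quoted from the authors' earlier work \cite[Theorem~4.2]{BJ}, and no proof appears here. So there is no ``paper's own proof'' to compare against.

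That said, your proof strategy is the correct and expected one, and is almost certainly what \cite{BJ} does. The key observation --- that the Clifford term $2ic(\Omega)$ is a bounded endomorphism field held fixed during the metric deformation, so that the Gromov--Lawson bending (which drives $R_g\to+\infty$ on the handle because $n-k-1\ge 2$) eventually dominates it --- is exactly right, and your identification of the main technical point (re-running the normal-form deformation with the target inequality shifted by a bounded amount) is accurate. One small refinement: rather than saying the connection is extended ``flat near the core,'' it is cleaner to note that since $L'$ is trivial over the handle, \emph{any} smooth extension of $A$ has bounded curvature there, and that is all the estimate needs.
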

Then from this and a bordism argument,
we proved the following:
\begin{theorem}[see {\cite[Theorem 4.2]{BJ} and \cite[Corollary 5.2]{BJ}}]
\label{them:spincpsccondition}  
Let $(M,L)$ be a simply connected closed spin$^c$ manifold
which is not spin, with $n=\dim M\geq 5$. Then $(M,L)$ admits a
Riemannian metric $g$, a
hermitian bundle metric $h$, and a unitary connection $A$ such that
$\Rtw_{(g,A)}>0$, if and only if the index $\alpha^c(M,L)$ vanishes in
$K_n$.
\end{theorem}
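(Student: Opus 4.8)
The ``only if'' direction is the observation already recorded after \eqref{Lich-formula}: if some $(g,h,A)$ realizes $\Rtw_{(g,A)}>0$, then $D_L^2=\nabla^*\nabla+\tfrac14\Rtw_{(g,A)}$ is strictly positive (uniformly, by compactness of $M$), so $\ker D_L=0$; for $n$ even this forces $\alpha^c(M,L)=\ind D_L=0$ in $K_n\cong\bZ$, and for $n$ odd $K_n=0$, so there is nothing to prove. For the converse the plan is to run the Gromov--Lawson--Stolz machine in the twisted setting: (a) prove that the twisted-psc condition is a $\spinc$-bordism invariant among closed simply connected manifolds of dimension $n\ge5$, using Theorem~\ref{them:spincsurgery}; (b) reduce the theorem to the statement that every $\spinc$-bordism class of vanishing index contains a simply connected twisted-psc manifold; and (c) prove that statement by identifying the kernel of the index with an ideal of geometrically twisted-psc classes.

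For (a), let $(W,\cL)$ be a $\spinc$-bordism between closed simply connected $\spinc$-manifolds $(M_0,L_0)$ and $(M_1,L_1)$ of dimension $n\ge5$. First perform interior surgeries on $W$ along embedded circles and $2$-spheres to make $W$ simply connected; the restriction of $\cL$ to any $1$- or $2$-sphere in $W$ is trivial, so these are $\spinc$-surgeries, of codimension $\ge n-1\ge4$. Next, by Smale's handle-trading for simply connected cobordisms of dimension $n+1\ge6$ --- as used in the Gromov--Lawson argument --- $W$ can be rebuilt from $M_0\times[0,1]$ by finitely many handle attachments whose traces on $M_0$ are surgeries of codimension $\ge3$; since each attaching sphere bounds the core disk of its handle in $W$, over which $\cL$ is trivial, these are $\spinc$-surgeries. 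Applying Theorem~\ref{them:spincsurgery} along this sequence transports twisted psc from $(M_0,L_0)$ to $(M_1,L_1)$, which proves (a).

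For (b), twisted psc is preserved by Cartesian product with an arbitrary closed $\spinc$-manifold: under $g\mapsto\lambda^{2}g$ both $R_g$ and the Clifford term $c(\Omega)$ scale like $\lambda^{-2}$, so $\Rtw$ scales like $\lambda^{-2}$, and shrinking one factor lets its twisted scalar curvature dominate the bounded-below contribution of the other factor. Twisted psc is likewise preserved by connected sum, and by simultaneous orientation-reversal and conjugation of the line bundle; hence the set $P_*$ of classes in $\Omega^{\spinc}_*$ carried by closed simply connected twisted-psc $\spinc$-manifolds (of each dimension $\ge5$) is an ideal, contained in $\ker(\alpha^c\co\Omega^{\spinc}_n\to K_n)$ by the ``only if'' direction. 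Since $\alpha^c$ is the composite of the connective $\spinc$ Atiyah--Bott--Shapiro orientation $\Omega^{\spinc}_*\to\ku_*$ with the map $\ku_*\to\KU_*=K_*$, which is injective in non-negative degrees, this kernel equals $\ker(\Omega^{\spinc}_*\to\ku_*)$. Together with (a) the theorem reduces to
\begin{equation}\label{eq:keyincl}
\ker\!\bigl(\Omega^{\spinc}_n\longrightarrow\ku_n\bigr)\ \subseteq\ P_n\qquad(n\ge5):
\end{equation}
once $\ker(\Omega^{\spinc}_*\to\ku_*)$ is generated as an ideal by twisted-psc classes, any element of it is represented by a single closed simply connected twisted-psc manifold (via connected sums, orientation-reversal for signs, and surgery to make the auxiliary factors simply connected, all available in dimension $\ge5$), whence (a) forces $(M,L)$ itself to be twisted psc whenever $\alpha^c(M,L)=0$.

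Step (c), establishing \eqref{eq:keyincl}, is the heart of the matter, and I expect it to be the main obstacle. Following Stolz's proof of the simply connected Gromov--Lawson--Rosenberg conjecture, the plan is to exhibit an explicit family of twisted-psc $\spinc$-manifolds generating $\ker(\Omega^{\spinc}_*\to\ku_*)$ as an ideal; the natural candidates are total spaces of $\CP^{2}$-bundles carrying a fibrewise $\cO(1)$ --- i.e.\ projectivizations $\bP(V)$ of rank-$3$ complex vector bundles over simply connected bases, equipped with a tautological line bundle twisted by a pull-back so that the total space is $\spinc$ --- together with $\HP^{2}$-bundles to handle the purely-spin part. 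For the $\CP^{2}$-bundles two geometric points must be verified: the total space carries twisted psc, by a B\'erard-Bergery/O'Neill argument that shrinks the fibres while retaining Fubini--Study metrics and canonical connections (again using the scaling behaviour of $\Rtw$); and its index vanishes, which is more delicate than the computation $\alpha^c(\CP^{2},\cO(1))=0$, because the fibrewise integral of $\hat A$ against twisted line-bundle data over a $\bP(V)$-bundle is not naively multiplicative, so one must check that \emph{all} the fibrewise-integrated twisted $\hat A$-classes attached to such bundles vanish. The genuinely hard, purely homotopy-theoretic step is to prove that these constructions generate \emph{all} of $\ker(\Omega^{\spinc}_*\to\ku_*)$ in degrees $\ge5$; I would attack it as Stolz did, via an Anderson--Brown--Peterson-type $2$-local splitting of $\MSpinc$ into a wedge of connective $\ku$-module spectra and $H\bZ/2$-summands, together with an Adams spectral sequence computation identifying the subideal generated by the bundle constructions with the full kernel.
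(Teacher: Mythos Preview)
The present paper does not contain a proof of this theorem; it is quoted from the authors' earlier paper \cite{BJ} (their Theorem~4.2 and Corollary~5.2), so there is nothing in this paper to compare your argument against beyond the one-line summary ``from this and a bordism argument.'' Your ``only if'' direction is correct and matches the paper's discussion verbatim. Your framework for the ``if'' direction --- surgery invariance (step (a)) plus reduction to generators of $\ker(\Omega^{\spinc}_*\to\ku_*)$ (steps (b),(c)) --- is the standard Gromov--Lawson--Stolz template and is almost certainly the shape of the argument in \cite{BJ}.

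That said, your proposal is a \emph{plan}, not a proof: you yourself flag step~(c) as ``the main obstacle'' and describe what you ``would'' do rather than doing it. The bordism-theoretic input --- that $\ker(\Omega^{\spinc}_*\to\ku_*)$ is generated as an ideal by classes represented by twisted-psc total spaces --- is the entire content of Corollary~5.2 of \cite{BJ}, and it requires a genuine calculation. Two remarks on your sketch. First, your proposed generators are imported wholesale from Stolz's spin argument ($\HP^2$-bundles) together with a $\spinc$ analogue ($\CP^2$-bundles with fibrewise $\cO(1)$); the $2$-local structure of $\MSpinc$ is rather different from that of $\MSpin$ (the Anderson--Brown--Peterson splitting of $\MSpinc$ involves suspensions of $\ku$ and $H\bZ/2$, not $\ko$), so the actual generating family in \cite{BJ} may well look different from what you propose, and the $\HP^2$-bundles may be unnecessary. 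Second, even for your candidate $\CP^2$-bundles you note but do not resolve the non-multiplicativity issue for the fibrewise $\alpha^c$. So while your outline is on the right track, the substantive step is precisely the one you have left open; to complete the proof you must either carry out the splitting/Adams computation yourself or cite the specific result from \cite{BJ}.
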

Given a spin$^c$ manifold $(M,L)$ we define the
\emph{generalized scalar curvature} $\Rgen_{(g,A)}$ of a triple
$(g,h,A)$, where $g$ is a Riemannian metric on $M$, $A$ is a
connection on the line bundle $L$, and $h$ is a metric on the bundle
$L$, to be the \emph{scalar-valued} function $\Rgen_{(g,A)}:=R_g -
2|\Omega|_{op}$, where $|\Omega|_{op}$ is the pointwise operator
norm of Clifford multiplication $c(\Omega)$, acting on
spinors.\footnote{Recall that if $x\in M$,
$|\Omega|_{op}(x)=\max \{\vert c(\Omega)\psi(x)\vert : \vert
\psi(x)\vert =1\}$, where the maximum is taken over all spinors
$\psi$ with norm $1$ at the point $x$.}
{Note that when $M$ is spin, one can choose $L$ to be the trivial
  bundle and the connection $A$ to be flat, so in  this case
  $\Rgen_{(g,A)}=R_g$.  So in that sense $\Rgen_{(g,A)}$ generalizes the
  notion of scalar curvature when we pass from spin manifolds
  to spin$^c$ manifolds.}

We observe that the positivity condition on the operator $\Rtw_{(g,A)}$
is equivalent to the positivity of the scalar function $\Rgen_{(g,A)}$ (see
Lemma \ref{lem:gpsccond}).

Recall the trichotomy theorem of Kazdan-Warner, which can be
formulated as follows:
\begin{theorem}[Kazdan-Warner {\cite{MR365409,MR0394505,MR0375154}}]
\label{thm:KW-int}
  Every closed connected manifold $M$ of dimension $\ge 3$
  falls into exactly one of the following three classes:
  \begin{enumerate}
  \item those admitting a psc metric, in which case every smooth function
    on $M$ is the scalar curvature of some metric;
  \item those admitting a metric $g$ with $R_g\equiv 0$ but not a metric with
    $R_g\ge 0$ and $R_g$ positive somewhere --- in this case a metric with
    $R_g\equiv 0$ is necessarily Ricci-flat, and the possible scalar
    curvature functions on $M$ are $0$ and the functions negative somewhere;
  \item all other manifolds, those not admitting any metric with
    nonnegative scalar curvature --- in this case, the possible scalar
    curvature functions of metrics on $M$ are exactly those functions
    which are negative somewhere.
  \end{enumerate}
\end{theorem}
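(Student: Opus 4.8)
Since this is the classical Kazdan--Warner trichotomy, the plan is to recall how it follows from two fundamental prescribed-scalar-curvature theorems together with the elementary spectral theory of the conformal Laplacian. First I would isolate the two analytic inputs, which for a fixed closed connected $M^n$ with $n\geq 3$ say: (A) any smooth function that is negative somewhere on $M$ is the scalar curvature of some Riemannian metric; and (B) if $M$ moreover admits a \psc{} metric, then \emph{every} smooth function on $M$ is the scalar curvature of some metric. Both are proved by analyzing the semilinear elliptic equation governing the change of $R_g$ under a conformal rescaling $\tilde g = u^{4/(n-2)} g$, namely $-\tfrac{4(n-1)}{n-2}\Delta_g u + R_g u = f\,u^{(n+2)/(n-2)}$ with $u>0$, after first deforming $g$ --- not merely within its conformal class --- so that $R_g$ is a negative constant, identically $0$, or a positive constant as appropriate; one then handles separately the case where $f$ is negative somewhere, the case $f\geq 0$ with $f\not\equiv 0$, and the case $f\equiv 0$, by sub/supersolution and variational arguments. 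This PDE analysis is the crux and the genuinely hard part, and I would quote it from \cite{MR365409,MR0394505,MR0375154}.

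Next I would record the conformal sign dichotomy. Writing $L_g = -\tfrac{4(n-1)}{n-2}\Delta_g + R_g$ for the conformal Laplacian and $\lambda_1(L_g)$ for its lowest eigenvalue, the sign of $\lambda_1(L_g)$ is a conformal invariant, $M$ admits a \psc{} metric if and only if $\lambda_1(L_g) > 0$ for some $g$, and a metric with $R_g\geq 0$ and $R_g\not\equiv 0$ automatically has $\lambda_1(L_g)>0$ (compare $L_g$ with $-\tfrac{4(n-1)}{n-2}\Delta_g$ and use positivity and simplicity of the ground state). Hence $M$ lies in exactly one of three classes: (i) it admits a \psc{} metric; (ii) it admits a metric with $R_g\equiv 0$ but no \psc{} metric; (iii) it admits no metric with $R_g\geq 0$ at all. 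These classes are by construction mutually exclusive and exhaustive, and they will turn out to be exactly classes (1), (2), (3) of the statement; what is left is to identify the scalar-curvature functions in each.

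Finally I would match the classes with the claims. In class (i), input (B) is precisely assertion (1). In class (iii), every function that is negative somewhere occurs by (A), and no function that is $\geq 0$ everywhere can occur by definition of the class, giving (3). In class (ii), the function $0$ occurs by hypothesis and every function negative somewhere by (A); conversely, if some metric $g'$ had $R_{g'}\geq 0$ and positive somewhere then $\lambda_1(L_{g'})>0$ and $M$ would admit a \psc{} metric, a contradiction --- so the realizable functions are exactly $0$ together with those negative somewhere, which is (2). It remains to see that a metric $g$ with $R_g\equiv 0$ in class (ii) is Ricci-flat, for which I would use Bourguignon's observation: if $\mathrm{Ric}_g\not\equiv 0$, the path $g_t = g - t\,\mathrm{Ric}_g$ satisfies $\tfrac{d}{dt}\big|_{0}\int_M R_{g_t}\,dV_{g_t} = \int_M |\mathrm{Ric}_g|^2\,dV_g > 0$ by the Einstein--Hilbert first variation together with $R_g\equiv 0$, so testing $L_{g_t}$ against constants and using first-order perturbation of the simple eigenvalue $\lambda_1(L_g)=0$ gives $\tfrac{d}{dt}\big|_{0}\lambda_1(L_{g_t}) = \Vol(g)^{-1}\int_M |\mathrm{Ric}_g|^2\,dV_g > 0$; thus $g_t$ has $\lambda_1>0$ for small $t>0$ and $M$ would lie in class (i), contradicting (ii). Beyond the quoted Kazdan--Warner PDE inputs, the only non-formal points are this last variational computation and the spectral characterization of positive scalar curvature; the rest is bookkeeping.
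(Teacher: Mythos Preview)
The paper does not prove this theorem: Theorem~\ref{thm:KW-int} is stated as the classical Kazdan--Warner trichotomy with citations to \cite{MR365409,MR0394505,MR0375154}, and no proof is given in the paper itself. It serves only as motivation for the paper's own spin$^c$ analogue, Theorem~\ref{thm:trichotomy-int}.

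Your sketch is a correct and standard outline of the classical argument. The two analytic inputs (A) and (B), the conformal-invariance of the sign of $\mu_1(\sL_g)$, the observation that $R_g\ge 0$ with $R_g\not\equiv 0$ forces $\mu_1(\sL_g)>0$, and Bourguignon's variational argument for Ricci-flatness in class~(2) are exactly the ingredients one finds in the original Kazdan--Warner papers and subsequent expositions. Since the paper itself offers nothing to compare against beyond the citations, there is no divergence to report; your approach is essentially the one the cited references take.
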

We prove a corresponding trichotomy theorem concerning the generalized scalar
curvature $\Rgen_{(g,A)}$:
\begin{theorem}
\label{thm:trichotomy-int}
  Every closed connected spin$^c$ manifold $(M,L)$ of dimension at
  least three falls into exactly one of the following three classes:
  \begin{enumerate}
  \item those admitting a pair $(g,A)$ with $\Rgen_{(g,A)}>0$, in which
    case every smooth function on $M$ is the generalized scalar
    curvature of some pair $(g',A')$, $g'$ a Riemannian metric on $M$ and
    $A'$ a connection on $L$;
  \item those admitting a pair $(g,A)$ with $\Rgen_{(g,A)}\equiv 0$ but not
    a pair $(g,A)$ with $\Rgen_{(g,A)}$ positive everywhere --- in this
    case a smooth function on $M$ is
    the generalized scalar curvature of some pair $(g',A')$ 
    if and only if it is either $\equiv 0$ or negative somewhere on $M$;
  \item all other manifolds, those not admitting any $(g,A)$ with
    $\Rgen_{(g,A)}\ge 0$ --- in this case, a smooth function on $M$ 
    is the generalized scalar curvature of some pair $(g',A')$ if and only if
    it is negative somewhere on $M$.
  \end{enumerate}
\end{theorem}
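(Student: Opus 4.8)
The plan is to prove this by transcribing Kazdan-Warner's proof of Theorem~\ref{thm:KW-int}, with the generalized scalar curvature $\Rgen_{(g,A)}$ playing the role that $R_g$ plays there. The one new geometric ingredient is a conformal transformation law: for a \emph{fixed} connection $A$, the function $\Rgen_{(g,A)}$ transforms under a conformal change of $g$ exactly as ordinary scalar curvature does. Precisely, with $c_n=\tfrac{4(n-1)}{n-2}$, $\tilde g=u^{4/(n-2)}g$, and $u\in C^\infty(M)$ positive, one has
\[
\Rgen_{(\tilde g,A)}\;=\;u^{-\frac{n+2}{n-2}}\bigl(c_n\,\Delta_g u+\Rgen_{(g,A)}\,u\bigr),
\]
where $\Delta_g$ denotes the nonnegative Laplacian. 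I would obtain this by combining the classical conformal change of $R_g$ with the fact that the operator norm $|\Omega|_{op}$ scales like the squared length of a covector: under the canonical \emph{isometric} identification of the spin$^c$ spinor bundles of $g$ and of $\tilde g=e^{2\varphi}g$ (the bundle $L$, its metric $h$, and the connection $A$ being untouched), Clifford multiplication by the fixed curvature form $\Omega$ satisfies $c_{\tilde g}(\Omega)=e^{-2\varphi}c_g(\Omega)$, so that $|\Omega|_{op}$ computed with $\tilde g$ equals $u^{-4/(n-2)}$ times $|\Omega|_{op}$ computed with $g$; as $u^{-4/(n-2)}=u\cdot u^{-(n+2)/(n-2)}$, the two conformal factors combine into the displayed formula.

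The payoff is that realizing a function $f$ as $\Rgen_{(g',A)}$ with $g'$ conformal to a given metric $g$ is \emph{exactly} the problem of finding a positive solution of $c_n\Delta_g u+\Rgen_{(g,A)}u=f\,u^{(n+2)/(n-2)}$, that is, the Kazdan-Warner prescribed-scalar-curvature equation with $\Rgen_{(g,A)}$ in place of $R_g$. Thus all of Kazdan-Warner's conformal solvability criteria apply verbatim, with the sign of the first eigenvalue $\lambda_1\bigl(c_n\Delta_g+\Rgen_{(g,A)}\bigr)$ of the ``generalized conformal Laplacian'' in the role played there by the sign of the Yamabe invariant of $[g]$. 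Correspondingly, $(M,L)$ lies in class~(1) if and only if some pair $(g,A)$ has $\lambda_1>0$ (realize the constant $1$ conformally), and in class~(3) if and only if every pair has $\lambda_1<0$ (otherwise one produces a pair with $\Rgen\ge0$); class~(2) is the remaining case, in which no pair has $\lambda_1>0$ while some pair satisfies $\Rgen_{(g,A)}\equiv0$ (equivalently $\lambda_1=0$).

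The trichotomy then reduces to three bookkeeping steps. \textbf{(i)} Every $f$ that is negative somewhere is realized: a closed manifold of dimension $\ge3$ carries a metric $g_-$ with $R_{g_-}<0$ everywhere, so for any connection $A$ one has $\Rgen_{(g_-,A)}=R_{g_-}-2|\Omega|_{op}<0$, hence $\lambda_1<0$, and Kazdan-Warner's negative-case solvability (applied to $c_n\Delta_{g_-}+\Rgen_{(g_-,A)}$) yields $f=\Rgen_{(g',A)}$ for a suitable $g'\in[g_-]$. \textbf{(ii)} The function $f\equiv0$ is realized if and only if $(M,L)$ admits a pair with $\Rgen_{(g,A)}\ge0$, i.e.\ iff $(M,L)$ lies in class~(1) or~(2): one direction is immediate, and for the other it suffices to treat class~(1); fixing $A_0$ with $\Rgen_{(g_0,A_0)}>0$, the function $g\mapsto\lambda_1\bigl(c_n\Delta_g+\Rgen_{(g,A_0)}\bigr)$ is continuous on the convex, hence connected, space of metrics, is positive at $g_0$ and negative at $g_-$, so by the intermediate value theorem some $g_1$ has $\lambda_1=0$, and its positive first eigenfunction $\phi$ then makes $\Rgen_{(\phi^{4/(n-2)}g_1,\,A_0)}\equiv0$ by the transformation law. \textbf{(iii)} Some $f\ge0$ with $f\not\equiv0$ is realized if and only if $(M,L)$ lies in class~(1): if $\Rgen_{(g,A)}=f\ge0$ with $f>0$ somewhere, the Rayleigh quotient of $c_n\Delta_g+f$ is positive on every nonzero function, so $\lambda_1>0$ and $(M,L)$ is in class~(1); conversely, if $(M,L)$ is in class~(1) then \emph{every} smooth function on $M$ is realized, by running the class-(1) part of Kazdan-Warner's proof of Theorem~\ref{thm:KW-int} with $\Rgen_{(g_0,A_0)}$ in place of $R_{g_0}$ --- functions negative somewhere come from (i), the function $0$ from (ii), and functions positive somewhere from Kazdan-Warner's positive-case conformal deformation applied to $c_n\Delta_{g_0}+\Rgen_{(g_0,A_0)}$, the only subtlety being the round conformal class on $S^n$, which one avoids by a small perturbation of $(g_0,A_0)$ preserving $\Rgen>0$, exactly as in the proof of Theorem~\ref{thm:KW-int}. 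As classes (1)--(3) partition all closed spin$^c$ manifolds of dimension $\ge3$, combining (i)--(iii) gives the stated characterizations.

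I expect the conformal transformation law to be the main obstacle --- in particular, verifying that $|\Omega|_{op}$ rescales by exactly $e^{-2\varphi}$, which requires care with the conformal identification of spin$^c$ spinor bundles (that it is an isometry for the spinor Hermitian metrics, so that Clifford multiplication by a $2$-form rescales by the square of the factor for covectors); everything downstream is then a faithful transcription of Kazdan-Warner's arguments. A secondary point, which is what actually pins down the class-(2)/class-(3) boundary and hence the role of $0$ in the trichotomy, is the intermediate-value argument of step~(ii), needing only continuity of $\lambda_1\bigl(c_n\Delta_g+\Rgen_{(g,A)}\bigr)$ in $g$ together with the classical existence of metrics of everywhere-negative scalar curvature. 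Finally, there is a routine regularity wrinkle: since $|\Omega|_{op}$ is the pointwise maximum of the moduli of the eigenvalues of a smooth Hermitian endomorphism field, it is in general only Lipschitz, so $\Rgen_{(g,A)}$ is merely Hölder continuous and the deformation arguments a priori produce only a $C^{2,\alpha}$ metric realizing $f$; a smooth pair $(g',A')$ with $\Rgen_{(g',A')}=f$ is then obtained by a standard smoothing argument, or, where convenient, by first perturbing $A$ so that $c(\Omega)$ has a simple largest eigenvalue and $|\Omega|_{op}$ is smooth.
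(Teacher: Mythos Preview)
Your approach is essentially the paper's: the conformal transformation law you display is precisely the content of Lemma~\ref{lem:confinv2}, and your three-step bookkeeping mirrors Theorem~\ref{ref:neggsc} together with the closing paragraph of Section~\ref{sec:trichotomy}. One imprecision worth fixing: in step~(i) you assert $g'\in[g_-]$, but Kazdan--Warner's purely conformal result (their Theorem~2.11(b), restated in the paper as Proposition~\ref{propK-W2.11}) needs the target $f<0$ \emph{everywhere}; when $f$ is merely negative somewhere one must first pull it back by a diffeomorphism $\phi$ chosen so that $\int_M\phi^*(f)\,d\mathrm{vol}_{g_-}<0$, and the realizing metric is then conformal to $(\phi^{-1})^*g_-$, not to $g_-$ itself --- this is exactly how both the paper and Kazdan--Warner proceed, so your later ``faithful transcription'' remark is right but the explicit claim $g'\in[g_-]$ is not. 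For the nonnegative case in class~(1), the paper takes a slightly cleaner route than your perturbation-away-from-the-round-sphere idea: it first arranges $\Rgen_{(\check g,A_0)}$ to be a positive \emph{constant} (solve Yamabe in the conformal class, then replace $A$ by the connection with $\check g$-harmonic curvature, as at the end of Section~\ref{sec:genpsc}), after which Kazdan--Warner's Lemma~6.1 applies directly without any sphere caveat. Finally, your regularity observation --- that $|\Omega|_{op}$ is generically only Lipschitz, so the zeroth-order coefficient in $\sL_{(g,A)}$ need not be smooth --- is a legitimate wrinkle the paper does not address; your proposed fixes are reasonable.
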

Recall that the conformal Laplacian
$\sL_{g}=-\frac{4(n-1)}{n-2}\Delta_g + R_g$ and its lowest
eigenvalue $\mu_1(\sL_{g})$ played a major role in proving Theorem
\ref{thm:KW-int}, where the sign of $\mu_1(\sL_{g})$ (which is 
invariant under a conformal change of the metric $g$) points to the
corresponding class (1), (2) or (3).\footnote{We use the same sign
  convention as Kazdan and Warner, that $\Delta_g$ is a nonpositive
  operator.  This explains the initial minus sign in the formula
for $\sL_{g}$.} Similarly, the generalized
scalar curvature $\Rgen_{(g,A)}$ is closely related to the principal
eigenvalue $\mu_1(\sL_{(g,A)})$ of the \emph{generalized conformal
Laplacian}
\begin{equation*}
{\sL_{(g,A)}}=-\frac{4(n-1)}{n-2}\Delta_g +
{\Rgen_{(g,A)}}=-\frac{4(n-1)}{n-2}\Delta_g + R_g -
2|\Omega|_{op}
= \sL_g - 2|\Omega|_{op}.
\end{equation*}  
Here again, the sign of $\mu_1(\sL_{(g,A)})$ is invariant under a
conformal change of the metric $g$. The condition $\mu_1(\sL_{(g,A)})>0$
corresponds to the case (1): then there exists a pair $(g',A)$ (where
$g'$ is conformal to $g$) such that $\Rgen_{(g',A)}>0$. In turn, this
means that $\alpha^c(M,L)=0$ as we have seen above. Thus if
$\alpha^c(M,L)\ne 0$, there are two possibilities: either there is a
pair $(g,A)$ with $\mu_1(\sL_{(g,A)})=0$, or else $\mu_1(\sL_{(g,A)})<0$ for any
pair $(g,A)$.

The case when $\mu_1(\sL_{(g,A)})=0$ (corresponding to the case (2)) is
especially interesting: just as in the classic case, we prove a
rigidity result as follows:
\begin{theorem}
\label{thm:specialhol-int}
Let $(M,L)$ be a closed connected spin$^c$ manifold which is non-spin, and
assume that $\alpha^c(M,L)\ne 0$. Suppose there are a metric $g$ on $M$ and a
connection $A$ on $L$ for which $\mu_1(\sL_{(g,A)})=0$. Then $(M,g)$ is
pointwise conformal to another metric $g_1$ for which
$\Rgen_{(g_1,A)}$ vanishes identically. Furthermore, $(M,L)$ with metric
$g_1$ admits a parallel spinor, and $(M,g_1)$ is a Riemannian product
of a K\"ahler manifold and a simply connected spin manifold with a
parallel spinor. In particular,
if $M$ does not split as a product, then $M$ is conformally K\"ahler,
and $L$ is either the canonical or the anti-canonical line bundle on $M$.
\end{theorem}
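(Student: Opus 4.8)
The plan is to run the Kazdan--Warner/Bourguignon argument with the generalized conformal Laplacian $\sL_{(g,A)}$ in place of the conformal Laplacian, and then to feed the resulting parallel spinor into the classification of spin$^c$ manifolds with parallel spinors. First I would produce $g_1$. Since $\mu_1(\sL_{(g,A)})=0$ is the principal eigenvalue of the Schr\"odinger-type operator $\sL_{(g,A)}=-\tfrac{4(n-1)}{n-2}\Delta_g+\Rgen_{(g,A)}$, the Krein--Rutman theorem (as in the proof of the trichotomy theorem, Theorem~\ref{thm:trichotomy-int}) gives a positive function $\phi$ on $M$ with $\sL_{(g,A)}\phi=0$; set $g_1:=\phi^{4/(n-2)}g$. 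The zeroth order term $-2|\Omega|_{op}$ has conformal weight $-2$: the curvature $2$-form $\Omega$ of $A$ is independent of the metric, while Clifford multiplication by a $g$-unit covector has operator norm $1$, so $|\Omega|^{g_1}_{op}=\phi^{-4/(n-2)}|\Omega|^{g}_{op}$. Hence $\sL_{(g,A)}$ obeys the usual conformal transformation law $\sL_{(g_1,A)}(v)=\phi^{-(n+2)/(n-2)}\sL_{(g,A)}(\phi v)$, and taking $v\equiv 1$ yields $\Rgen_{(g_1,A)}=\sL_{(g_1,A)}(1)\equiv 0$.

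Next I would produce a parallel spinor for $(M,L,g_1,A)$. Since $\alpha^c(M,L)=\ind D_L\neq 0$ in $K_n(\pt)$, the dimension $n$ is even and $\ker D_L\neq 0$; fix $0\neq\psi\in\ker D_L$. Because $\Rgen_{(g,A)}$ is pointwise the least eigenvalue of the self-adjoint endomorphism $\Rtw_{(g,A)}$ (the pointwise statement underlying Lemma~\ref{lem:gpsccond}), the vanishing $\Rgen_{(g_1,A)}\equiv 0$ shows $\Rtw_{(g_1,A)}\ge 0$ pointwise. Pairing the Lichnerowicz--Schr\"odinger identity $D_L^2=\nabla^*\nabla+\tfrac14\Rtw_{(g_1,A)}$ with $\psi$ and integrating over $M$ gives
\[
0=\|\nabla\psi\|_{L^2}^2+\tfrac14\int_M\langle\Rtw_{(g_1,A)}\psi,\psi\rangle\,d\vol ,
\]
a sum of two nonnegative terms; hence $\nabla\psi\equiv 0$ (and, as a byproduct, $\Rtw_{(g_1,A)}\psi\equiv 0$ and $R_{g_1}=2|\Omega|^{g_1}_{op}\ge 0$, although only the parallelism of $\psi$ is needed below).

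Finally I would invoke Moroianu's classification: a complete simply connected spin$^c$ manifold carrying a parallel spinor is a Riemannian product of a K\"ahler manifold with its canonical (or anti-canonical) spin$^c$ structure and a simply connected spin manifold with a parallel spinor, the auxiliary line bundle restricting to $K^{\pm1}$ on the K\"ahler factor, and the spin factor being Ricci-flat with holonomy in $\SU$, $\Sp$, $G_2$ or $\Spin(7)$. Applying this to the universal cover of $(M,g_1)$, to which $\psi$ lifts as a parallel spinor, and pushing the canonical splitting back down along $\pi_1(M)$ yields the asserted decomposition $(M,g_1)\cong M_1\times M_2$ with $M_1$ K\"ahler and $M_2$ simply connected spin with a parallel spinor. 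If $M$ were spin, then $c_1(L)\equiv w_2(M)=0$ would be even, so $L$ would be a square; on the product this forces $K_{M_1}^{\pm1}$ to be a square, hence $M_1$ spin. Since $M$ is non-spin while $M_2$ is spin, $M_1$ must be non-spin, in particular not a point; so if $(M,g_1)$ admits no nontrivial Riemannian product decomposition then $M_2$ is a point, $g_1$ is K\"ahler, and $L\cong K_M^{\pm1}$. As $g$ is pointwise conformal to $g_1$, $M$ is then conformally K\"ahler.

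I expect the genuine difficulty to lie in this last step: extracting the exact form of Moroianu's theorem that is needed, carrying the spin$^c$ data (including the canonical versus anti-canonical ambiguity) through the product decomposition, and verifying that the splitting descends from the universal cover with the spin factor remaining simply connected. The conformal change and the Lichnerowicz argument are routine given the results already established in the paper.
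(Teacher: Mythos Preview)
Your proposal is correct and follows essentially the same route as the paper: conformally change the metric using a positive first eigenfunction of $\sL_{(g,A)}$ to achieve $\Rgen\equiv 0$, apply the Lichnerowicz--Schr\"odinger formula to a harmonic spinor (which exists since $\alpha^c(M,L)\ne 0$) to get a parallel spinor, and then invoke Moroianu's classification. The paper's proof is terser---it simply refers back to the conformal computation of Lemma~\ref{lem:confinv2} and cites Moroianu for the last step---whereas you spell out the conformal weight of $|\Omega|_{op}$, the evenness of $n$, and the argument that the K\"ahler factor must be nontrivial because $M$ is non-spin; you also flag the descent from the universal cover, which the paper sidesteps by stating the Section~\ref{sec:rigidity} version of the theorem only for simply connected $M$.
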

We also observe that there is a partial converse to Theorem
\ref{thm:specialhol-int} that applies to K\"ahler-Einstein Fano varieties.
A very similar formulation appears in \cite{MR1876863}.
\begin{theorem}{\rm ({cf.\ }Goette-Semmelmann,
    {\cite[Theorem 1.9]{MR1876863}})}
\label{thm:specialhol1-int}
Let $(M,g)$ be a closed connected K\"ahler-Einstein manifold with positive
Ricci curvature.  Let $L$ be the anti-canonical line bundle on $M$.
Then $(M,L)$ does not admit a connection $A$ on $L$ with
$\Rgen_{(g,A)}>0$, but with the connection $A$ on $L$ with harmonic
curvature {\lp}necessarily a multiple of $g${\rp}, $\Rgen_{(g,A)}\equiv 0$.
\end{theorem}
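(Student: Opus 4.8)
The plan is to prove Theorem~\ref{thm:specialhol1-int} as a direct application of the rigidity result Theorem~\ref{thm:specialhol-int}, together with an explicit computation of the curvature data on a Kähler-Einstein Fano manifold. First I would recall that on a Kähler-Einstein manifold $(M,g)$ with positive Ricci curvature, normalized so that $\operatorname{Ric}(g) = \lambda g$ with $\lambda>0$, the anti-canonical bundle $L = K_M^{-1} = \Lambda^{n,0}TM$ carries a natural Hermitian metric induced by $g$, and the Chern connection $A$ of this metric has curvature $2$-form $\Omega = -i\,\rho$, where $\rho$ is the Ricci form; by the Einstein condition $\rho = \lambda\,\omega$ is a constant multiple of the Kähler form $\omega$, so $\Omega$ is harmonic (indeed parallel). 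One then computes the pointwise operator norm $|\Omega|_{op}$ of Clifford multiplication by this $\Omega$ acting on spinors of the spin$^c$ bundle $\Lambda^{0,*}TM$: since $\omega$ acts on the decomposition by bidegree with eigenvalues controlled by $n$, and the top/bottom components $\Lambda^{0,0}$, $\Lambda^{0,n}$ (which carry the canonical spinor) realize the extreme eigenvalue, the norm works out so that $R_g - 2|\Omega|_{op} = 0$ identically. This gives $\Rgen_{(g,A)}\equiv 0$ for this choice of $A$.

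Next I would argue that no connection $A'$ on $L$ can achieve $\Rgen_{(g,A')} > 0$ while keeping the metric $g$ fixed. Here the key point is that changing the connection on $L$ changes $\Omega$ only within its de~Rham class by an exact form $d\eta$; since the metric $g$ (hence $R_g$) is fixed, and since among all forms in a fixed cohomology class the harmonic representative minimizes the $L^2$-norm, one expects the harmonic $\Omega$ to also be, in an appropriate pointwise sense, the "smallest"—but pointwise minimization is subtler than $L^2$ minimization. The cleaner route is to invoke Theorem~\ref{thm:specialhol-int} in contrapositive form: if some $(g,A')$ had $\Rgen_{(g,A')}>0$, then by Theorem~\ref{thm:trichotomy-int}(1) (equivalently Lemma~\ref{lem:gpsccond} plus Theorem~\ref{them:spincpsccondition}) we would get $\alpha^c(M,L) = 0$. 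But for a Kähler-Einstein Fano manifold with $L = K_M^{-1}$, the index $\alpha^c(M,L) = \ind D_L$ computes (via the spin$^c$ index theorem, i.e.\ Riemann-Roch) the holomorphic Euler characteristic $\chi(M,\mathcal{O}_M) = 1 \ne 0$ in the appropriate degree—so $\alpha^c(M,L)\ne 0$, a contradiction. Hence no such $A'$ exists, and in fact $\mu_1(\sL_{(g,A)}) \le 0$ for every $A$; combined with the explicit $A$ giving $\Rgen_{(g,A)}\equiv 0$, the harmonic connection lands exactly in case~(2).

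The main obstacle I anticipate is the explicit pointwise norm computation showing $2|\Omega|_{op} = R_g$ for the harmonic $\Omega$: one must carefully match the normalization of the Einstein constant $\lambda$, the relation $R_g = 2n\lambda$ (real dimension $2n$), the identification $\Omega = -i\rho = -i\lambda\omega$, and the operator norm of $c(\lambda\omega)$ on the spin$^c$ spinor bundle. The action of $c(\omega)$ on $\Lambda^{0,*}$ has eigenvalues $i(2p-n)$ on $\Lambda^{0,p}$ (up to sign/normalization conventions), so its operator norm is $|n|$ times a universal constant, and one needs the bookkeeping to yield precisely $2|\Omega|_{op} = 2\lambda n = R_g$. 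I would isolate this as a lemma, quoting the Clifford-module structure of $\Lambda^{0,*}$ as in \cite{lawson89:_spin}, and then the rest of the argument is the short logical chain above. It is worth noting that the harmonicity of $\Omega$ (being parallel here) is what lets us identify this connection as the canonical "balanced" choice, matching the parenthetical claim that harmonic curvature is necessarily a multiple of $g$, which follows from the Einstein condition $\rho = \lambda\omega$.
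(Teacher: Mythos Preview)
Your proposal is correct and follows essentially the same approach as the paper's proof: compute $\alpha^c(M,L)$ as the Todd genus (equivalently $\chi(M,\mathcal O_M)=1$ via Kodaira vanishing on a Fano) to rule out $\Rgen>0$, and then verify $2|\Omega|_{op}=R_g$ for the Chern connection on $K_M^{-1}$ by the Einstein condition $\rho=\lambda\omega$ and the eigenvalue computation of $c(\omega)$ on $\Lambda^{0,*}$. One minor over-citation: you do not need Theorem~\ref{thm:specialhol-int} or the trichotomy theorem for the first step---the implication $\Rgen_{(g,A')}>0\Rightarrow\alpha^c(M,L)=0$ is immediate from the Lichnerowicz--Schr\"odinger formula \eqref{Lich-formula} together with Lemma~\ref{lem:gpsccond}, exactly as the paper uses it.
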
 
Next, we consider the space $\cR^{\gen}(M,L)$ of triples $(g,h,A)$ such
that $\Rgen_{(g,A)}>0$. Note that the space $\cR^{\gen}(M,L)$ coincides
with the corresponding space $\cR^{\tw}(M,L)$ of triples $(g,h,A)$ such
that $\Rtw_{(g,A)}>0$.

In fact there is no loss of generality in fixing $h$ once and for all
(see Proposition \ref{prop:dontneedh} below), but it would seem at
first that it is important to consider variations not just in the
Riemannian metric $g$, which determines the scalar curvature term
$R_g$, but also in the connection $A$, which determines the extra
curvature term $\cR_L$.  We will show, however, that this is not
really necessary, and that once one has $(g,h,A)\in \cR^{\gen}(M,L)$,
one can change the connection $A$ to a more canonical choice, only
depending on the metric $g$, for which the positivity condition is
still satisfied.

To study the topology of $\cR^{\gen}(M,L)$,
we will use a version of the ``index difference'' homomorphism,
\begin{equation*}
\inddiff(M)\co \pi_q(\cR^+(M))\to KO_{q+n+1}
\end{equation*}
(where $M$ is a closed spin manifold, $\dim M=n\geq 5$, and $\cR^+(M)$ is the
space of psc metrics on $M$) studied in
\cite{MR3683115,MR3681394,MR3956897}. Loosely, it follows from
\cite{MR3681394} that the index difference homomorphism is non-trivial
if the target group $KO_{q+n+1}$ is non-zero.

Similarly, if $(M,L)$ is a closed spin$^c$ manifold with a fixed choice of
basepoint $(g_0,A_0)\in \cR^{\gen}(M,L)$, we define the spin$^c$ index
difference homomorphism
\begin{equation}\label{ind-diff-int}
\inddiff^c(M,L)\co \pi_q(\cR^{\gen}(M,L))\to KU_{q+n+1}\,.
\end{equation}
We would like to address the following
\begin{conjecture}\label{BR-conj}
Let $(M,L)$ be a closed spin$^c$ non-spin manifold
{with $\dim M\geq 5$, and with} 
$\cR^{\gen}(M,L)\neq \emptyset$.  Then the index difference
homomorphism (\ref{ind-diff-int}) is non-trivial if the target group
is non-zero.
\end{conjecture}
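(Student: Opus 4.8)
We expect Conjecture~\ref{BR-conj} to be provable by the following program, which adapts the proof of non-triviality of the classical (spin) index difference, due to Botvinnik--Ebert--Randal-Williams \cite{MR3681394,MR3683115,MR3956897}, to the spin$^c$ setting, systematically replacing $\KO$-theory by $\KU$-theory, the $\alpha$-invariant by $\alpha^c$, the spin cobordism category by its spin$^c$ analogue, and the Atiyah--Bott--Shapiro orientation $\MSpin\to\KO$ by the complex one $\MSpinc\to\KU$. As in the spin case, the proof should have four ingredients: (a) a families version of the spin$^c$ surgery theorem, giving homotopy invariance of $\cR^{\gen}(M,L)$ under codimension $\ge 3$ spin$^c$ surgery; (b) a group-completion/scanning model for the stabilized space $\cR^{\gen}$ as an infinite loop space; (c) an additivity (parametrized index) theorem identifying a universal spin$^c$ index difference with a map of infinite loop spaces into $\Omega^{\infty+1}\KU$; and (d) the algebraic fact that this last map is surjective on homotopy groups. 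Steps (a)--(d) together would give surjectivity of $\inddiff^c$ onto $\KU_{q+n+1}$ after stabilizing $(M,L)$, and then (a) propagates the conclusion back to the given pair.

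\emph{Families surgery.} One first upgrades Theorem~\ref{them:spincsurgery} to the statement that a spin$^c$ surgery in codimension $\ge 3$ on $(M,L)$ induces a weak homotopy equivalence $\cR^{\gen}(M,L)\simeq\cR^{\gen}(M',L')$ --- the spin$^c$ counterpart of the Chernysh--Walsh--Ebert--Frenck improvement of Gromov--Lawson. Concretely, the Gromov--Lawson deformation used to prove Theorem~\ref{them:spincsurgery} must be shown to depend continuously (indeed, to be ``local'' in the sense of Ebert--Frenck) on the triple $(g,h,A)$, and the trace of the surgery must furnish an explicit homotopy inverse. The passage from the operator inequality $\Rtw_{(g,A)}>0$ to the \emph{scalar} inequality $\Rgen_{(g,A)}>0$ from Lemma~\ref{lem:gpsccond} is helpful here: the term $2|\Omega|_{op}$ is a genuine function, so the curvature conditions one must preserve along the deformation are scalar estimates, exactly as in the classical psc case with $R_g$ replaced by $R_g-2|\Omega|_{op}$.

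\emph{Stable model and parametrized index.} Using (a), one replaces $(M,L)$ by $(M\#N,L')$ for a suitable spin manifold $N$ --- for instance $N=\#^k(S^{n/2}\times S^{n/2})$ with $L$ extended trivially, or copies of $\bC\bP^{n/2}$ --- chosen so that $M\#N$ is still non-spin, $\alpha^c$ still vanishes, and hence $\cR^{\gen}(M\#N,L')\ne\emptyset$. One then identifies the colimit of these stabilizations (or a surface-type stabilization as in \cite{MR3956897}) with $\Omega^{\infty+1}$ of a Thom spectrum attached to the spin$^c$ tangential structure, via the spin$^c$ Galatius--Madsen--Tillmann--Weiss cobordism category; the tangential-structure formalism makes this step essentially formal. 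One then defines a universal spin$^c$ index difference on this model and proves it additive under gluing of bordisms, so that it factors through a map of infinite loop spaces to $\Omega^{\infty+1}\KU$ induced by the orientation $\MSpinc\to\KU$. Finally, since $\Omega^{\spinc}_*$ is torsion-free and concentrated in even degrees (and $\MSpinc$ splits $2$-locally as a wedge of even suspensions of connective covers of $\ku$, with no Eilenberg--MacLane summands), the orientation $\MSpinc\to\KU$ is split surjective on homotopy groups; hence the universal spin$^c$ index difference is onto $\KU_{q+n+1}$, and tracing back through (a) shows $\inddiff^c(M,L)$ is onto $\KU_{q+n+1}$, which is non-zero exactly when $q+n$ is odd. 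A delooping argument as in \cite{MR3681394}, together with the hypothesis $\dim M\ge 5$, removes the need for $q$ to be large.

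\emph{Main obstacle.} The crux is carrying out (b) and, above all, (c) in the spin$^c$ category. While the cobordism-category identification for the spin$^c$ tangential structure is routine, the proof in \cite{MR3956897} that the families surgery theorem is compatible with the stable moduli space machinery, and that the index-difference additivity holds with $\KU$ in place of $\KO$, must be reworked with attention to the extra curvature term $\cR_L$: one needs the parametrized index theorem for the family of spin$^c$ Dirac operators $D_L$ with \emph{varying} connection, and must check that the relevant boundary conditions and gluing formulas go through verbatim. A second, purely spin$^c$-theoretic point requiring care throughout is the bookkeeping of the line bundle under connected sum and surgery: one must arrange the stabilizing manifolds and the extension of $L$ so that the stabilized pair remains non-spin with vanishing $\alpha^c$ (so that $\cR^{\gen}$ stays non-empty and the surgery results of (a) remain applicable), which constrains the admissible choices of $N$ and of $c_1(L')$.
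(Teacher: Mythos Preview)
The statement you are attempting to prove is labeled a \emph{conjecture} in the paper, and the paper does \emph{not} prove it. The authors explicitly say they ``prove a couple of easy results providing evidence for Conjecture~\ref{BR-conj}'' (Theorem~\ref{thm:psc-space}), namely: (i) for $n=4k-1\ge 7$, $\inddiff^c$ is nontrivial on $\pi_0$ (Proposition~\ref{prop:manycomponents}, proved by attaching copies of a highly connected spin manifold $N$ with $\widehat A(N)\ne 0$ along a cylinder bordism); and (ii) there \emph{exists} one example, $\bC\bP^2\times S^7$, with infinitely many nontrivial homotopy groups (Corollary~\ref{cor:manpiq}, obtained by feeding the known spin result for $S^7$ through the product map~\eqref{eq:sigma}). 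So there is no ``paper's own proof'' of the conjecture to compare your proposal against.

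Your proposal is not a proof but a program, and you are candid about that (``We expect\dots to be provable by the following program'', ``Main obstacle''). As a program it is exactly the one the authors themselves endorse in the remark immediately following the conjecture: ``It is very likely that the techniques from \cite{MR3683115,MR3681394,MR3956897} could be (and will be) generalized to the spin$^c$ case.'' Your outline of ingredients (a)--(d) is the natural spin$^c$ translation of the Botvinnik--Ebert--Randal-Williams machinery, and the obstacles you flag --- the parametrized index/additivity theorem for the spin$^c$ Dirac operator with varying connection $A$, and the bookkeeping of $L$ under stabilization --- are the right ones. One further point you should flag: the conjecture as stated has no simple-connectivity hypothesis, so your surgery step (a) must handle the fundamental group (as in the spin case, where BERW works for arbitrary $\pi_1$ via $2$-connected maps to $B\pi_1$); and the dimension bound in \cite{MR3681394} is $\ge 6$, so the case $\dim M=5$ may need separate treatment. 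But none of this changes the basic assessment: you have written a plausible roadmap for an open problem, consistent with what the authors themselves anticipate, rather than a proof.
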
  
\begin{remark}
In fact there is an index-difference map
$\cR^{\gen}(M,L)\to \Omega^{\infty+n}\mathbf{KU}$ (we use the notation from
\cite{MR3683115}). It is very likely that the techniques from
\cite{MR3683115,MR3681394,MR3956897} could be (and will be)
generalized to the spin$^c$ case.
\end{remark}
In this paper, we prove a couple of easy results providing evidence
for Conjecture \ref{BR-conj} to be true:
\begin{theorem}\label{thm:psc-space}$\mbox{ \ }$
  \begin{enumerate}
\item[{\rm (i)}] Let $(M,L)$ be a closed connected $n$-dimensional
  spin$^c$ manifold, with $n=4k-1\geq 7$, and $\cR^{\gen}(M,L)\neq
\emptyset$. Then $\inddiff^c(M,L)\co
  \pi_0(\cR^{\gen}(M,L))\to \bZ$ is nonzero, and in {fact},
  $\cR^{\gen}(M,L)$ has infinitely many connected components.
\item[{\rm (ii)}] There exists a closed simply connected spin$^c$ manifold
  $(M,L)$, not spin, for which infinitely many homotopy groups of
  $\cR^{\gen}(M,L)$ are non-zero.  
  \end{enumerate}  
\end{theorem}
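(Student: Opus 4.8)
The plan for part~(i) is to transplant Carr's classical construction of infinitely many concordance classes of positive scalar curvature metrics on a sphere into the spin$^c$ setting, using that the line bundle and connection can be kept trivial near the region that is modified. Fix a basepoint $(g_0,A_0)\in\cR^{\gen}(M,L)$ and an embedded disc $D^n\subset M$. For $n=4k-1\ge 7$ there is, by the classical construction of Carr (see \cite{MR3683115} and the references there), a sequence of psc metrics $h_j$ on $S^n$, $j\in\bN$, whose index differences in $KO_{n+1}=KO_{4k}\cong\bZ$ are pairwise distinct; giving $S^n$ the trivial line bundle and a flat connection we have $(h_j,\mathrm{flat})\in\cR^{\gen}(S^n,\mathrm{triv})$, since then $\Rgen=R_{h_j}>0$, and likewise $\Rtw>0$. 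Forming the Gromov--Lawson connected sum of $(M,L,g_0,A_0)$ with $(S^n,\mathrm{triv},h_j,\mathrm{flat})$ along $D^n$ is a spin$^c$ surgery in codimension $n\ge 3$ on the disjoint union $(M,L)\sqcup(S^n,\mathrm{triv})$, whose summands each carry a pair with $\Rtw>0$; so by Theorem~\ref{them:spincsurgery} (and the equivalence $\Rtw>0\Leftrightarrow\Rgen>0$ of Lemma~\ref{lem:gpsccond}) it yields a pair $(g_j,A_j)\in\cR^{\gen}(M,L)$ (using $M\#S^n\cong M$, the line bundle being unchanged). Since $(g_j,A_j)$ agrees with $(g_0,A_0)$ away from $D^n$, where $L$ is trivial so that the spin$^c$ Dirac operator is there the complexification of a spin Dirac operator, the excision/additivity property of the index difference gives $\inddiff^c(M,L)([g_j,A_j])=\iota\bigl(\inddiff(S^n)([h_j])\bigr)\in KU_{4k}$, where $\iota\co KO_*\to KU_*$ is complexification. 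Because $\iota\co KO_{4k}\to KU_{4k}$ is injective (multiplication by $1$ or $2$), these classes are pairwise distinct; hence $\inddiff^c(M,L)\co\pi_0(\cR^{\gen}(M,L))\to\bZ$ is non-zero and $\cR^{\gen}(M,L)$ has infinitely many path components.

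For part~(ii) the plan is to realize $(M,L)$ as a Riemannian product of a sphere with a fixed non-spin spin$^c$ factor of non-zero index, and to pull across the known rich topology of the space of psc metrics on the sphere. Take $M'=S^6$ with its round metric $g_0'$, and $N=\bC\bP^2$ with the Fubini--Study metric $g_N$, anti-canonical line bundle $L_N=\cO(3)$ and Chern connection $A_N$: then $\bC\bP^2$ is spin$^c$ but not spin ($w_2\ne0$), $\alpha^c(\bC\bP^2,L_N)=\chi(\bC\bP^2,\cO)=1\in KU_0\cong\bZ$, and by Theorem~\ref{thm:specialhol1-int} the curvature of $A_N$ is harmonic with $\Rgen_{(g_N,A_N)}\equiv 0$. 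Set $M:=S^6\times\bC\bP^2$ and $L:=\pr_{\bC\bP^2}^{*}L_N$, a closed, simply connected, spin$^c$, non-spin $10$-manifold. For $g'\in\cR^+(S^6)$ the product $g_N+g'$ with the pulled-back connection satisfies $\Rgen_{(g_N+g',A_N)}=\Rgen_{(g_N,A_N)}+R_{g'}=R_{g'}>0$ (Clifford multiplication by the pulled-back curvature has, at each point, the same operator norm on $M$ as on $\bC\bP^2$, and $R_{g_N+g'}=R_{g_N}+R_{g'}$); so $P\co\cR^+(S^6)\to\cR^{\gen}(M,L)$, $P(g')=(g_N+g',A_N)$, is continuous, and $\cR^{\gen}(M,L)\ne\emptyset$. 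Basing $\inddiff^c(M,L)$ at $P(g_0')$, and using that the relevant spin$^c$ Dirac operator over $M\times\bR$ is an external product $D^{\bC\bP^2}_{L_N}\widehat{\otimes}D^{S^6\times\bR}$ --- the first factor the Dirac operator of the closed spin$^c$ manifold $\bC\bP^2$, the second invertible at $\pm\infty$ --- multiplicativity of the index gives, for $\gamma\in\pi_q(\cR^+(S^6))$,
\[
\inddiff^c(M,L)(P_*\gamma)=\alpha^c(\bC\bP^2,L_N)\cdot\iota\bigl(\inddiff(S^6)(\gamma)\bigr)=\beta^2\cdot\iota\bigl(\inddiff(S^6)(\gamma)\bigr)\in KU_{q+11},
\]
with $\beta\in KU_2$ the Bott element. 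By \cite{MR3681394}, $\inddiff(S^6)\co\pi_q(\cR^+(S^6))\to KO_{q+7}$ is non-trivial whenever $KO_{q+7}\ne0$, hence hits a non-torsion class whenever $KO_{q+7}\cong\bZ$, i.e.\ for every $q\equiv1\pmod4$. Since $\beta^2$ is a unit and $\iota$ is injective on the torsion-free part of $KO_*$, we get $\inddiff^c(M,L)(P_*\gamma)\ne0$ for such $\gamma$, so $\pi_q(\cR^{\gen}(M,L))\ne0$ for infinitely many $q$.

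The step I expect to require the most care, in both parts, is the behaviour of the spin$^c$ index difference under the two geometric operations used: excision/additivity under the connected sum in~(i), and multiplicativity under Riemannian products in~(ii). In the spin case these properties are standard --- they underlie Carr's theorem and the work of Botvinnik--Ebert--Randal-Williams \cite{MR3683115,MR3681394,MR3956897}. In the spin$^c$ case the cleanest route is to work directly with relative indices of the spin$^c$ Dirac operator on $M\times\bR$, which is Fredholm because $\Rgen>0$ at the ends: near the connected-sum region the line bundle is trivial, so the operator is locally a spin Dirac operator and excision identifies the new contribution with the complexified index difference of $S^n$; and an external product with the Dirac operator of the closed spin$^c$ manifold $\bC\bP^2$ multiplies indices, the $KO$-valued index difference of the Clifford-linear operator on $S^6\times\bR$ entering the product after complexification into $KU$. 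Once the index-difference map of spectra $\cR^{\gen}(M,L)\to\Omega^{\infty+n}\mathbf{KU}$ alluded to after Conjecture~\ref{BR-conj} is set up, both statements follow formally from its naturality and multiplicativity.
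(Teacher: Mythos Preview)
Your argument is correct, and close in spirit to the paper's, but with genuine differences worth noting.

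For part~(i), the paper works directly on $M$: it forms the spin$^c$ bordism $W=(M\times[0,1])\#N^{4k}$ from $M$ to itself, where $N$ is a $2$-connected spin manifold with $\widehat A(N)\neq0$ (built from an $E_8$ plumbing or the Bott manifold), pushes the pair $(g,A)$ across $W$ using the spin$^c$ bordism theorem, and reads off the index difference as $\widehat A(N)$ by bordism invariance of the Dirac index. Your route instead factors through Carr's theorem: you take the known exotic psc metrics on $S^{4k-1}$ (which themselves come from exactly such $N$'s), glue them into $M$ by connected sum, and invoke excision for the relative index. Both are valid; the paper's version is more self-contained and avoids quoting Carr, while yours reduces to the already-packaged spin result. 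The delicate point you flag---that the Gromov--Lawson construction is local, so $(g_j,A_j)$ agrees with $(g_0,A_0)$ off $D^n$ and the relative index theorem applies with the line bundle trivialized there---is correct, though you should note that $A_0\vert_{D^n}$ need not literally be flat; one first gauges it to the trivial connection on a slightly smaller disk before gluing.

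For part~(ii), your approach and the paper's are essentially the same. The paper uses $\bC\bP^2\times S^7$ and a scaling map $\sigma\co\pi_q(\cR^+(S^k))\to\pi_q(\cR^{\gen}(S^k\times M))$ (choosing $t$ small enough that the positive scalar curvature of $t^2g$ dominates), then proves the analogue of your multiplicativity formula as a commutative diagram, citing the external-product multiplicativity of the index. Your choice of $S^6$ in place of $S^7$ is immaterial, and your observation that $\Rgen_{(g_{FS},A_N)}\equiv0$ on $(\bC\bP^2,\cO(3))$ lets you define an honest continuous map $P\co\cR^+(S^6)\to\cR^{\gen}(M,L)$ with no scaling---a mild simplification over the paper's $\sigma$, which is only defined on homotopy classes.
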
  
\subsection{Plan of the paper} In Section
\ref{sec:more} we study the question: does there always exist a
spin$^c$ line bundle $L$ with $\alpha^c(M,L)=0$? In Section
\ref{sec:genpsc} we analyze the relationship between the curvatures
$\Rgen$ and $\Rtw$ as well as the behavior of
$\cR^{\gen}(M,L)$ under conformal deformations. We prove the trichotomy
theorem in Section \ref{sec:trichotomy} and analyze the rigidity case
in Section \ref{sec:rigidity}. We define and study the index-difference map 
$\inddiff^c(M,L)$ in Section \ref{sec:inddiff}.
{\subsection{Acknowledgment} We would like to thank the referee
  for a careful reading of the paper and for many useful suggestions.}

\section{More About Spin$^c$ Manifolds}
\label{sec:more}
In this section we want to go into some more detail about spin$^c$
manifolds.  First of all, we should clarify some terminology.  Often
the words ``spin$^c$ manifold'' are used to describe a manifold
admitting a spin$^c$ structure, but really one should avoid this since
spin$^c$ structures are rarely unique, and in fact \emph{never are} in
the case we are primarily interested in here, namely simply connected
manifolds $M$ that are not spin but admit a spin$^c$ structure.
That's because $M$ being simply connected implies $H^2(M,\bZ)$ is
torsion free, and $M$ being non-spin but admitting a spin$^c$
structure means $w_2(M)\ne 0$ and is the reduction of an integral
class in $H^2(M,\bZ)$.  Thus there must be an infinite set of
cohomology classes, all representing distinct spin$^c$ structures,
that reduce mod $2$ to $w_2(M)$.  Each such cohomology class is
the first Chern class $c_1(L)$ of a line bundle $L$,
and it is best to refer to the {\emph{pair}} $(M,L)$ as being a spin$^c$
manifold.  (Of course $L$ is only determined up to isomorphism;
isomorphism classes of line bundles, with the operation of tensor
product of bundles, give the Picard group $\operatorname{Pic}(M)$,
isomorphic to $H^2(M,\bZ)$ via the first Chern class $c_1$.)

Suppose $M^n$ is closed, simply connected, and non-spin, but admits a spin$^c$
structure $(M,L)$.  Then the index $\alpha^c(M,L)$ of the spin$^c$
Dirac operator on $(M,L)$ is well-defined.  Note that the index
vanishes identically if $n$ is odd.  Then the following question
naturally arises:
\begin{question}
\label{question:vanishingindex}
Let $M$ be a closed simply connected manifold with even dimension
$n=2k$.  Fix an orientation on $M$. Assume that $w_2(M)\ne 0$ {\lp}so
that $M$ is not spin{\rp} but that $w_2(M)$ is the reduction of an
integral class {\lp}so that $M$ admits spin$^c$ structures
$(M,L)${\rp}.  Does there always exist a spin$^c$ line bundle $L$ with
$\alpha^c(M,L)=0$?  If so, how many such line bundles $L$ satisfy this
condition?
\end{question}
The answer to this is the following:
\begin{theorem}
\label{thm:vanishingindex}
Let $M$ be a simply connected closed manifold with even dimension $n=2k\ge6$.
Fix an orientation on $M$. Assume that $w_2(M)\ne 0$ but that
$w_2(M)$ is the reduction of an integral class.
{If the second Betti number $\beta_2(M)$ is $1$ and there is
a bundle $L$ with $\alpha^c(M,L)\ne 0$, then the
number of bundles $L'$ {\lp}up to isomorphism{\rp} with $\alpha^c(M,L')=0$
is finite.  Furthermore, there exist such manifolds $M$ with
$\alpha^c(M,L)\ne 0$ for all spin$^c$ line bundles $L$.
On other hand, there also exist such manifolds with $\alpha^c(M,L)=0$
for all spin$^c$ line bundles $L$.}
\end{theorem}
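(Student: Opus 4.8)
The plan is to run everything through the Atiyah--Singer formula
$$\alpha^c(M,L)=\ind D_L=\langle e^{c_1(L)/2}\,\widehat{A}(M),[M]\rangle\in K_n\cong\bZ$$
(valid since $n=\dim M$ is even; see \cite{lawson89:_spin}). As $M$ is simply connected, $H^2(M;\bZ)$ is torsion-free, so $\beta_2(M)=1$ gives $H^2(M;\bZ)=\bZ\langle x\rangle$; since $w_2(M)\neq0$ lies in the image of reduction mod $2$, the spin$^c$ condition $c_1(L)\equiv w_2(M)\pmod{2}$ forces $c_1(L)=mx$ with $m$ odd. Expanding $e^{mx/2}\widehat A(M)$ and pairing with $[M]$ yields
$$\alpha^c(M,L)=P(m):=\sum_{2i+j=k}\frac{\langle x^{j}\widehat A_i(M),[M]\rangle}{2^{j}\,j!}\,m^{j},\qquad k:=n/2,$$
a polynomial in $m$ of degree $\le k$. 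The hypothesis that some $L$ has $\alpha^c(M,L)\ne0$ says precisely that $P\not\equiv0$; hence $P$ has at most $k$ roots, so only finitely many odd $m$ --- equivalently, finitely many isomorphism classes of line bundles $L'$ --- have $\alpha^c(M,L')=0$.

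For a manifold with $\alpha^c(M,L)=0$ for \emph{every} spin$^c$ line bundle, take $M=\CP^2\times S^4$, which is closed, simply connected, spin$^c$, non-spin (as $w_2(M)=w_2(\CP^2)\ne0$), of dimension $8$. Here $H^2(M;\bZ)=\bZ\langle h\rangle$ with $h$ pulled back from $\CP^2$, so every spin$^c$ line bundle has $c_1(L)=mh$; since $p_1(S^4)=0$, both $\widehat A(M)$ and $e^{mh/2}$ are pulled back from $\CP^2$. Thus $e^{mh/2}\widehat A(M)$ lies in the image of $H^{*}(\CP^2)\to H^{*}(M)$, which vanishes above degree $4$, so its degree-$8$ part --- hence $\alpha^c(M,L)$ --- is $0$ for all $L$. (Any $\CP^2\times S^{2j}$, $j\ge2$, works equally well.)

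The substantive step, which I expect to be the main obstacle, is producing a manifold with $\alpha^c(M,L)\ne0$ for \emph{all} spin$^c$ line bundles: the naive candidates fail, since for a product $\CP^2\times N$ ($N$ spin) one gets $\alpha^c=\tfrac{a^2-1}{8}\ind(D_N\otimes E)$ with $c_1(L)=ah+c_1(E)$, $a$ odd, vanishing at $a=\pm1$ --- and, more generally, any non-spin $4$-dimensional ``factor'' forces a root. My proposed fix is a smooth quartic threefold $M=X_4\subset\CP^4$ (e.g.\ the Fermat quartic): it is simply connected (Lefschetz), $H^2(M;\bZ)=\bZ\langle h\rangle$ with $h$ the hyperplane class, $c_1(X_4)=(5-4)h=h$ (so $M$ is non-spin and spin$^c$), and $\langle h^3,[M]\rangle=4$. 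From $c(TX_4)=(1+h)^5(1+4h)^{-1}$ one gets $c_1=h$, $c_2=6h^2$, hence $p_1(X_4)=c_1^2-2c_2=-11h^2$. For a $6$-manifold the index formula is $\alpha^c(M,L)=\tfrac1{48}\langle c_1(L)^3-c_1(L)\,p_1(M),[M]\rangle$, so with $c_1(L)=mh$,
$$\alpha^c(X_4,L)=\frac{m^3+11m}{48}\,\langle h^3,[X_4]\rangle=\frac{m(m^2+11)}{12},$$
an integer for $m$ odd (as $4\mid m^2+11$ and $3\mid m(m^2+11)$), and never zero for $m\ne0$ --- in particular never for $m$ odd. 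Hence $\alpha^c(X_4,L)\ne0$ for all $L$; this also shows the finite set in the first part can be empty. (More generally $\alpha^c(X_d,L)=\tfrac{d}{48}(m^3-(5-d^2)m)$ for the degree-$d$ hypersurface in $\CP^4$, which has an odd root exactly when $5-d^2$ is an odd perfect square; $d=2$ fails, $d=4$ works.)
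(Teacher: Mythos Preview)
Your argument is correct. Parts (1) and (3) of the statement --- the polynomial finiteness argument and the identically-zero example $\CP^2\times S^4$ --- match the paper's proof essentially verbatim.

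For the never-vanishing example you take a genuinely different route. The paper uses the $8$-manifold $M=\CP^4\#N$, where $N$ is a $3$-connected spin $8$-manifold with $\widehat A(N)=1$ (built by $E_8$-plumbing); one computes $\alpha^c(\CP^4,\cO(2k+1))=\tfrac{1}{24}(k-1)k(k+1)(k+2)\ge 0$, so adding $\widehat A(N)=1$ forces $\alpha^c(M,L)\ge 1$. Your choice of the quartic threefold $X_4\subset\CP^4$ is more economical: it lives in the minimal allowed dimension $n=6$, requires no auxiliary plumbing construction, and the index $\tfrac{m(m^2+11)}{12}$ visibly has no real roots other than $m=0$, hence none among odd integers. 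The paper's approach, on the other hand, illustrates a flexible ``shift by $\widehat A$'' trick that could be adapted to other base manifolds. Both are valid; yours is arguably cleaner for this particular statement, and it has the pleasant feature of exhibiting an example where the finite set of $L'$ with $\alpha^c(M,L')=0$ is actually empty.
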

\begin{proof}
As we saw above, $H^2(M,\bZ)$ is torsion-free and infinite, and the
even classes in this group (those reducing mod $2$ to $0$) act
simply transitively on the classes of spin$^c$ line bundles compatible
with the orientation. {Let $x=c_1(L)\in H^2(M,\bZ)$. By
\cite[Theorem D.15]{lawson89:_spin},}
\[
\begin{aligned}
\alpha^c(M,L)&=\left\langle \widehat {\mathbf A}(M)e^{x/2},[M]\right\rangle
\\& = \left\langle \left(
\frac{1}{k!}\left(\frac{x}{2}\right)^k + \cdots + 1\right)
\left(1 - \frac{1}{24}p_1 +\cdots\right),[M]\right\rangle\\
&= \frac{1}{2^k\,k!}\langle x^k,[M]\rangle + \text{lower order terms in }x,
\end{aligned}
\]
{and so vanishing of $\alpha^c(M,L)$
is a polynomial condition on $x=c_1(L)$, which we can think of as
living in the real vector space $H^2(M,\bR)$.  If $\beta_2(M)=1$,
this vector space is one-dimensional, and a polynomial equation in
one variable can have only finitely many roots unless it vanishes
identically.  So this proves the first statement.}

{To get an example where $\alpha^c(M,L)=0$ for all $L$,
consider $M=\bC\bP^2\times S^4$. If $x_2$ is the standard generator
of $H^2(\bC\bP^2, \bZ)$ and $y_4$ is the standard generator of
$H^4(S^4, \bZ)$, then $M$ has the cohomology ring
$\bZ[x_2, y_4]/(x_2^3, y_4^2)$ and every class in  $H^2(M,\bZ)$
is of the form $kx_2$.  Furthermore,
$e^{kx_2/2}=1+\frac{kx_2}{2}+\frac{k^2x_2^2}{8}$, while
$\widehat {\mathbf A}(M) = 1 - \frac{x_2^2}{8}$.  Since neither of
these polynomials involves $y_4$, their product must evaluate to
$0$ on $[M]$.}

{Now we give an example where $M$ doesn't admit any spin$^c$ line
bundle $L$ with vanishing index. We take $M=\bC\bP^4\# N$, where $N$
is a $3$-connected spin $8$-manifold with $\widehat A(N)=1$.
(One can construct such an $N$ via the
$E_8$-plumbing; see \cite{Bott-manifold}.) Since $N$ is $3$-connected,
any spin$^c$ line bundle on $M$ comes uniquely from one on $\bC\bP^4$.
But for a line bundle $L$ on $\bC\bP^4$ with Chern class
$c_1(L)=(2k+1)x$, where $x$ is the standard
generator of $H^2(\bC\bP^4,\bZ)$ --- we write things this way since
the Chern class must be odd by the spin$^c$ condition --- we have
\begin{equation*}
\begin{aligned}
\alpha^c(\bC\bP^4,L)&=\langle \widehat{\mathbf A}(\bC\bP^4)e^{(2k+1)x/2},
      [\bC\bP^4]\rangle\\
      &= \text{coefficient of }x^4\text{ in }
      \left(\frac{x/2}{\sinh(x/2)}\right)^5 e^{(2k+1)x/2}\\
      &= \text{coefficient of }x^4\text{ in }
      \left(1-\frac{5x^2}{24} +\frac{3x^4}{128}\right)\times\\
      &\qquad\qquad
      \left(1+\frac{(2k+1)x}{2}+\frac{(2k+1)^2x^2}{8}+\frac{(2k+1)^3x^3}{48}+
      \frac{(2k+1)^4x^4}{384}\right)\\
      &= \frac{1}{24}(k-1)k(k+1)(k+2).
\end{aligned}
\end{equation*}
(This calculation, by the way, is {also} in
\cite[Proposition 4.3]{MR508087} and in \cite[Remark 3.2]{MR1876863}.)
For $k$ an integer, this is always non-negative: it vanishes for
$k=-2, -1, 0, 1$.  We obtain
\begin{equation*}
  \alpha^c(M,L)=\alpha^c(\bC\bP^4,L)+\widehat A(N)\ge 1, 
\end{equation*}
which never vanishes.}
\end{proof}

\begin{remark}
If $\beta_2(M)>1$ and $\alpha^c(M,L)$ doesn't vanish identically,
then it can still happen that
the number of line bundles $L$ with $\alpha^c(M,L)=0$
can be infinite.  An example is $M=\bC\bP^2\# \overline {\bC\bP^2}$,
$\bC\bP^2$ with one point blown up.  We can identify $H^2(M,\bZ)$
with $\bZ x \oplus \bZ y$, where $x^2=-y^2$ is dual to $[M]$ and $xy=0$.
Then if $c_1(L)=ax+by$, we have
\[
\begin{aligned}
  \alpha^c(M,L)&=\left\langle \widehat {\mathbf A}(M)e^{c_1(L)/2},
        [M]\right\rangle\\
  &=\left\langle \left(1-\frac{x^2}{8}+\frac{x^2}{8}\right)e^{ax/2}e^{by/2},
  [M]\right\rangle\\
  &= \frac{a^2-b^2}{8}.
\end{aligned}
\]
(Note that $a$ and $b$ both have to be odd to get a spin$^c$ structure.)
This vanishes whenever $a=\pm b$, so there are infinitely many choices
with vanishing $\alpha^c$.
\end{remark}

\section{Further Aspects of the ``Generalized psc'' Condition}
\label{sec:genpsc}
{Let $(M,L)$ be a spin$^c$ manifold, $h$ and $A$ a hermitian metric and
  unitary connection on the line bundle $L$, as usual.}
First we want to justify our previous assertion that there is no loss
of generality in fixing the metric $h$ on $L$ once and for
all. \footnote{We use this and hereafter omit $h$ from most
  notations.}  Before we do this, we need to fix some terminology.
If {$\omega$} is a (real) $2$-form on a manifold $M$, we denote by
{$|\omega|_{op}$} the pointwise operator norm of Clifford multiplication
{$c(\omega)$}, acting on spinors.  In other words, if $x\in M$,
{%
\begin{equation*}
    |\omega|_{op}(x)=\max \{\vert c(\omega)\psi(x)\vert:\vert\psi(x)\vert =1\},
\end{equation*}
where} the maximum is taken over all spinors $\psi$
with norm $1$ at $x$.
\begin{lemma}
\label{lem:gpsccond}
Let $(M,L)$ be a spin$^c$ manifold. Let $(g,h,A)$ be
  a triple {\lp}$g$ a Riemannian metric on $M$, $h$ a hermitian
  metric on $L$, and $A$ a connection on $L${\rp}. Then the condition
  $\Rtw_{(g,A)}>0$ is equivalent to the condition $\Rgen_{(g,A)}>0$.
\end{lemma}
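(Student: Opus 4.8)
The plan is to reduce the statement to a pointwise linear-algebra fact about Clifford multiplication by the curvature $2$-form. Recall that $\Rtw_{(g,A)} = R_g + 2ic(\Omega)$ acts on the (spin$^c$) spinor bundle, where $R_g$ is a scalar (acting by multiplication) and $c(\Omega)$ is Clifford multiplication by the curvature $2$-form $\Omega$ of $A$. Since $\Omega$ is an imaginary-valued $2$-form (the curvature of a unitary connection on a line bundle), the operator $ic(\Omega)$ is a pointwise self-adjoint endomorphism of the spinor bundle; write $\omega := i\Omega$, a \emph{real} $2$-form, so that $2ic(\Omega) = 2c(\omega)$ and $\Rtw_{(g,A)} = R_g + 2c(\omega)$ with $c(\omega)$ self-adjoint. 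The key point is then: at each point $x\in M$, the self-adjoint operator $c(\omega)(x)$ has operator norm exactly $|\omega|_{op}(x)$ (this is precisely the definition recalled just before the lemma), hence its spectrum at $x$ is contained in $[-|\omega|_{op}(x),\, |\omega|_{op}(x)]$, and moreover $-|\omega|_{op}(x)$ is an actual eigenvalue (since for a self-adjoint operator the operator norm is the maximum of the absolute values of the eigenvalues, and one checks that Clifford multiplication by a $2$-form has spectrum symmetric about $0$ — e.g.\ from the fact that $c(\omega)$ anticommutes with a suitable unit vector's Clifford action, or from the explicit diagonalization of $c(\omega)$ after bringing $\omega$ to a standard skew-symmetric normal form).

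First I would set up this notation and record the self-adjointness of $c(\omega)$ and the identity $\|c(\omega)(x)\| = |\omega|_{op}(x)$. Second, I would argue that the spectrum of $c(\omega)(x)$ is symmetric about the origin: after an orthonormal change of frame, $\omega(x)$ can be written as $\sum_j \lambda_j\, e_{2j-1}\wedge e_{2j}$, and then $c(\omega)(x) = \sum_j \lambda_j\, c(e_{2j-1})c(e_{2j})$; each summand $c(e_{2j-1})c(e_{2j})$ squares to $-1$ and they commute with each other, so on the joint $(\pm i)$-eigenspaces the operator $c(\omega)(x)$ takes the values $\sum_j \pm\lambda_j$, which is manifestly a set symmetric under negation. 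Consequently the smallest eigenvalue of $c(\omega)(x)$ is $-|\omega|_{op}(x)$, and therefore the smallest eigenvalue of $\Rtw_{(g,A)}(x) = R_g(x) + 2c(\omega)(x)$ is $R_g(x) - 2|\omega|_{op}(x) = \Rgen_{(g,A)}(x)$. Third, I would conclude: $\Rtw_{(g,A)}>0$ means the operator $\Rtw_{(g,A)}(x)$ is positive-definite for every $x$, i.e.\ its smallest eigenvalue is positive for every $x$, which by the previous sentence says exactly $\Rgen_{(g,A)}(x)>0$ for every $x$, i.e.\ $\Rgen_{(g,A)}>0$. This is an ``if and only if'', with both directions immediate once the eigenvalue identity is in hand.

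I do not expect a serious obstacle here; the statement is essentially a bookkeeping consequence of the spectral description of Clifford multiplication by a $2$-form. The one point requiring a little care is the claim that the spectrum of $c(\omega)(x)$ is symmetric about $0$, so that the operator norm (a priori only the maximum of $|{\cdot}|$ over eigenvalues) is actually attained as $-|\omega|_{op}(x)$ on the ``negative side'' — this is what forces the \emph{minimum} eigenvalue of $\Rtw_{(g,A)}$ to equal $\Rgen_{(g,A)}$ rather than merely bounding it. The normal-form computation above handles this; alternatively one can note that for any unit vector $v$ with $c(v)$ anticommuting with $c(\omega)$ (which exists generically, and the general case follows by continuity, or one works directly in each $2$-plane block), conjugation by $c(v)$ sends $c(\omega)$ to $-c(\omega)$, giving the symmetry. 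Everything else — the identification $2ic(\Omega) = 2c(\omega)$, self-adjointness, and the translation between ``operator positive-definite pointwise'' and ``minimal eigenvalue positive pointwise'' — is routine.
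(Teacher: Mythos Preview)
Your argument is essentially the paper's: the spectrum of $ic(\Omega)$ at each point is symmetric about $0$, so the least eigenvalue of $\Rtw_{(g,A)}(x)$ equals $R_g(x)-2|\Omega|_{op}(x)=\Rgen_{(g,A)}(x)$, and positivity of the operator is equivalent to positivity of this scalar. One slip to fix: in the paper's convention $\Omega$ is a \emph{real} $2$-form, and Clifford multiplication by a real $2$-form is \emph{skew}-adjoint (so $c(\Omega)$ is skew-adjoint with eigenvalues $\pm i\lambda$ and $ic(\Omega)$ is self-adjoint, exactly as the paper states); in your normal-form computation each $c(e_{2j-1})c(e_{2j})$ has eigenvalues $\pm i$, so $c(\omega)$ has eigenvalues $i\sum_j\pm\lambda_j$, not $\sum_j\pm\lambda_j$ --- the symmetry about $0$ survives, but your claim that $c(\omega)$ is self-adjoint does not. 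Your explicit diagonalization supplies justification the paper omits (it simply asserts the $\pm$-pairing).
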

\begin{proof}
Clifford multiplication $c(\Omega)$ by $\Omega$ is skew-adjoint, and
its eigenvalues come in pairs $\pm i\lambda$, with $\lambda$ real,
so the condition that $\Rtw_{(g,A)}=R_g + 2\cR_L>0$
reduces to saying that $R_g > 2|\Omega|_{op}$,
where $|\Omega|_{op}$ is the pointwise operator
norm of $c(\Omega)$, or that the eigenvalue $i\lambda$ of
$c(\Omega)$ at any point $x\in M$, with $\lambda\ge 0$ as big as
possible, is less than $\frac12 R_g(x)$. {This is the condition
  $\Rgen_{(g,A)}>0$.}

In the other direction, if $\Rgen_{(g,A)}>0$, then the minimal
  eigenvalue of $ic(\Omega)$ is everywhere less than $\frac12 R_g(x)$
  in absolute value, which shows that the self-adjoint operator
  $\Rtw_{(g,A)}$ is positive.
\end{proof}
Thus Lemma \ref{lem:gpsccond} says that
$\Rtw_{(g,A)}>0\Leftrightarrow \Rgen_{(g,A)}>0$, with the latter a
more manageable condition to work with since $\Rgen_{(g,A)}$ is
scalar-valued. {Note that both conditions imply vanishing of
$\alpha^c(M,L)$, because of equation \eqref{Lich-formula}.}
We denote by $\cR(M)$ the space of Riemannian metrics
on $M$, by $\cA(M,L)$ the space of connections on $L$ and by $\cH(L)$
the space of Hermitian metrics on $L$.  Let 
\begin{equation*}
  \cR^{\tw}(M,L):=\bigl\{(g,A,h)\in\cR(M)\times \cA(M,L)\times \cH(L)
  \mid \Rtw_{(g,A,h)}>0\bigr\}.
\end{equation*}
Let $h_0\in \cH(L)$ be a fixed Hermitian
metric on $L$, and let
$\cR^{\tw}(M,L)_{h_0}=\{(g,A,h_0)\in \cR^{\tw}(M,L)\}$.
Since the contractible group $C^{\infty}_+(M)$ of positive smooth functions
acts freely and transitively on $\cH(L)$ by pointwise multiplication, for any
metric $h\in \cH(L)$, there is a unique function $f\in C^{\infty}_+(M)$ such
that $f\cdot h_0=h$. We claim that if $\Rtw_{(g,A,h_0)}>0$ for some triple
$(g,A,h_0)$, then $\Rtw_{(g,A,f\cdot h_0)}>0$ for any $f\in C^{\infty}_+(M)$.
To see this, observe that changing the metric does
not change the curvature $2$-form $\Omega$ or the Clifford algebra
bundle, though it does change the pointwise norm on spinors.  At the
same time, the {positivity conditions}
of Lemma \ref{lem:gpsccond} do not change
when the norm on $L$ is rescaled by the constant $f(x)$.
Thus we have proved the following:
\begin{proposition}
\label{prop:dontneedh}
Let $(M,L)$ be a spin$^c$ manifold with $\cR^{\tw}(M,L)\neq \emptyset$,
and let $h_0\in \cH(L)$ be a fixed Hermitian metric. Then the inclusion
$\cR^{\tw}(M,L)_{h_0}\subset \cR^{\tw}(M,L)$ is a homotopy equivalence.
\end{proposition}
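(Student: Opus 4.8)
The plan is to produce an explicit strong deformation retraction of $\cR^{\tw}(M,L)$ onto the subspace $\cR^{\tw}(M,L)_{h_0}$; the inclusion is then a homotopy equivalence, with the retraction as a homotopy inverse. The observation that makes this work is the one already recorded in the paragraph preceding the statement: the condition $\Rtw_{(g,A,h)}>0$ --- equivalently $\Rgen_{(g,A)}>0$ by Lemma \ref{lem:gpsccond} --- does not depend on the Hermitian metric $h$ at all. Indeed, Clifford multiplication $c(\Omega)$ acts only on the spin part of the spinor bundle, so rescaling $h$ rescales the norm on spinors uniformly and leaves the pointwise operator norm $|\Omega|_{op}$, hence $\Rgen_{(g,A)}$, unchanged. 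Consequently $\cR^{\tw}(M,L)$ is precisely the preimage of $B:=\{(g,A)\in\cR(M)\times\cA(M,L)\mid\Rgen_{(g,A)}>0\}$ under the projection that forgets $h$, so it is the honest topological product $B\times\cH(L)$, and $\cR^{\tw}(M,L)_{h_0}$ corresponds to $B\times\{h_0\}$.

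I would then note that $\cH(L)$ is convex: a convex combination $(1-t)h_1+th_2$ of positive-definite Hermitian forms is positive-definite, hence again lies in $\cH(L)$. Define
\begin{equation*}
H\co\cR^{\tw}(M,L)\times[0,1]\longrightarrow \cR(M)\times\cA(M,L)\times\cH(L),
\qquad H\big((g,A,h),t\big)=\big(g,\,A,\,(1-t)h+th_0\big).
\end{equation*}
By convexity $H$ is well-defined into $\cR(M)\times\cA(M,L)\times\cH(L)$, and by the $h$-independence just noted it in fact takes values in $\cR^{\tw}(M,L)$. It is continuous (linear interpolation of sections is continuous in the relevant function-space topology), it satisfies $H(\,\cdot\,,0)=\id$ and $H\big(\cR^{\tw}(M,L),1\big)\subseteq\cR^{\tw}(M,L)_{h_0}$, and it is stationary on $\cR^{\tw}(M,L)_{h_0}$ because there $(1-t)h_0+th_0=h_0$ for all $t$. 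Thus $H$ exhibits the required strong deformation retraction, proving the proposition.

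There is no genuinely hard step here. The only claim needing justification beyond standard facts about convex (equivalently, contractible) spaces is the $h$-independence of $|\Omega|_{op}$, and that is exactly the elementary computation already carried out before the statement; everything else is the observation that the inclusion of a point into a contractible fibre --- applied uniformly over $B$ --- is a homotopy equivalence. If one preferred, one could instead invoke $\cH(L)\cong C^{\infty}_+(M)$ (the contractible group of positive smooth functions) together with a fibration argument, but the direct interpolation above is the shortest route.
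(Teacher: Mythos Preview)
Your proof is correct and follows essentially the same approach as the paper. Both arguments rest on the key observation (proved in the paragraph preceding the proposition) that the positivity condition is independent of $h$, so that $\cR^{\tw}(M,L)\cong B\times\cH(L)$; the paper then invokes the free transitive action of the contractible group $C^\infty_+(M)$ on $\cH(L)$ to conclude, while you instead exploit convexity of $\cH(L)$ to write the linear deformation retraction explicitly --- a cosmetic difference you yourself flag in your final paragraph.
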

\begin{remark}
From now on, we identify the spaces $\cR^{\tw}(M,L)$
and $\cR^{\gen}(M,L)$, and because of Proposition
\ref{prop:dontneedh} we will work with the space $\cR^{\tw}(M,L)$
assuming that the line bundle $L$ and the metric $h$ on $L$ are
fixed once and for all.  We will omit the metric $h$ from the
notations, i.e., work with pairs $(g,A)$ as elements of
$\cR^{\tw}(M,L)$ and $\cR^{\gen}(M,L)$.  Furthermore, we will see
soon that we really only need to keep track of the Riemannian metric
$g$ and not of the bundle connection $A$ since there is a canonical
choice of the connection which makes the curvature 2-form $\Omega$
harmonic with respect to the metric $g$.
\end{remark}
Next, we want to somewhat simplify the study of pairs $(g,A)\in \cR^{\tw}(M,L)$.
As a warm-up, let's consider just the usual psc
condition.  Let $M$ be a closed $n$-manifold admitting a psc metric,
$n\geq 3$.  Let $\cR(M)$ be the space of all Riemannian metrics on
$M$, and let $\cR^+(M)\subset \cR(M)$ be the subspace of psc metrics.
The space of (pointwise) conformal classes $\cC(M)$ is defined as the
orbit space of the left action
\begin{equation*}
  C^{\infty}_+(M)\times \cR(M) \to \cR(M), \ \ \ \ (f,g)\mapsto f \cdot g,
\end{equation*}
so that $\cC(M):=C^{\infty}_+(M)\backslash \cR(M)$ (here
$C^{\infty}_+(M)$ is the space of positive smooth functions). Denote
by $p\co \cR(M)\to \cC(M)$ the corresponding projection.
{We give $\cC(M)$ the quotient topology.  Note that $\cC(M)$
  is Hausdorff.  To see this, it suffices to show that if
  $g_n\in \cR(M)$, $f_n\in C^{\infty}_+(M)$, and $g_n\to g_0$,
  $f_ng_n\to g_1$, then $g_1=fg_0$ with $f_n\to f$ for some $f$ in
  $C^{\infty}_+(M)$. But this is clearly true pointwise, and since
  $\{g_n\}$ and $\{f_ng_n\}$ converge uniformly along with their derivatives,
  by definition of the topology on $\cR(M)$, it is also
  clear that one gets such convergence of $f_n$ to $f$.}   We say that
a conformal class $C\in \cC(M)$ is positive if it contains a
psc-metric, and let $\cC^+(M)\subset \cC(M)$ be the subspace of
positive conformal classes.  We note that the subspace
$\cR^{+,c}(M):=p^{-1}(\cC^+(M))$ of ``conformally positive Riemannian
metrics'' $g$ is strictly larger then the space $\cR^+(M)$.

According to Kazdan-Warner \cite[Theorem 3.2]{MR365409}, the
sign of the first eigenvalue $\mu_1(\sL_g)$ of the conformal Laplacian
\begin{equation}
\label{eq:confLap}
\sL_g=-\frac{4(n-1)}{n-2}\Delta_g + R_g
\end{equation}
is constant on conformal classes, and the space $\cR^{+,c}(M)$ can be
characterized as the set of metrics for which $\mu_1(\sL_g)$ is
strictly positive.  Indeed, if $\mu_1(\sL_g)>0$, then the
corresponding eigenfunction $\varphi$ can be chosen strictly positive
and \cite[Remark 2.12]{MR365409} the (pointwise) conformal metric
$g_1 = \varphi^{\frac{4}{n-2}}g$
has $R_{g_1}>0$ everywhere. Conversely, if $R_g>0$, then it is
clear that $\sL_g$ is bounded away from zero by a positive constant
(the minimum value of $R_g$), and thus $\mu_1(\sL_g)>0$.
So $\cR^+(M)\subseteq \cR^{+,c}(M)$
and $\cR^{+,c}(M)$ is the smallest family of Riemannian metrics
containing $\cR^+(M)$ and closed under (pointwise) conformal changes of the
metric.  On spin manifolds, invertibility of the Dirac operator
extends from $\cR^+(M)$ to $\cR^{+,c}(M)$ via \cite{MR834486},
which says that $\lambda_1^2(D_g)\ge\frac{n}{4(n-1)}\mu_1(\sL_g)>0$, where
$\lambda_1(D_g)$ is the first eigenvalue of the Dirac operator $D_g$.
\begin{theorem}
\label{thm:confclass}
Let $M$ be a closed manifold, $\dim M = n\geq 3$, admitting a metric
of positive scalar curvature.  The inclusion $\cR^+(M)\hookrightarrow
\cR^{+,c}(M)$ is a homotopy equivalence.
\end{theorem}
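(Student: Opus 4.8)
The plan is to exhibit an explicit deformation retraction of $\cR^{+,c}(M)$ onto $\cR^+(M)$ built from the first eigenfunction of the conformal Laplacian. First I would record the key analytic input: for $g\in\cR^{+,c}(M)$ the operator $\sL_g$ has $\mu_1(\sL_g)>0$ by the Kazdan--Warner characterization quoted above, and the corresponding eigenspace is one-dimensional, spanned by a function that may be chosen strictly positive; normalize it, say with $\int_M\varphi_g\,d\mathrm{vol}_g=1$ (or $\|\varphi_g\|_{L^2}=1$), to make it unique. The standard elliptic-theory fact I would cite (simplicity of the bottom eigenvalue, smooth/analytic dependence of a simple eigenpair on the coefficients of the operator, and hence on $g$ in the $C^\infty$ topology) gives that $g\mapsto\varphi_g$ is a continuous map $\cR^{+,c}(M)\to C^\infty_+(M)$; moreover $\varphi_g\equiv\mathrm{const}$ exactly when $R_g>0$ already, i.e.\ when $g\in\cR^+(M)$, since in that case the constant function is the positive eigenfunction.

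Next I would write down the homotopy. Define $H\co[0,1]\times\cR^{+,c}(M)\to\cR(M)$ by
\begin{equation*}
H(t,g)=\bigl((1-t)+t\,\varphi_g\bigr)^{\frac{4}{n-2}}\,g.
\end{equation*}
At $t=0$ this is the identity, and at $t=1$ it is $g\mapsto\varphi_g^{4/(n-2)}g$, which by \cite[Remark 2.12]{MR365409} lies in $\cR^+(M)$. The two things to verify are: (a) $H(t,g)\in\cR^{+,c}(M)$ for every $t$, and (b) $H(1,\cdot)$ restricted to $\cR^+(M)$, together with $H(t,\cdot)$ for $g\in\cR^+(M)$, stays inside $\cR^+(M)$ — this second point is immediate because for $g\in\cR^+(M)$ we have $\varphi_g\equiv\varphi_0$ a positive constant, so $H(t,g)=((1-t)+t\varphi_0)^{4/(n-2)}g$ is just a constant rescaling of $g$, hence still psc. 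For (a), the point is that $\mu_1(\sL_g)$ is conformally invariant in sign; but here I would be slightly more careful, since $(1-t)+t\varphi_g$ need not be an eigenfunction. Instead I would argue directly: $H(t,g)$ is pointwise conformal to $g$, so it has the same conformal class, so $\mu_1(\sL_{H(t,g)})=\mu_1(\sL_g)>0$, whence $H(t,g)\in\cR^{+,c}(M)$. That makes (a) essentially free once one knows that $\cR^{+,c}(M)$ is by definition a union of conformal classes — which it is, being $p^{-1}(\cC^+(M))$. Continuity of $H$ in $(t,g)$ follows from continuity of $g\mapsto\varphi_g$ and continuity of the maps $(f,g)\mapsto f^{4/(n-2)}g$ on $C^\infty_+(M)\times\cR(M)$.

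Finally I would assemble: $r:=H(1,\cdot)\co\cR^{+,c}(M)\to\cR^+(M)$ is a retraction (it restricts to a constant-rescaling, hence stays in $\cR^+(M)$, and one can further compose with the obvious deformation inside a conformal class that scales $\varphi_g$ back toward the constant normalization to make $r$ literally the identity on $\cR^+(M)$ — or simply note that $\cR^+(M)\hookrightarrow\cR^{+,c}(M)\xrightarrow{r}\cR^+(M)$ composed with the straight-line constant rescaling homotopy is the identity up to homotopy, which already suffices for a homotopy equivalence), and $H$ is a homotopy from $\id_{\cR^{+,c}(M)}$ to $i\circ r$, where $i$ is the inclusion. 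Hence $i$ is a homotopy equivalence with homotopy inverse $r$. The main obstacle, and the step that needs the most care, is the continuity and simplicity statement for $g\mapsto\varphi_g$: one must invoke the Krein--Rutman/Courant-type simplicity of the principal eigenvalue of the Schrödinger-type operator $\sL_g$ on a closed manifold, together with the smooth dependence of a simple eigenpair on the operator's coefficients in the $C^\infty$ topology on $\cR(M)$; everything else is formal manipulation of conformal rescalings. I would also remark that exactly the same argument will be reused, mutatis mutandis, with $\sL_g$ replaced by the generalized conformal Laplacian $\sL_{(g,A)}=\sL_g-2|\Omega|_{op}$ when we later study $\cR^{\gen}(M,L)$, the only subtlety there being that $|\Omega|_{op}$ is merely continuous (Lipschitz) rather than smooth, so one works with a continuous potential and the corresponding continuous dependence of $\varphi_{(g,A)}$.
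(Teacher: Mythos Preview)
Your overall strategy---push each $g\in\cR^{+,c}(M)$ along the conformal path $H(t,g)=\bigl((1-t)+t\varphi_g\bigr)^{4/(n-2)}g$ toward the psc representative $\varphi_g^{4/(n-2)}g$---is sound and is in fact a more direct version of the paper's argument, which routes through the quotient $\cC^+(M)$ and sections thereof. The continuity of $g\mapsto\varphi_g$ and the verification of (a) are fine.

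The gap is in your justification of (b). You assert that for $g\in\cR^+(M)$ the first eigenfunction $\varphi_g$ is a positive constant, but this is false: $\sL_g(1)=R_g$, so the constant function is an eigenfunction of $\sL_g$ only when $R_g$ is itself constant. For a generic psc metric $\varphi_g$ is nonconstant, so $H(t,g)$ is not a constant rescaling of $g$ and your hedged alternatives (``scale $\varphi_g$ back toward the constant normalization'', ``straight-line constant rescaling homotopy'') do not apply as stated. The fix is the same linearity computation that drives the paper's proof of fiberwise convexity: with $u_t=(1-t)+t\varphi_g$ and $\alpha=\tfrac{n+2}{n-2}$,
\[
R_{H(t,g)}=u_t^{-\alpha}\sL_g(u_t)=u_t^{-\alpha}\bigl((1-t)\sL_g(1)+t\sL_g(\varphi_g)\bigr)
=u_t^{-\alpha}\bigl((1-t)R_g+t\,\mu_1(\sL_g)\,\varphi_g\bigr),
\]
which is strictly positive whenever $R_g>0$. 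Thus $H$ restricts to a homotopy $[0,1]\times\cR^+(M)\to\cR^+(M)$ from $\id$ to $r\circ i$, and together with $H$ on the ambient space this gives the homotopy equivalence. Once you replace the incorrect ``$\varphi_g$ is constant'' step by this computation, your argument is complete and essentially equivalent to the paper's, just packaged without the detour through $\cC^+(M)$.
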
  
\begin{proof}
Note that $\cR^+(M)$ and $\cR^{+,c}(M)$ are both open subsets of {the
  separable Fr\'echet space of symmetric $2$-tensors on the tangent
  bundle}, so they {each} have the homotopy type of a countable CW
complex \cite[Theorem 1]{MR100267}. We will show that the projection
maps $\cR^+(M)\to {\cC^+(M)}$ and $p\co\cR^{+,c}(M)\to {\cC^+(M)}$ are
both fibrations with contractible fibers, and hence are homotopy
equivalences. By the definitions, both of these maps are surjective,
and $\cC^+(M)$ has the quotient topology.  Since $C^\infty_+(M)$ is a
contractible Fr\'echet Lie group (under multiplication)
and it acts smoothly and freely on
$\cR^{+,c}(M)$, with {$\cC^+(M)$} the (paracompact) quotient space,
the projection map $p\co\cR^{+,c}(M)\to \cC^+(M)$ is a principal
bundle for a contractible Fr\'echet Lie group, and thus is a homotopy
equivalence. In particular, it has a section. Choose such a
section $s_1\co \cC^+(M)\to \cR^{+,c}(M)$; it's not canonical, but
that won't matter for us.  So for each conformal class $C\in
\cC^+(M)$, $s_1(C)$ is a metric in the class $C$ for which
$\mu_1(\sL_{s_1(C)})>0$. There is a positive eigenfunction $\varphi_C$
for the eigenvalue $\mu_1(\sL_{s_1(C)})$, and it's unique up to
multiplication by positive scalars.  We can make it unique (on the
nose!) by normalizing it to have maximum value $1$ on $M$.  Then the
map $C\mapsto \varphi_C$ is continuous in the appropriate topologies,
because of elliptic regularity.  Now we get a section $s\co
\cC^+(M)\to \cR^+(M)$ by taking $s(C) = \varphi_C^{\frac{4}{n-2}}
s_1(C)$, using \cite[Remark 2.12]{MR365409} to confirm that the scalar
curvature of this metric is indeed strictly positive.  We need to show
that $\cR^+(M)\to \cC^+(M)$ is also a fibration with contractible
fibers. We will show not only that the fibers are contractible
open subsets of the fibers of $p$, but also that there is a
deformation retraction from $\cR^+(M)$ down to $s(\cC^+(M))$ (a
homeomorphic copy of $\cC^+(M)$), and thus that $\cR^+(M)$ and
$\cC^+(M)$ are homotopy equivalent.

Fix $C\in \cC^+(M)$
and some metric $g_0\in p^{-1}(C)\cap \cR^+(M)$.  We will show that
$p^{-1}(C)\cap \cR^+(M)$ is convex in a suitable sense, hence
also contractible. 

Let $g_1\in p^{-1}(C)\cap \cR^+(M)$.  Then there is a homotopy
$g_t,\,0\le t\le 1$ from $g_0$ to $g_1$ within $p^{-1}(C)$, given by
$g_t = u_t^{\frac{4}{n-2}}g_0$, where $u_t = (1-t)+tu$, for the unique
$u\in C^\infty(M)$ with $u>0$ everywhere and $u^{\frac{4}{n-2}}g_0=g_1$.
This $u$ depends continuously on $g_1$. We just need to observe that each
$g_t$ has psc.  But by the formula for scalar curvature of a conformal
deformation of a metric, $R_{g_t}=u_t^{-\alpha}\sL_{g_0}(u_t)$, where
$\alpha = 1+\frac{4}{n-2}$.  But by linearity of the operator
$\sL_{g_0}$, we have $\sL_{g_0}(u_t)=(1-t)\sL_{g_0}(1)+t\sL_{g_0}(u)$.
Now $\sL_{g_0}(1)=R_{{g_0}}>0$ and $\sL_{g_0}(u)=u^{\alpha}R_{g_1}$ is also
positive, since we assumed $g_1\in \cR^+(M)$.  Thus $R_{g_t}>0$ for
all $0\le t\le 1$, as we needed to show.

Now we get a deformation retraction of $\cR^+(M)$
down to $s(\cC^+(M))$ by using the above homotopy from a general
metric $g_1\in \cR^+(M)$ to $g_0 = s(p(g_1))$, and thus $p$ restricted
to $\cR^+(M)$ is a homotopy equivalence.
\end{proof}
The advantage of Theorem \ref{thm:confclass} is that it makes it
possible to study the topology of $\cR^+(M)$ via tools involving
conformal geometry.

Now we would like to do something similar for the curvature condition
$\Rgen_{(g,A)}>0$ on spin$^c$ manifolds (when $n\ge3$).
For this the paper
\cite{MR834486}, giving estimates on eigenvalues of the Dirac operator
using the conformal Laplacian, should be replaced by
\cite{MR1162671,MR1705917,MR2661160}.  
What B\"ar calls the curvature endomorphism $\cK$ is
$\frac{1}{4}R_g+\frac12 i c(\Omega) = \frac14 \Rtw_{(g,A)}$, which is
a self-adjoint endomorphism at each point of $M$, and he takes
$\kappa$ \cite[bottom of p.\ 42]{MR1162671} to be a scalar lower bound
for $\cK$, which since the eigenvalues of $ic(\Omega)$ come in $\pm$
pairs can be taken to be $\frac14 \Rgen_{(g,A)}$, so the estimate from
\cite[Theorem 3]{MR1162671} turns out to be exactly the inequality
\begin{equation*}
\lambda_1^2(D_L)\ge \frac{n}{4(n-1)}\mu_1(\sL_{(g,A)}),
\end{equation*}
with $\sL_{(g,A)}$ defined by replacing $R_g$ by $\Rgen_{(g,A)}$ in
the formula for $\sL_g$, and this 
is nothing but the spin$^c$ analogue of Hijazi's inequality.

In some cases, one can basically get rid of the connection $A$ on
the line bundle $L$ for the condition $\Rgen_{(g,A)}>0$.
Here is a case where this is possible.
\begin{lemma}
\label{lem:2normcurvature}
Let $L$ be a line bundle on a closed Riemannian manifold $(M,g)$.
Then the $2$-norm\footnote{This is defined by $\Vert\Omega\Vert_2^2 =
\int_M \Omega\wedge *\Omega$.}  of the curvature $2$-form $\Omega$ of
a connection on $L$ is minimized when $\Omega$ is the unique harmonic
$2$-form in its de Rham class.
\end{lemma}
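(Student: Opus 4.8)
The plan is to reduce the statement to the standard Hodge-theoretic fact that, among all closed forms representing a fixed de Rham class, the harmonic representative is the unique one of smallest $L^2$ norm, and then to observe that the curvature $2$-forms of connections on $L$ are precisely the closed $2$-forms in one fixed class.

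First I would record the two elementary facts about connections on a line bundle that make this reduction work. By Chern--Weil theory the curvature $2$-form $\Omega$ of any connection $A$ on $L$ is closed, $d\Omega=0$, and its de Rham class $[\Omega]\in H^2(M;\bR)$ is a fixed real multiple of $c_1(L)$; in particular it does not depend on the choice of $A$. Conversely, any two connections on $L$ differ by a global $1$-form $a$, and the corresponding curvatures then differ by $da$; since $a$ ranges over all of $\Omega^1(M)$, the form $da$ ranges over all exact $2$-forms. Hence, as $A$ varies over all connections on $L$, the curvature $\Omega$ ranges over exactly the affine space $\Omega_0+d\,\Omega^1(M)$ of closed $2$-forms representing the fixed class $[\Omega_0]$, and minimizing $\Vert\Omega\Vert_2$ over connections is the same as minimizing it over that affine space.

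Next I would apply the Hodge decomposition to the closed form $\Omega$. Write $\Omega=\Omega_h+da+d^{*}b$ with $\Omega_h$ harmonic; the condition $d\Omega=0$ forces $d\,d^{*}b=0$, whence $\Vert d^{*}b\Vert_2^2=\langle b,\,d\,d^{*}b\rangle=0$, so in fact $\Omega=\Omega_h+da$ with $\Omega_h$ the unique harmonic representative of $[\Omega_0]$. Since $\Omega_h$ is harmonic it is co-closed, so $\langle\Omega_h,\,da\rangle=\langle d^{*}\Omega_h,\,a\rangle=0$, and the Pythagorean identity gives $\Vert\Omega\Vert_2^2=\Vert\Omega_h\Vert_2^2+\Vert da\Vert_2^2\ \ge\ \Vert\Omega_h\Vert_2^2$, with equality if and only if $da=0$, i.e.\ if and only if $\Omega=\Omega_h$. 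This proves both that the minimum is attained at the harmonic form and that it is the unique minimizer; uniqueness of the harmonic form in a de Rham class is itself part of the Hodge theorem.

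I do not expect a genuine obstacle here: the argument is entirely standard. The only points that need a moment of care are the claim that the curvature forms of connections on $L$ exhaust the whole affine space of closed $2$-forms in the class $[\Omega_0]$ (so that the infimum over connections really coincides with the infimum over that affine space), and the observation that the norm $\Vert\cdot\Vert_2$ in the statement is exactly the $L^2$ norm associated to $g$, which is what makes the Hodge orthogonality $\Omega_h\perp da$ available.
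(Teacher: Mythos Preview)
Your proposal is correct and follows essentially the same approach as the paper: both arguments use that any curvature form differs from the harmonic representative $\Omega_0$ by an exact form $d\alpha$, then invoke the orthogonality $\langle\Omega_0,d\alpha\rangle=\langle d^*\Omega_0,\alpha\rangle=0$ to obtain the Pythagorean identity $\Vert\Omega\Vert_2^2=\Vert\Omega_0\Vert_2^2+\Vert d\alpha\Vert_2^2$. Your version is slightly more detailed in justifying why the curvatures exhaust the full affine space and in eliminating the $d^*b$ term via the closedness of $\Omega$, but the core argument is identical.
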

\begin{proof}
  This fact is well-known but we recall the proof.  {Recall the
    de Rham class of the form $\Omega$ (representing  the first Chern
    class $c_1(L)=[\frac{1}{2\pi}\Omega]$ in $H^2(M,\bR)$)}
    contains a unique harmonic $2$-form $\Omega_0$, which (up to a
    factor of $2\pi$) is the curvature of a connection $A_0$ on $L$.
    Any other connection on $L$ is of the form $A_0+\alpha$, where
    $\alpha$ is an ordinary $1$-form, and has curvature $2$-form
    $\Omega=\Omega_0+d\alpha$.  We have for any $1$-form $\alpha$
\begin{equation*}
\langle \Omega_0,d\alpha\rangle = \langle d^*\Omega_0,\alpha\rangle
=0
\end{equation*}
since $d^*\Omega_0=0$ because $\Omega_0$ is harmonic.  Thus, by the geometry of inner
product spaces,
\begin{equation*}
\Vert \Omega\Vert_2^2=\Vert \Omega_0\Vert_2^2 +\Vert d\alpha\Vert_2^2
\end{equation*}
is minimized when $d\alpha=0$ and $\Omega = \Omega_0$ is harmonic. {(See
also  \cite[Lemma D.13]{lawson89:_spin}}.)
\end{proof}
\begin{remark}
  Clearly the condition $\Rgen_{(g,A)} = R_g - 2|\Omega|_{op}>0$ is not
directly a condition on the $2$-norm of $\Omega$.  However,
by \cite[Lemma 3.3]{MR1705917} and the fact that Clifford
multiplication by $\Omega$ is skew-symmetric,
one has the pointwise estimate
$|\Omega|_{op}\le \sqrt{\frac{n}{2}}|\Omega|_2$.  In the other direction,
one has a similar estimate
\begin{equation*}
|\Omega|_2^2 = \frac12 \Tr c(\Omega)^* c(\Omega)
\le \frac{n}{2}|\Omega|_{op}^2
\end{equation*}
and thus $|\Omega|_2\le \sqrt{\frac{n}{2}}|\Omega|_{op}$.
If $\Omega$ is harmonic, then $\Omega$ {has constant coefficients}
in {a local harmonic coordinate system}, and
thus $|\Omega|_2$ and $|\Omega|_{op}$ are constant
functions.  Furthermore, one always has
\begin{equation*}
\Vert \Omega\Vert^2_2 = \int_M |\Omega|_2^2\,d\vol,
\end{equation*}
which when $\Omega$ is harmonic becomes $\vol(M)\,|\Omega|_2^2$.
Thus in this case $\Vert\Omega\Vert_2=\vol(M)^{\frac12}|\Omega|_2$,
and if $\Vert\Omega\Vert_2$ is small enough, one can
use Lemma \ref{lem:2normcurvature} to replace the original
generalized psc connection by one with harmonic curvature. But the
estimates comparing $|\Omega|_{op}$ and $|\Omega|_2$ get worse and
worse as $n\to \infty$.
\end{remark}
Now we proceed to try to find an analogue of Theorem
\ref{thm:confclass}.  Let $(M,L)$ be a closed spin$^c$ manifold of
dimension $n\geq 3$ and $\cR^{\gen}(M,L)\neq \emptyset$. Let $\Omega$
be the curvature $2$-form on $A$. Define the modified conformal
Laplacian of $(M,L)$ equipped with a Riemannian metric $g$ and a
connection $A$ to be
\begin{equation}
\label{eq:twistedconfLap}
\sL_{(g,A)}:=-\frac{4(n-1)}{n-2}\Delta_g + \Rgen_{(g,A)} = \sL_g
- 2|\Omega|_{op}.
\end{equation}
Note of course that if $M$ is spin, the line bundle $L$ is trivial and
we can take $\Omega=0$, so then $\Rgen_{(g,A)} = R_g$ and
$\sL_{(g,A)}$ is just the usual conformal Laplacian $\sL_g$.  By
\cite[Theorem 3]{MR1162671} (even in the general spin$^c$ case), the
lowest eigenvalue $\mu_1(\sL_{(g,A)})$ of $\sL_{(g,A)}$ gives a lower
bound for the smallest eigenvalue of the square of the spin$^c$ Dirac
operator $D_L$:
\begin{equation}\label{eq:bar-estimate}
\lambda_1^2(D_L)\ge \frac{n}{4(n-1)}\mu_1(\sL_{(g,A)}).
\end{equation}
Let's show that $\Rgen_{(g,A)}\ge 0$ with $\Rgen_{(g,A)}$ positive somewhere
on $M$ implies that $\mu_1(\sL_{(g,A)})>0$.  The argument is the same
as for $\sL_g$.  Suppose $\Rgen_{(g,A)}\ge 0$ and $\Rgen_{(g,A)}$ is
positive somewhere.  Clearly $\sL_{(g,A)}\ge 0$, so that
$\mu_1(\sL_{(g,A)})\ge 0$. But suppose $\mu_1(\sL_{(g,A)})= 0$.
There is always positive eigenfunction for $\mu_1(\sL_{(g,A)})$,
so we have a strictly positive function $\phi$ in the kernel of
$\sL_{(g,A)}$.  We have
\[
0=\langle \sL_{(g,A)}\phi,\phi\rangle =
\frac{4(n-1)}{n-2}\int_M \langle \nabla\phi, \nabla \phi\rangle\,d\vol
+ \int_M \Rgen_{(g,A)}\phi^2\,d\vol,
\]
and both terms on the right are $\ge 0$, so they both vanish.
Vanishing of the first term shows that $\phi$ is a positive constant,
and then vanishing of the second term shows that $\Rgen_{(g,A)}\equiv 0$,
a contradiction.
{In particular, the inequality (\ref{eq:bar-estimate})
  impl{ies} that the Dirac operator $D_L$ is invertible.}

We denote by $\cR^{\gen, c}(M,L)$ the space of
pairs $(g,A)\in \cR(M)\times \cA(M,L)$ such that
$\mu_1(\sL_{(g,A)})>0$.  Now we consider what happens under pointwise
conformal deformation of the metric $g$. Note
that we don't change $h$ or $A$, so $\Omega$ is fixed, though the norm
$|\Omega|_{op}$ of $\Omega$ varies with metric, since it depends on
the Clifford algebra.  
\begin{lemma}
\label{lem:confinv2}
Let $(M,L)$ be a closed spin$^c$ manifold, $\dim M = n\geq 3$, and let
$(g,A)$ be a pair such that $\mu_1(\sL_{(g,A)})>0$. Then there exists a
conformal metric $g_1=u^{\frac{4}{n-2}}g$ such that $\Rgen_{(g_1,A)}>0$. Moreover,
the space $\cR^{\gen, c}(M,L)$ is invariant under pointwise conformal
changes of the metric, and for $(g,A)\in \cR^{\gen, c}(M,L)$, there is
a pointwise conformal metric $g_1$ with $\Rgen_{(g_1,A)}>0$
\end{lemma}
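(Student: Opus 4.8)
The plan is to reduce the whole statement to one conformal transformation identity for the generalized conformal Laplacian, after which the argument runs exactly parallel to the classical Kazdan--Warner analysis recalled in the discussion preceding Theorem \ref{thm:confclass}. Write $\alpha=1+\frac{4}{n-2}$ and, for a fixed positive $u\in C^\infty(M)$, put $g_1=u^{4/(n-2)}g$. The classical conformal covariance of the conformal Laplacian gives $\sL_{g_1}(\psi)=u^{-\alpha}\sL_g(u\psi)$ for all $\psi$, and in particular $R_{g_1}=u^{-\alpha}\sL_g(u)$. To handle the extra term I would first record how $|\Omega|_{op}$ transforms: a $g_1$-orthonormal coframe is obtained from a $g$-orthonormal one by multiplying by $u^{2/(n-2)}$, so the components of the fixed $2$-form $\Omega$ in a $g_1$-orthonormal coframe are $u^{-4/(n-2)}$ times its components in a $g$-orthonormal coframe; since the canonical identification of the spin$^c$ spinor bundles of $g$ and $g_1$ is a fiberwise isometry intertwining Clifford multiplication by frame vectors, the operator norm of $c(\Omega)$ with respect to $g_1$ equals $u^{-4/(n-2)}$ times the operator norm with respect to $g$. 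Using the identity $\alpha-\frac{4}{n-2}=1$, this yields
\begin{equation*}
\sL_{(g_1,A)}(\psi)=u^{-\alpha}\sL_g(u\psi)-u^{-\alpha}\cdot 2|\Omega|_{op,g}\,(u\psi)=u^{-\alpha}\sL_{(g,A)}(u\psi),
\end{equation*}
and in particular $\Rgen_{(g_1,A)}=u^{-\alpha}\sL_{(g,A)}(u)$.

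Granting this identity, the first assertion is immediate: if $\mu_1(\sL_{(g,A)})>0$ and $u>0$ is a first eigenfunction, $\sL_{(g,A)}u=\mu_1(\sL_{(g,A)})\,u$, then $g_1=u^{4/(n-2)}g$ satisfies $\Rgen_{(g_1,A)}=\mu_1(\sL_{(g,A)})\,u^{1-\alpha}>0$. The one technical wrinkle is that the potential $\Rgen_{(g,A)}=R_g-2|\Omega|_{op}$ is in general only continuous, since $|\Omega|_{op}$ (the sum of the absolute values of the ``eigenvalues'' of $\Omega$) need not be smooth; to produce a smooth $g_1$ I would first choose a smooth function $\Psi$ with $2|\Omega|_{op}\le\Psi$ and $\|\Psi-2|\Omega|_{op}\|_\infty$ small enough that $\mu:=\mu_1(\sL_g-\Psi)$ is still positive (possible since $\mu_1$ depends monotonically, and Lipschitz-continuously in the sup norm, on the potential). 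Its first eigenfunction $u$ is then smooth, $g_1=u^{4/(n-2)}g$ is a smooth metric, and $R_{g_1}=u^{-\alpha}\sL_g(u)=u^{-\alpha}\bigl((\sL_g-\Psi)u+\Psi u\bigr)=u^{1-\alpha}(\mu+\Psi)$, so that $\Rgen_{(g_1,A)}=R_{g_1}-u^{-4/(n-2)}\cdot 2|\Omega|_{op,g}=u^{-4/(n-2)}\bigl(\mu+\Psi-2|\Omega|_{op,g}\bigr)\ge u^{-4/(n-2)}\mu>0$.

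For the ``moreover'' I would show that the sign of $\mu_1(\sL_{(g,A)})$ is unchanged under a pointwise conformal change of $g$, which is precisely the invariance of $\cR^{\gen, c}(M,L)$. If $\mu_1(\sL_{(g,A)})>0$ with positive eigenfunction $u_0$, then for $g_1=u^{4/(n-2)}g$ the function $w=u_0/u>0$ satisfies $\sL_{(g_1,A)}w=u^{-\alpha}\sL_{(g,A)}(u_0)=\mu_1(\sL_{(g,A)})\,u^{1-\alpha}w>0$; pairing this inequality against the positive first eigenfunction of the self-adjoint operator $\sL_{(g_1,A)}$ and using self-adjointness forces $\mu_1(\sL_{(g_1,A)})>0$, and the reverse implication is symmetric. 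The final clause of the lemma is exactly what was established in the previous paragraph, applied to $g$, which by hypothesis lies in $\cR^{\gen, c}(M,L)$.

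I expect the only genuinely new point to be the conformal transformation law for $|\Omega|_{op}$, together with the bookkeeping about its regularity; once the exponents $\alpha$ and $\frac{4}{n-2}$ are seen to be compatible, everything else is the classical conformal-Laplacian machinery applied verbatim to the Schr\"odinger-type operator $\sL_{(g,A)}$.
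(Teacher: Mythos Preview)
Your proof is correct and follows the same core approach as the paper: derive the conformal transformation identity $\Rgen_{(g_1,A)}=u^{-\alpha}\sL_{(g,A)}(u)$ from the scaling $|\Omega|_{op,g_1}=u^{-4/(n-2)}|\Omega|_{op,g}$, then specialize $u$ to a positive first eigenfunction. The paper's proof does exactly this computation and stops there.

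You go further in two respects. First, you spell out the sign-invariance of $\mu_1(\sL_{(g,A)})$ under conformal change via the full operator covariance $\sL_{(g_1,A)}(\psi)=u^{-\alpha}\sL_{(g,A)}(u\psi)$ and a pairing argument; the paper states this invariance in the lemma but its proof never addresses it explicitly, relying instead on the reader to transport the classical Kazdan--Warner fact. Second, your observation that $|\Omega|_{op}$ is in general only continuous (it fails to be smooth where the singular values of $\Omega$ collide or vanish), and your fix via a smooth majorant $\Psi\ge 2|\Omega|_{op}$ with $\mu_1(\sL_g-\Psi)>0$, is a genuine point the paper does not raise: the paper simply treats the first eigenfunction of $\sL_{(g,A)}$ as producing a smooth conformal factor. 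Your smoothing argument is correct and yields an honestly smooth $g_1$, which is a small improvement over the paper's treatment.
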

\begin{proof}
  Let $u\in C^\infty(M)$, $u>0$, and let $g_1 = u^{\frac{4}{n-2}}g$.  We need
  to show that $\Rgen_{(g_1,A)}>0$. We have
  $R_{g_1}=u^{-\alpha}\sL_{g}(u)$, where $\alpha = 1+\frac{4}{n-2}$.
  If we rescale the metric by a constant $c>0$, the curvature is
  multiplied by $c^{-1}$, and $|\Omega|_{op}$ scales the same
  way as the curvature, so that computed with respect to the new metric
  $g_1$, $|\Omega|_{op}$ is multiplied by $u^{-\frac{4}{n-2}}$.  {Then we use
\eqref{eq:twistedconfLap} to compute $\Rgen_{(g_1,A)}$:}    
\begin{equation*}
  \begin{aligned}
    \Rgen_{(g_1,A)}=R_{g_1}-2|\Omega|_{op,g_1}
    &= u^{-\alpha}\sL_{g}(u)-2u^{-\frac{4}{n-2}}|\Omega|_{op,g}\\
    &=-\frac{4(n-1)}{n-2}u^{-\alpha}\Delta_g(u) +
    u^{-\alpha}R_gu - 2uu^{-\alpha}|\Omega|_{op,g}\\
    &=u^{-\alpha}{\sL_{(g,A)}}(u) . 
  \end{aligned}
\end{equation*}
Finally, suppose $(g,A)\in \cR^{\gen, c}(M,L)$, and let $\varphi$ be a strictly
  positive eigenfunction for the first eigenvalue $\mu_1(\sL_{(g,A)})>0$.
  Replace $g$ by the pointwise conformal metric $g_1=\varphi^{\frac{4}{n-2}}g$.
  Then
\begin{equation*}
  \begin{aligned}
    \Rgen_{(g_1,A)}=R_{g_1}-2|\Omega|_{op,g_1}
    &= \varphi^{-\alpha}\sL_{g}(\varphi)-2\varphi^{-\frac{4}{n-2}}|\Omega|_{op,g}\\
    &=-\frac{4(n-1)}{n-2}\varphi^{-\alpha}\Delta_g(\varphi) +
    \varphi^{-\alpha}R_g\varphi - 2\varphi\varphi^{-\alpha}|\Omega|_{op,g}\\
    &=\varphi^{-\alpha}\sL_{(g,A)}(\varphi)=\mu_1(\sL_{(g,A)})\varphi\varphi^{-\alpha}>0.
  \end{aligned}
\end{equation*}
  This concludes the proof.
\end{proof}

Now we have an analogue of Theorem \ref{thm:confclass}.
\begin{theorem}
\label{thm:gpscconfclass}
Let $(M,L)$ be a closed spin$^c$ manifold, $n\geq 3$, such that
$\cR^{\gen, c}(M,L)\neq \emptyset$. Then the inclusion
$\cR^{\gen}(M,L)\subset\cR^{\gen, c}(M,L)$ is homotopy equivalence.
\end{theorem}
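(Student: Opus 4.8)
The plan is to run the proof of Theorem~\ref{thm:confclass} almost verbatim, with $\sL_g$ replaced by the modified conformal Laplacian $\sL_{(g,A)}$ of \eqref{eq:twistedconfLap}, the classical conformal-change formula for scalar curvature replaced by Lemma~\ref{lem:confinv2}, and the conformal invariance of the sign of $\mu_1(\sL_g)$ replaced by the conformal invariance of the sign of $\mu_1(\sL_{(g,A)})$ (observed after \eqref{eq:bar-estimate}). First I would set up the base space. Both $\cR^{\gen}(M,L)$ and $\cR^{\gen, c}(M,L)$ are open subsets of $\cR(M)\times\cA(M,L)$ --- the former because $R_g-2|\Omega|_{op}$ is continuous in $(g,A)$, the latter because $\mu_1(\sL_{(g,A)})$ is --- and $\cR(M)\times\cA(M,L)$ is an open subset of a separable Fr\'echet space, so both have the homotopy type of a countable CW complex by \cite[Theorem~1]{MR100267}. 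The contractible Fr\'echet Lie group $C^\infty_+(M)$ acts smoothly and freely on $\cR(M)\times\cA(M,L)$ by $f\cdot(g,A)=(fg,A)$; conformal invariance of the sign of $\mu_1$ shows this action preserves $\cR^{\gen, c}(M,L)$, and I set $\cC^{\gen}(M,L):=C^\infty_+(M)\backslash\cR^{\gen, c}(M,L)$, the set of pairs $([g],A)$ with $[g]$ a pointwise conformal class and $\mu_1(\sL_{(g,A)})>0$, with the quotient topology. Exactly as in Theorem~\ref{thm:confclass}, $\cC^{\gen}(M,L)$ is Hausdorff and paracompact, so $p\co\cR^{\gen, c}(M,L)\to\cC^{\gen}(M,L)$ is a principal $C^\infty_+(M)$-bundle, hence a homotopy equivalence admitting a section $s_1$.

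Next I would upgrade $s_1$ to a section $s\co\cC^{\gen}(M,L)\to\cR^{\gen}(M,L)$ of $p$. Writing $s_1(C)=(g,A)$, one takes the strictly positive first eigenfunction $\varphi_C$ of $\sL_{(g,A)}$, normalized to have maximum value $1$, and sets $s(C)=(\varphi_C^{4/(n-2)}g,\,A)$; by the computation in the proof of Lemma~\ref{lem:confinv2}, $\Rgen_{s(C)}=\varphi_C^{-\alpha}\sL_{(g,A)}(\varphi_C)=\mu_1(\sL_{(g,A)})\,\varphi_C^{1-\alpha}>0$ (with $\alpha=1+\frac{4}{n-2}$), so $s(C)\in\cR^{\gen}(M,L)$ and $p\circ s=\id$. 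Exactly as in Theorem~\ref{thm:confclass}, the fibers of $p$ restricted to $\cR^{\gen}(M,L)$ are ``convex'': with $s(C)=(g_0,A)$, any other point of the fiber is $(g_1,A)$ with $g_1=u^{4/(n-2)}g_0$ for a unique $u\in C^\infty_+(M)$, and the path $g_t=u_t^{4/(n-2)}g_0$, $u_t=(1-t)+tu$, stays in $\cR^{\gen}(M,L)$ since, by Lemma~\ref{lem:confinv2} together with the linearity of $\sL_{(g_0,A)}$,
\[
\Rgen_{(g_t,A)}=u_t^{-\alpha}\sL_{(g_0,A)}(u_t)=u_t^{-\alpha}\bigl((1-t)\Rgen_{(g_0,A)}+t\,u^{\alpha}\Rgen_{(g_1,A)}\bigr)>0,
\]
where I used $\sL_{(g_0,A)}(1)=\Rgen_{(g_0,A)}>0$ and $\sL_{(g_0,A)}(u)=u^{\alpha}\Rgen_{(g_1,A)}>0$. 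Running this homotopy from a general $(g_1,A)$ down to $s(p(g_1,A))$ gives a deformation retraction of $\cR^{\gen}(M,L)$ onto $s(\cC^{\gen}(M,L))$, so $s$ and $p|_{\cR^{\gen}(M,L)}$ are mutually inverse homotopy equivalences. Since $p$ is a homotopy equivalence and $p|_{\cR^{\gen}(M,L)}=p\circ\iota$ for the inclusion $\iota\co\cR^{\gen}(M,L)\hookrightarrow\cR^{\gen, c}(M,L)$, the two-out-of-three property forces $\iota$ to be a homotopy equivalence.

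The step I expect to be the main obstacle --- the only genuine departure from Theorem~\ref{thm:confclass} --- is the regularity of the section $s$. The potential $\Rgen_{(g,A)}=R_g-2|\Omega|_{op}$ of $\sL_{(g,A)}$ is in general only Lipschitz in $x$: the pointwise operator norm $|\Omega|_{op}$ is a norm-type function of $\Omega(x)$ (the sum of the absolute values of the angles of $\Omega(x)$) and need not be $C^1$ where the rank of $\Omega(x)$ drops, in particular at zeros of $\Omega$. Hence elliptic regularity yields only $\varphi_C\in C^{2,\alpha}$, and $\varphi_C^{4/(n-2)}g$ need not be a smooth metric. I would remedy this by replacing $|\Omega|_{op}$ by a smooth $x$-function $\widetilde\Omega\ge|\Omega|_{op}$ with $\|\widetilde\Omega-|\Omega|_{op}\|_\infty$ small --- for instance a mollification of $|\Omega|_{op}$ plus a small positive constant, with mollification scale and constant chosen to depend continuously on $(g,A)$. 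Then $\mu_1(\sL_g-2\widetilde\Omega)>0$ as soon as $\|\widetilde\Omega-|\Omega|_{op}\|_\infty<\tfrac12\mu_1(\sL_{(g,A)})$, the first eigenfunction $\widetilde\varphi_C$ of $\sL_g-2\widetilde\Omega$ is now smooth and depends continuously on $C$, and by Lemma~\ref{lem:confinv2},
\[
\Rgen_{(\widetilde\varphi_C^{4/(n-2)}g,\,A)}=\widetilde\varphi_C^{-\alpha}\bigl(\mu_1(\sL_g-2\widetilde\Omega)\,\widetilde\varphi_C+2(\widetilde\Omega-|\Omega|_{op})\,\widetilde\varphi_C\bigr)>0,
\]
so $s(C):=(\widetilde\varphi_C^{4/(n-2)}g,A)$ is a smooth metric in $\cR^{\gen}(M,L)$, lying in the conformal class of $g$ and depending continuously on $C$. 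With this substitution the argument of the previous paragraph goes through; one also checks that the deformation-retraction homotopy $H((g_1,A),t)=(g_t,A)$ is continuous, which it is because both the metric part of $s(p(g_1,A))$ and the conformal factor $u$ relating it to $g_1$ depend continuously on $(g_1,A)$.
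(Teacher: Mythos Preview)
Your approach is essentially the same as the paper's: the paper's proof simply carries out the convexity computation for the fibers (your displayed formula $\Rgen_{(g_t,A)}=u_t^{-\alpha}\bigl((1-t)\Rgen_{(g_0,A)}+t\,\sL_{(g_0,A)}(u)\bigr)>0$) and then says ``by exactly the same argument as in the proof of Theorem~\ref{thm:confclass}'', without spelling out the principal-bundle and section-construction steps that you reproduce in full.

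Where you go beyond the paper is in flagging the regularity of the section $s$. You are right that $|\Omega|_{op}(x)=\sum_k|\lambda_k(x)|$ (with $\pm i\lambda_k$ the eigenvalues of the skew endomorphism associated to $\Omega(x)$) is in general only Lipschitz, so the potential of $\sL_{(g,A)}$ need not be smooth and the first eigenfunction $\varphi_C$ is a priori only $C^{2,\alpha}$; the paper's proof does not address this, tacitly treating the eigenfunction as smooth when it invokes the argument of Theorem~\ref{thm:confclass}. Your smoothing fix---replacing $|\Omega|_{op}$ by a nearby smooth majorant $\widetilde\Omega$ so that the eigenfunction of $\sL_g-2\widetilde\Omega$ is smooth, and checking that the resulting conformal change still lands in $\cR^{\gen}(M,L)$ via the identity $\Rgen=\widetilde\varphi_C^{-\alpha}\bigl(\mu_1\widetilde\varphi_C+2(\widetilde\Omega-|\Omega|_{op})\widetilde\varphi_C\bigr)$---is a clean and correct way to repair this, and the continuous dependence on $(g,A)$ can indeed be arranged. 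So your proof is the paper's proof with a genuine gap filled.
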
  
\begin{proof}
  We proceed as in the proof of Theorem \ref{thm:confclass}.  Let
  $(g,A)\in \cR^{\gen}(M,L)$, i.e., $\Rgen_{(g,A)}=R _g - 2|\Omega|_{op}>0$,
  and let $g_1=u^{\frac{4}{n-2}}g_0$ be pointwise conformal to $g$,
  for some $u\in C^\infty_+(M)$. We have a homotopy $u_t = (1-t)+tu$,
  $g_t=u_t^{\frac{4}{n-2}}g_0$, $0\le t\le 1$, where $u_t\in
  C^\infty_+(M)$. By the formula for scalar curvature of a conformal
  deformation of a metric, $R_{g_t}=u_t^{-\alpha}\sL_{g_0}(u_t)$,
  where $\alpha = \frac{n+2}{n-2}$.  But by linearity of $\sL_{g_0}$,
  $\sL_{g_0}(u_t)=(1-t)\sL_{g_0}(1)+t\sL_{g_0}(u) =
  (1-t)R_g+t\sL_{g_0}(u)$.  So
\begin{equation*}
  \begin{aligned}
  \Rgen_{(g_t,A)}=R_{g_t} - 2|\Omega|_{op,g_t} &=
  u_t^{-\alpha}(1-t)R_g+u_t^{-\alpha}t\sL_{g_0}(u)-2|\Omega|_{op,g}u_tu_t^{-\alpha}\\
  &=u_t^{-\alpha}\bigl((1-t)R_g + t\sL_{g_0}(u) -2|\Omega|_{op,g}(1-t)
  -2|\Omega|_{op,g}tu\bigr)\\
  &= u_t^{-\alpha}\bigl((1-t)\Rgen_{(g,A)} + t\sL_{(g_0,A)}(u)\bigr)>0,
  \end{aligned}
\end{equation*}
  where $\kappa=\Rgen_{(g_0,A)}$.
  This shows that the fibers of the projection to pointwise conformal
  classes are convex, and so the projection map is a homotopy
  equivalence{, by exactly the same argument as in the proof of
    Theorem \ref{thm:confclass}}.
\end{proof}

Now we can effectively get rid of having to worry about the connection
on the line bundle by the following trick.  Start with a pair $(g,A)$
with $\Rgen_{(g,A)}>0$.  By Theorem \ref{thm:gpscconfclass} and the
solution of the Yamabe problem, we can make a pointwise conformal
change of the metric $g$ to a Riemannian metric $\check{g}$ with
constant positive scalar curvature, and still stay in the space
$\cR^{\gen, c}(M,L)$.  Now replace $g$ by $\check{g}$.  The curvature
$2$-form $\Omega$ must have the form $\Omega_0 + d\alpha$, where
$\Omega_0$ is the unique \emph{harmonic} $2$-form (with respect to the
metric $\check{g}$) in the de Rham class of $\Omega$.  In harmonic
coordinates (with respect to the metric $\check{g}$), $\Omega_0$ is a
$2$-form with constant coefficients, so its pointwise norm
$|\Omega_0|_{op,\check{g}}$ is a constant function.  Thus if
$|\Omega|_{op,\check{g}}$ is \emph{not} constant, replacing $\Omega$
by $\Omega_0$ (which corresponds to replacing the initial connection
$A$ by a connection $A_0$) lowers the maximum value of the pointwise
operator norm, and replaces $\Rgen_{(\check{g},A)}$ by a constant
$\Rgen_{(\check{g},A_0)}= R_{\check{g}} - 2|\Omega_0|_{op,\check{g}}$,
thereby increasing its minimum value.  Since $\sL_{(\check{g},A_0)}$ is
positive, this constant must be positive.

Thus it is no loss
of generality to assume that the curvature $2$-form is harmonic, and thus
determined by its Chern class and the Riemannian metric.
{%
\begin{remark}
We note that the aforementioned argument is not applicable to an
arbitrary family of initial metrics, $g_{\upsilon}$, $\upsilon\in\Upsilon$,
because the Yamabe problem generally does not admit a unique solution.
\end{remark}  
}
\section{A Trichotomy Theorem}
\label{sec:trichotomy}
In the last section, we saw that there is a close analogy between
generalized scalar curvature on a spin$^c$ manifold and scalar curvature
in the spin case.
We state again a trichotomy theorem for spin$^c$ manifolds.
Our corresponding theorem is the following:
\begin{theorem}
\label{thm:trichotomy}
  Every closed connected spin$^c$ manifold
  $(M,L)$ of dimension at least three falls into exactly one of the
  following three classes:
  \begin{enumerate}
  \item those admitting a pair $(g,A)$ with {$\Rgen_{(g,A)}>0$}, in which
    case every smooth function on $M$ is the generalized scalar
    curvature of some pair $(g',A')$, $g'$ a Riemannian metric on $M$ and
    $A'$ a connection on $L$;
  \item those admitting a pair $(g,A)$ with {$\Rgen_{(g,A)}\ge 0$} but not
    a pair $(g,A)$ with {$\Rgen_{(g,A)}$} positive everywhere --- in this
    case a smooth function on $M$ is
    the generalized scalar curvature of some pair $(g',A')$ if and
    only if it's either identically $0$ or else negative somewhere;
  \item all other manifolds, those not admitting any $(g,A)$ with
    {$\Rgen_{(g,A)}\ge 0$} --- in this case, a smooth function on $M$ 
    is the generalized scalar curvature of some pair $(g',A')$ if and only if
    it is negative somewhere on $M$.  
  \end{enumerate}
\end{theorem}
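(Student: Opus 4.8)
The plan is to reduce the theorem to the Kazdan--Warner theorem (Theorem~\ref{thm:KW-int}) by systematically replacing the conformal Laplacian $\sL_g$ with the generalized conformal Laplacian $\sL_{(g,A)}$. I would fix once and for all a connection $A_0$ on $L$ with curvature $2$-form $\Omega_0$. For a metric $g$ and the pointwise conformal deformation $g_u=u^{\frac{4}{n-2}}g$, the computation in the proof of Lemma~\ref{lem:confinv2} gives $\Rgen_{(g_u,A_0)}=u^{-\alpha}\sL_{(g,A_0)}(u)$ with $\alpha=\frac{n+2}{n-2}$. Hence prescribing $\Rgen_{(\,\cdot\,,A_0)}=f$ inside the conformal class of $g$ is exactly the problem of solving the Kazdan--Warner equation $\sL_{(g,A_0)}u=f\,u^{\alpha}$, $u>0$, for the Schr\"odinger operator $\sL_{(g,A_0)}=-\tfrac{4(n-1)}{n-2}\Delta_g+\Rgen_{(g,A_0)}$, whose potential $\Rgen_{(g,A_0)}=R_g-2|\Omega_0|_{op}$ is a smooth function. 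Since $\sL_{(g,A_0)}$ differs from the usual conformal Laplacian only by subtraction of a smooth function, all of Kazdan--Warner's conformal-deformation results carry over verbatim, with the sign of $\mu_1(\sL_{(g,A_0)})$ (a conformal invariant of $g$, by the displayed identity) replacing the Yamabe sign $\operatorname{sign}\mu_1(\sL_g)$.

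Next I would pin down the partition. By the argument in Section~\ref{sec:genpsc} showing that $\Rgen_{(g,A)}\ge0$ and positive somewhere forces $\mu_1(\sL_{(g,A)})>0$, together with Lemma~\ref{lem:confinv2}, the three classes of the statement are respectively: (1) $\mu_1(\sL_{(g,A)})>0$ for some $(g,A)$; (2) not (1), but $\Rgen_{(g,A)}\equiv0$ for some $(g,A)$; (3) there is no $(g,A)$ with $\Rgen_{(g,A)}\ge0$. These are visibly pairwise disjoint and cover every $(M,L)$, so ``exactly one'' is immediate, and it remains to describe the realizable functions in each case.

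For the functions that are negative somewhere I would use that every closed $n$-manifold, $n\ge3$, carries a metric $\bar g$ of constant negative scalar curvature (Aubin; see e.g.\ \cite{MR365409}); then $\Rgen_{(\bar g,A_0)}=R_{\bar g}-2|\Omega_0|_{op}<0$, so $\mu_1(\sL_{(\bar g,A_0)})<0$, and the negative case of Kazdan--Warner applied to $\sL_{(\bar g,A_0)}$ realizes every such $f$ as $\Rgen_{(g',A_0)}$ for a metric $g'$ conformal to $\bar g$. This settles all three classes for such $f$, hence the whole realizability statement in classes (2) and (3) except for $f\equiv0$: that is realizable in class (2) by the very definition of class (2), and impossible in class (3) since $\Rgen_{(g',A')}\equiv0$ would contradict that definition. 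In class (1), $f\equiv0$ is obtained by a continuity argument: join a pair $(g_0,A_0)$ with $\mu_1(\sL_{(g_0,A_0)})>0$ to $(\bar g,A_0)$ by a path $(g_t,A_0)$ in the convex space of metrics; since $t\mapsto\mu_1(\sL_{(g_t,A_0)})$ is continuous it vanishes at some $t_*$, and rescaling $g_{t_*}$ by a positive ground state $\varphi$ yields, as in Lemma~\ref{lem:confinv2}, $\Rgen_{(\varphi^{4/(n-2)}g_{t_*},A_0)}=\varphi^{-\alpha}\sL_{(g_{t_*},A_0)}(\varphi)=0$.

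The remaining and genuinely substantial point is class (1) for $f$ positive somewhere; if such an $f$ is also negative somewhere it is already covered, so the essential case is $f\ge0$, $f\not\equiv0$. Here one must leave the conformal class, and I would follow Kazdan--Warner's proof of Theorem~\ref{thm:KW-int}(1): if $c_1(L)$ is torsion one may take $\Omega_0=0$, so $\Rgen_{(g,A_0)}=R_g$ and the statement is literally Theorem~\ref{thm:KW-int}(1); in general one reruns Kazdan--Warner's construction with $R_g$ replaced throughout by $\Rgen_{(g,A_0)}$, which is legitimate because the only non-conformal metric-dependence enters through the smooth potential $\Rgen_{(g,A_0)}=R_g-2|\Omega_0|_{op}$, and in particular local modifications that drive $R_g$ negative on a small set also drive $\Rgen_{(g,A_0)}$ negative there. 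Finally, non-realizability in classes (2) and (3) is as before: $\Rgen_{(g',A')}=f$ with $f\ge0$ and $f>0$ somewhere forces $\mu_1(\sL_{(g',A')})>0$, i.e.\ membership in class (1). The hard part will be precisely this last step --- transporting the Kazdan--Warner prescription theorem outside a fixed conformal class to the operator $\sL_{(g,A_0)}$ --- since everything else transfers mechanically from the fact that $\sL_{(g,A_0)}$ is a Schr\"odinger operator differing from $\sL_g$ by a smooth function.
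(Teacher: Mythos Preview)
Your outline is largely correct and closely tracks the paper's proof: both reduce everything to the Kazdan--Warner machinery for the Schr\"odinger operator $\sL_{(g,A)}$, use a metric $\bar g$ of constant negative scalar curvature to handle functions negative somewhere, and distinguish the classes via the sign of $\mu_1(\sL_{(g,A)})$. Your continuity argument for $f\equiv 0$ in class~(1) is exactly what the paper invokes (it cites the corresponding Kazdan--Warner theorems rather than spelling it out).

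There is one inaccuracy. You assert that every $f$ negative somewhere is realized as $\Rgen_{(g',A_0)}$ with $g'$ \emph{conformal to} $\bar g$. This is only correct when $f<0$ everywhere: Kazdan--Warner's existence result for $\sL u = fu^{\alpha}$ with $\mu_1(\sL)<0$ requires $f$ of constant sign. When $f$ changes sign, one must first precompose by a diffeomorphism $\phi$ so that $\int_M \phi^*(f)\,d\vol_{\bar g}<0$, solve conformally for $\phi^*(f)$, and then pull back. The paper carries this out explicitly in the proof of Theorem~\ref{ref:neggsc}; the resulting metric is conformal to $(\phi^{-1})^*\bar g$, not to $\bar g$.

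The substantive difference is in the case you correctly flag as hard: $f\ge 0$ nonconstant in class~(1). You propose to fix $A_0$ and ``rerun Kazdan--Warner's construction,'' but the paper exploits precisely the freedom you give up. Starting from $(g,A)$ with $\Rgen_{(g,A)}>0$, it passes via the Yamabe problem to a conformal $\check g$ of constant positive scalar curvature, and then \emph{replaces} $A$ by the connection whose curvature is the $\check g$-harmonic representative of the Chern class. This makes $|\Omega|_{op,\check g}$ constant, hence $\Rgen_{(\check g,A)}$ is a positive \emph{constant}; rescaling then realizes any positive constant, and the $\min f < c < \max f$ trick (Kazdan--Warner's Lemma~6.1, again via a diffeomorphism) handles the nonconstant case. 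Your fixed-$A_0$ route can be made to work by rescaling a positive $\Rgen_{(g_0,A_0)}$ until its range meets $(\min f,\max f)$ and then invoking the same lemma, but this should be said explicitly; the remark about ``local modifications driving $R_g$ negative'' is true but is not what is actually used here. Note also that your ``fix $A_0$ once and for all'' must, in class~(1), mean choosing $A_0$ to be a connection for which positivity is already achieved --- otherwise there is no guarantee that $\mu_1(\sL_{(g,A_0)})>0$ for any $g$.
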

Note that in all three cases, any smooth function which is negative somewhere
can be { a {generalized scalar} curvature $\Rgen_{(g,A)}$}
for some $(g,A)$.  This is 
Theorem \ref{ref:neggsc} below.  Rigidity aspects of case (2) of
Theorem \ref{thm:trichotomy} will be discussed further in section
\ref{sec:rigidity}.  The proof of Theorem \ref{thm:trichotomy} will be
broken up in several steps.  The first step, helping to distinguish
between cases (1) and (2), was already mentioned in
section \ref{sec:genpsc}: if $\Rgen_{g,A}\ge 0$ and is not identically
$0$, then $\mu_1(\sL_{(g,A)})>0$, and so by pointwise conformal
deformation of the metric from $g$ to $g'$, one can arrange to make
$\Rgen_{(g',A)}>0$ everywhere.  The next step is the following,
which clearly takes care of statements \textup{(2)} and \textup{(3)} in
Theorem \textup{\ref{thm:trichotomy}}.  The argument here is
closely modeled on \cite{MR365409}.
\begin{theorem}
\label{ref:neggsc}
Let $(M,L)$ be a connected {closed} spin$^c$ manifold
of dimension $n\ge 3$ and let $\kappa$ be a smooth function on $M$
taking a negative value somewhere on $M$.  Then $\kappa$ is the
generalized scalar curvature of some pair $(g,A)$, $g$ a
Riemannian metric on $M$ and $A$ a connection on $L$.
\end{theorem}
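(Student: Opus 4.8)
The plan is to reduce the assertion, by a pointwise conformal change of the metric, to the Kazdan--Warner existence theorem for the semilinear elliptic equation governing prescribed scalar curvature in a conformal class \cite{MR365409,MR0394505,MR0375154}, and to use the freedom in the connection $A$ to place ourselves in the favorable case of that theorem, where the relevant (generalized) conformal Laplacian has negative principal eigenvalue.

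First I would fix an auxiliary metric $g_0$ on $M$ and a connection $A$ on $L$, and recall from the proof of Lemma \ref{lem:confinv2} (compare Theorem \ref{thm:gpscconfclass}) that for $u\in C^\infty_+(M)$ and $g=u^{\frac{4}{n-2}}g_0$ one has $\Rgen_{(g,A)}=u^{-\alpha}\sL_{(g_0,A)}(u)$, with $\alpha=\tfrac{n+2}{n-2}$ and $\sL_{(g_0,A)}=-\tfrac{4(n-1)}{n-2}\Delta_{g_0}+\Rgen_{(g_0,A)}$. Thus it suffices to produce a connection $A$ and a positive smooth function $u$ solving $\sL_{(g_0,A)}(u)=\kappa\,u^{\alpha}$, for then $(g,A)=\bigl(u^{\frac{4}{n-2}}g_0,\,A\bigr)$ has $\Rgen_{(g,A)}=\kappa$. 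Next I would arrange $\mu_1(\sL_{(g_0,A)})<0$ by enlarging the connection curvature: fix a coordinate ball $B'\subset M$ and, for $t>0$, replace $A$ by $A_t=A+it\gamma$, where $\gamma$ is a real $1$-form supported near $B'$ with $d\gamma=dx^1\wedge dx^2$ on $B'$; the curvature then becomes $\Omega_t=\Omega+t\,d\gamma$, whose pointwise operator norm on $B'$ satisfies $|\Omega_t|_{op,g_0}\ge c_0 t-C\to\infty$, so evaluating the Rayleigh quotient of $\sL_{(g_0,A_t)}=\sL_{g_0}-2|\Omega_t|_{op,g_0}$ on a fixed bump function supported in $B'$ forces $\mu_1(\sL_{(g_0,A_t)})<0$ once $t$ is large. (This is the spin$^c$ substitute for the classical fact that every closed manifold of dimension $\ge 3$ carries a metric with $\mu_1(\sL_g)<0$, which in the spin$^c$ case is immediate.) Fix such a $t$ and write $A:=A_t$, $V:=\Rgen_{(g_0,A_t)}$.

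It then remains to solve $-\tfrac{4(n-1)}{n-2}\Delta_{g_0}u+Vu=\kappa u^{\alpha}$ for $u>0$, knowing that $\mu_1\bigl(-\tfrac{4(n-1)}{n-2}\Delta_{g_0}+V\bigr)<0$ and that $\kappa$ is negative somewhere. This is precisely the equation handled in the Kazdan--Warner conformal deformation theorem, and their proof uses only the Schr\"odinger structure of the operator together with the sign of its principal eigenvalue, not the fact that $V$ is literally a scalar curvature; so I would invoke that theorem directly, or, following \cite{MR365409}, reproduce its proof in the present setting. Concretely: writing $\phi_1>0$ for a principal eigenfunction of $\sL_{(g_0,A)}$, the function $\varepsilon\phi_1$ is a subsolution for every sufficiently small $\varepsilon>0$; when $\kappa<0$ everywhere a large constant is a matching supersolution and the method of sub- and super-solutions yields a positive solution; for $\kappa$ that is nonnegative somewhere one must instead construct a non-constant supersolution, large over the compact set $\{\kappa\ge 0\}$. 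Elliptic regularity then upgrades $u$ to $C^\infty$ and the strong maximum principle gives $u>0$; the pair $(g,A)=\bigl(u^{\frac{4}{n-2}}g_0,\,A\bigr)$ then has $\Rgen_{(g,A)}=\kappa$.

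The hard part will be exactly the sign-changing case of the last step --- the construction of a supersolution over the region where $\kappa\ge 0$. The conformal reduction and the step forcing $\mu_1<0$ are routine given the machinery of Section \ref{sec:genpsc}; all of the analytic content is concentrated in the Kazdan--Warner equation, and that is the piece the proof must carry over carefully from \cite{MR365409}.
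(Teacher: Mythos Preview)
Your overall strategy matches the paper's: reduce via the conformal transformation law $\Rgen_{(g,A)}=u^{-\alpha}\sL_{(g_0,A)}(u)$ to a Kazdan--Warner type equation, arrange $\mu_1(\sL_{(g_0,A)})<0$, and then appeal to the Kazdan--Warner machinery. One difference is how $\mu_1<0$ is obtained. The paper simply takes $g_0$ with constant negative scalar curvature (which exists in dimension $\ge 3$ by \cite{MR365409}); then $\Rgen_{(g_0,A)}=R_{g_0}-2|\Omega|_{op}<0$ everywhere for \emph{any} connection $A$, so $\mu_1<0$ is automatic. Your curvature-blowup trick for the connection is correct and is a genuinely spin$^c$ idea, but it is not needed.

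The real gap is in the sign-changing case. You propose to ``construct a non-constant supersolution, large over $\{\kappa\ge 0\}$,'' but this is not how Kazdan--Warner actually treat sign-changing right-hand sides, and no such direct supersolution construction is available in general. Their PDE existence results (e.g.\ Theorem~2.11 and Lemma~2.15 of \cite{MR365409}) require an integral smallness condition on the positive part of $\kappa$ that can fail for the given $\kappa$; they enforce it by first precomposing with a diffeomorphism $\phi$ so that $\int_M\phi^*(\kappa)\,d\vol_{g_0}<0$, then solving $\sL_{(g_0,A)}(u)=\phi^*(\kappa)\,u^{\alpha}$ in the fixed conformal class, and finally pulling back by $\phi^{-1}$. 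The paper makes this step explicit and notes the spin$^c$ wrinkle: the pullback acts on the connection as well, so the resulting pair is $\bigl((\phi^{-1})^*g,\,(\phi^{-1})^*A\bigr)$, not $(u^{4/(n-2)}g_0,\,A_t)$ as you wrote. In short, the missing idea in your sign-changing case is the diffeomorphism step; once you include it (and let it move $A$ along with $g$), your argument goes through and coincides with the paper's.
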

\begin{proof}
  By \cite[Lemma 3.6 and Theorem 4.1]{MR365409} or
  \cite[Lemma 3(a)]{MR0375154}, $M$ always has a metric $g_0$ with constant
  negative scalar curvature.  Choose any connection $A$ on $L$.
  Then $\Rgen_{(g_0,A)}$ is strictly negative everywhere.  Consider the
  twisted conformal Laplacian \eqref{eq:twistedconfLap}
  $\sL_{(g_0,A)}$.  The lowest eigenvalue $\mu_1(\sL_{(g_0,A)})$ of
  this operator must be negative, since we have
\begin{equation*}
  \langle \sL_{(g_0,A)}(1), 1\rangle = \int_M\Rgen_{(g_0,A)}\,d\vol_{g_0}
  < 0.
\end{equation*}
  (In fact the same calculation shows that
\begin{equation*}
  \int_M\Rgen_{(g,A)}\,d\vol_g < 0
\end{equation*}
  for any $(g,A)$ will imply that $\mu_1(\sL_{(g_0,A)})<0$.)

{In order to make the arguments below transparent, let us briefly
  recall a relevant result by Kazdan-Warner from \cite[Section 2]{MR365409}. 
  They consider a general equation}
  \begin{equation}\label{eq:KW2.2}
{    -\mathbf{L}u = \Delta u - h u = - Hu^{\alpha} \ ,}
  \end{equation}
  {where $h$ and $H$ are prescribed functions and $\alpha>1$ is a
    constant. In our case we assume $h$ and $H$ are both smooth. We
    denote by $\mu_1(\mathbf{L})$ the first eigenvalue of
    $\mathbf{L}$, and by $u_1>0$ the corresponding eigenfunction. One
    can observe that if there exists a solution $u>0$ of
    (\ref{eq:KW2.2}), then the signs of $\mu_1(\mathbf{L})$ and $H$
    are the same, provided $H$ is {never} zero; see \cite[Lemma
      2.5]{MR365409}. Then the following result holds:
  \begin{proposition}\label{propK-W2.11}{\rm \cite[Theorem 2.11(b)]{MR365409}}
    Assume {that} $H<0${; then a}
    positive solution of \eqref{eq:KW2.2} exists if and
only if $\mu_1(\mathbf{L})<0$. 
  \end{proposition}  
The idea to construct the solution $u$ is to use \emph{upper} and
\emph{lower} solutions $u_+$ and $u_-$. In the case {at hand}, $u_+=$
large constant, and $u_-=\epsilon u_1$, where $u_1$ is the first
eigenfunction: $\mathbf{L}u_1=\mu_1(\mathbf{L})u_1$.}
  \vspace{3mm}
  
Now we continue with the proof of Theorem \ref{ref:neggsc}.  First
suppose that {the function} $\kappa$ is everywhere
{negative}. By \cite[Theorem 2.11(b)]{MR365409} {(or by
  Proposition \ref{propK-W2.11})}, there is a {smooth function}
$u>0$ such that with $g=u^{\frac{4}{n-2}}g_0$, $(g, A)$ has
$\Rgen_{(g,A)}=\kappa$, since by the calculation in the proof of Lemma
\ref{lem:confinv2}, if we let $\alpha=\frac{n+2}{n-2}$, then we need
to solve {the equation $L_{(g_0,A)} (u) = \kappa u^{\alpha}$,
  which is the same as}
\begin{equation*}
  \Rgen_{(g,A)}=u^{-\alpha}L_{(g_0,A)} (u) = \kappa.
\end{equation*}
Then the existence of the solution $u$ follows from Proposition
\ref{propK-W2.11}.

  The case where $\kappa$ is allowed to be {non-negative} on some
  subset of $M$ is more delicate, since a pointwise conformal
  deformation of $g_0$ may not suffice. {In} this case we show
  {first} that after moving $\kappa$ around by a diffeomorphism{,
    we can make it} very negative on a ``large'' portion of $M$.

{Let $g_0$ be the same metric on $M$ with constant negative scalar
curvature. We choose a diffeomorphism $\phi\co M\to M$ such that 
\begin{equation}\label{eq:int-negative}
\int_M\phi^*(\kappa)\,d\sigma_{g_0} < 0.  
\end{equation}
This will suffice since if
  $\Rgen_{(g,A)}=\phi^*(\kappa)$ for some diffeomorphism $\phi$ of
  $M$, then
\begin{equation*}
  \Rgen_{\bigl((\phi^{-1})^*g,(\phi^{-1})^*(A)\bigr)}=\kappa.  
\end{equation*}
It is not so difficult to arrange a diffeomorphism $\phi$ such that
the inequality \eqref{eq:int-negative} holds. Choose a constant $C_->0$
such that $\kappa(x)<-C_-$ for some $x\in M$, and find a diffeomorphism
$\phi$ such that the image $\phi(\{\kappa < -C_-\})$ has almost the same
$g_0$-volume as the $g_0$-volume of $M$. If, say,
\begin{equation*}
\Vol_{g_0}(M)-\Vol_{g_0}(\phi(\{\kappa < -C_-\}))=\epsilon
\end{equation*}
and $\kappa\leq C_+$, then 
\begin{equation}\label{eq:int-negative1}
\int_M\phi^*(\kappa)\,d\sigma_{g_0} < (-C_-+\epsilon \cdot C_+)\Vol_{g_0}(M) <0  
\end{equation}
for small enough $\epsilon>0$.  We note that the condition
(\ref{eq:int-negative}) means that the function $\phi^*(\kappa)$ is
``not too positive too often'', see \cite[p.\ 124 and Lemma
  2.15]{MR365409}. This fact, combined with the condition that
$\mu_1(\sL_{(g_0,A)}) < 0$, guarantees the existence of a smooth,
positive function $u$ satisfying the equation $\sL_{(g_0,A)}u =
\phi^*(\kappa)u^{\alpha}$. Thus we obtain
$\Rgen_{(g,A)}=\phi^*(\kappa)$ for suitable $\kappa$ and $u>0$, with
$g=u^{\frac{4}{n-2}}g_0$, follows by the same continuity argument as
in \cite[\S4]{MR365409}.}
\end{proof}
%
\begin{proof}[End of the proof of {Theorem \ref{thm:trichotomy}}]
To finish the proof of Theorem \ref{thm:trichotomy}, we need to show
that if $\kappa\ge 0$ and $(M,L)$ admits a pair $(g,A)$ with
$\Rgen_{(g,A)}>0$, then $\kappa$ can be realized as the generalized
scalar curvature of some pair $(g',A')$ on $(M,L)$.  (The case where
$\kappa$ is negative somewhere is already covered by Theorem
\ref{ref:neggsc}.)

By the argument at the end of section \ref{sec:genpsc}, the assumption
implies that $(M,L)$ admits a pair $(\check{g},A)$ with a constant
$R_{\check{g}}$, and there is a connection $A$ on $M$ with curvature
form $\Omega$ with $\vert\Omega\vert_{op,\check{g}}$ constant so that
$\Rgen_{(\check{g},A)}$ is a positive constant. By rescaling, we can
obviously arrange for any positive constant to be realized as the function
$\Rgen_{(g',A')}$ for some pair $(g',A')$.  {To show that one can
  achieve $\Rgen_{(g',A')}\equiv 0$ for some pair $(g',A')$, just
  argue as in the proofs of \cite[Theorems 3.9 and 1.3]{MR0375153}.}
So we just need to deal
with the case where $\kappa\ge0$ is not constant.  Choose $(g',A')$ as
just explained so that $\Rgen_{(g',A')}$ is a positive constant. Since
$\kappa$ is assumed $\ge 0$ and nonconstant, we can choose a constant
$c>0$ so that $0\le \min \kappa <c\Rgen_{(g',A')} < \max \kappa$.  Then we
argue as in \cite[Lemma 6.1]{MR0375153}.
\end{proof}

\section{Rigidity Theorems}
\label{sec:rigidity}
In this section, we go in more detail into the
case (2) of Theorem \ref{thm:trichotomy}.  Recall that
case (2) of Theorem \ref{thm:KW-int}
involves a rather strong rigidity statement, as it implies that a
metric on $M$ in class (2) with $R_g\ge 0$ {is necessarily Ricci-flat}.
We will see that something similar happens in our situation as well.

The analogue of the Kazdan-Warner case (2) in our situation is the case
of a simply connected non-spin spin$^c$ manifold $(M,L)$ with
$\alpha^c(M,L)\ne 0$ (this of course means the Dirac operator $D_L$
has to have a nontrivial kernel) but with a pair $(g,A)$ for which
$\mu_1(L_{(g,A)})=0$.

The simplest example of this situation is $M=\bC\bP^2$ with the
Fubini-Study metric {$g_{FS}$}, $L=\cO(3)$ (this is the smallest
ample line bundle with odd Chern class and nonvanishing
$\alpha^c(M,L)$, and is also the anti-canonical bundle), and the
standard connection {$A$} on $L$ with curvature $3\omega$, where
$\omega$ is the K\"ahler form.  {Since the scalar curvature
  $R_{g_{FS}}=6$, then $\Rgen_{(g_{FS},A)}=R_{g_{FS}}-2\cdot
  3|\omega|_{op}\equiv 0$  (see also the calculation in the proof of
  \cite[Corollary 5.2]{BJ}).}
The next theorem shows that in
some sense, this example is typical for this unusual situation.
\begin{theorem}
\label{thm:specialhol}
Let $(M,L)$ be a simply connected spin$^c$ manifold which is non-spin,
and assume that $\alpha^c(M,L)\ne 0$.  Suppose there is a pair $(g,A)$
{\lp}a Riemannian metric $g$ on $M$ and a connection $A$ on $L${\rp}
for which $\mu_1(L_{(g,A)})=0$.  Then $(M,g)$ is pointwise conformal to 
another metric $g'$ for which $\Rgen_{(g',A)}$ vanishes identically.
Moreover, then $(M,L)$ with metric $g'$ admits a parallel spinor, and
$(M,g')$ is a Riemannian product of a K\"ahler manifold and a simply
connected spin manifold with a parallel spinor.  In particular, if $M$
does not split as a product, then $M$ is conformally K\"ahler, and $L$
is either the canonical or the anti-canonical line bundle on $M$.
\end{theorem}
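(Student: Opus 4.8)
The plan is to proceed in four steps, the first and last essentially immediate from results already established. First I would perform the conformal change: since $\mu_1(\sL_{(g,A)})=0$, there is a strictly positive eigenfunction $\varphi\in C^\infty_+(M)$ with $\sL_{(g,A)}\varphi=0$, and setting $g'=\varphi^{\frac{4}{n-2}}g$, the computation in the proof of Lemma \ref{lem:confinv2} (with $\alpha=\frac{n+2}{n-2}$) gives $\Rgen_{(g',A)}=\varphi^{-\alpha}\sL_{(g,A)}(\varphi)\equiv 0$; equivalently $R_{g'}=2|\Omega|_{op}$ pointwise (the operator norm taken with respect to $g'$). This already yields the first assertion.

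Next I would show that harmonic spinors are parallel. Because $\alpha^c(M,L)=\ind D_L\neq 0$, the kernel of the spin$^c$ Dirac operator $D_L$ associated to $(g',A)$ is nonzero; choose $\psi\neq 0$ with $D_L\psi=0$. Applying the Lichnerowicz--Schr\"odinger formula $D_L^2=\nabla^*\nabla+\frac14\Rtw_{(g',A)}$ with $\Rtw_{(g',A)}=R_{g'}+2ic(\Omega)$, pairing $0=D_L^2\psi$ with $\psi$ and integrating gives
\[
0=\int_M|\nabla\psi|^2\,d\vol+\frac14\int_M\bigl(R_{g'}|\psi|^2+2\langle ic(\Omega)\psi,\psi\rangle\bigr)\,d\vol .
\]
Since $ic(\Omega)$ is self-adjoint with spectrum contained in $[-|\Omega|_{op},|\Omega|_{op}]$ and $R_{g'}=2|\Omega|_{op}$, the second integrand is pointwise $\geq (R_{g'}-2|\Omega|_{op})|\psi|^2=\Rgen_{(g',A)}|\psi|^2=0$, forcing both integrals to vanish. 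Hence $\nabla\psi\equiv 0$, so $\psi$ is parallel; being parallel and not identically zero it has constant nonzero norm, hence is nowhere zero. Thus $(M,L)$ with the metric $g'$ carries a parallel spinor.

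Then I would invoke the classification of simply connected complete spin$^c$ manifolds admitting a parallel spinor due to Moroianu: such a manifold is isometric to a Riemannian product $M_1\times M_2$, where $M_1$ is a simply connected K\"ahler manifold with its canonical spin$^c$ structure (line bundle $K_{M_1}^{\pm1}$) and $M_2$ is a simply connected spin manifold carrying a parallel spinor, with the line bundle of the spin$^c$ structure on the product being the pullback of $K_{M_1}^{\pm1}$. Since $M$ is closed, hence complete, and simply connected, this applies to $(M,g')$, identifies $L$ accordingly, and gives the asserted product decomposition. Finally, if $(M,g')$ does not split as a product then one of $M_1,M_2$ is a point; it cannot be $M_1$, since then $M=M_2$ would be spin, contrary to hypothesis, so $M_2$ is a point, $(M,g')$ is K\"ahler (hence $M$ is conformally K\"ahler, $g'$ being pointwise conformal to $g$), and $L=K_M^{\pm1}$ is the canonical or anti-canonical line bundle.

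The hard part will be the junction between the Bochner step and the classification step: one must be certain that the nonzero harmonic spinor genuinely forces $\nabla\psi=0$, which relies on the \emph{exact} balance $R_{g'}=2|\Omega|_{op}$ coming from $\Rgen_{(g',A)}\equiv 0$ (the weaker inequality $\Rgen\geq 0$ would not suffice, as case (1) of the trichotomy shows), and one must check that Moroianu's hypotheses (completeness, simple connectivity) are met and that the canonical spin$^c$ structure produced by his theorem is exactly the one carried by the given bundle $L$. The conformal-change step and the non-splitting step are routine given what precedes.
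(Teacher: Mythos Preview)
Your proposal is correct and follows essentially the same route as the paper: conformal rescaling by a positive first eigenfunction to make $\Rgen_{(g',A)}\equiv 0$, the Lichnerowicz--Schr\"odinger identity to force harmonic spinors to be parallel, and then Moroianu's classification \cite{MR1463835} of simply connected spin$^c$ manifolds with parallel spinors. Your write-up is in fact slightly more detailed than the paper's (you spell out why the non-spin hypothesis forces the spin factor $M_2$ to be a point in the non-splitting case, and you flag the exact balance $R_{g'}=2|\Omega|_{op}$ as the crucial point), but there is no substantive difference in strategy.
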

\begin{proof}
  We apply the argument in the proof of Lemma \ref{lem:confinv2}.
  Let $\varphi$ be a strictly
  positive {eigenfunction} for the first eigenvalue $\mu_1(L_{(g,A)})=0$.
  Replace $g$ by the pointwise conformal metric $g'=\varphi^{\frac{4}{n-2}}g$.
  The calculation in the proof of Lemma \ref{lem:confinv2} gives that
  $\Rgen_{(g',A)}\equiv 0$, and the twisted conformal Laplacian for
  this new metric is just a multiple of the usual Laplacian on functions,
  so its lowest eigenvalue is of course still $0$ (with eigenfunction $1$).
  Replacing $g$ by $g'$, we now have
  $D_L^2=\nabla^*\nabla + \Rtw_{(g',A)}$, where $\Rtw_{(g',A)}\ge 0$
  everywhere, with equality on the $0$-eigenspace of $c(\Omega)$.
  Suppose $\psi$ is a harmonic spinor.  (These exist since 
  $\alpha^c(M,L)\ne 0$.) We have
\begin{equation*}
  0=\langle D_L\psi,D_L\psi\rangle = \Vert \nabla\psi\Vert^2 + \frac14
  \langle \Rtw_{g_1,A}\psi,\psi\rangle,
\end{equation*}
  and since both terms on the right are nonnegative, they both
  have to vanish.  Thus any
  harmonic spinor is necessarily parallel. The last statement follows from
  \cite[Theorem 1.1]{MR1463835}.
\end{proof}
\begin{corollary}
  Suppose $(M,L)$ is a simply connected spin$^c$ manifold which is not
  spin.  Assume that $M$ does not split as a product, does not admit
  a K\"ahler metric, and satisfies $\alpha^c(M,L)\ne 0$.  Then for
  any choice of $(g,A)$ on $(M,L)$, the lowest eigenvalue $\mu_1$ of
  the twisted conformal Laplacian must be negative.
\end{corollary}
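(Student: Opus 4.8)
The plan is to argue by contradiction. Suppose, for some pair $(g,A)$ on $(M,L)$, that the lowest eigenvalue $\mu_1(\sL_{(g,A)})$ of the twisted conformal Laplacian satisfies $\mu_1(\sL_{(g,A)})\ge 0$. I will derive a contradiction by treating the cases $\mu_1>0$ and $\mu_1=0$ separately, exactly the dichotomy already noted in Section~\ref{sec:genpsc}: if $\alpha^c(M,L)\ne 0$ then either there is a pair with $\mu_1=0$, or $\mu_1<0$ for all pairs.

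In the case $\mu_1(\sL_{(g,A)})>0$, I would use the B\"ar-type estimate \eqref{eq:bar-estimate}, $\lambda_1^2(D_L)\ge \frac{n}{4(n-1)}\mu_1(\sL_{(g,A)})>0$, which forces the spin$^c$ Dirac operator $D_L$ to be invertible and hence $\alpha^c(M,L)=\ind D_L=0$. (Equivalently one could first pass by Lemma~\ref{lem:confinv2} to a pointwise conformal metric $g_1$ with $\Rgen_{(g_1,A)}>0$, note via Lemma~\ref{lem:gpsccond} that this is the same as $\Rtw_{(g_1,A)}>0$, and then read off $\ind D_L=0$ from \eqref{Lich-formula}.) Either way, this contradicts the hypothesis $\alpha^c(M,L)\ne 0$.

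In the case $\mu_1(\sL_{(g,A)})=0$, I would apply Theorem~\ref{thm:specialhol}: the hypotheses there ($M$ simply connected, non-spin, $\alpha^c(M,L)\ne 0$, and $\mu_1(\sL_{(g,A)})=0$) are precisely what we have assumed, so the theorem produces a metric $g'$ pointwise conformal to $g$ for which $(M,g')$ is a Riemannian product of a K\"ahler manifold and a simply connected spin manifold carrying a parallel spinor. Since by hypothesis $M$ does not split as a product, the spin factor must be $0$-dimensional, so $(M,g')$ is itself a K\"ahler manifold; in particular $M$ admits a K\"ahler metric, contradicting the standing hypothesis. Combining the two cases, no pair $(g,A)$ with $\mu_1(\sL_{(g,A)})\ge 0$ can exist, i.e.\ $\mu_1(\sL_{(g,A)})<0$ for every $(g,A)$.

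The argument is essentially bookkeeping layered on top of Theorem~\ref{thm:specialhol}; the only points needing care are checking that ``$M$ does not split as a product'' is strong enough to eliminate the spin factor (so that ``conformally K\"ahler'' yields a genuine K\"ahler metric on $M$) and that both escape routes from $\mu_1\ge 0$ really do contradict a hypothesis. The one substantial input — the classification of complete simply connected Riemannian manifolds admitting a parallel spinor (via \cite{MR1463835}) — is already absorbed into Theorem~\ref{thm:specialhol}, so there is no further obstacle in the corollary itself.
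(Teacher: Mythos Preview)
Your proof is correct and follows essentially the same two-case argument as the paper: rule out $\mu_1>0$ via invertibility of $D_L$ (the paper cites Lemma~\ref{lem:confinv2}, you equivalently invoke \eqref{eq:bar-estimate}), and rule out $\mu_1=0$ by applying Theorem~\ref{thm:specialhol}. Your write-up is in fact a bit more explicit than the paper's in spelling out why the $\mu_1=0$ case contradicts the non-K\"ahler hypothesis.
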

\begin{proof}
  If $\mu_1(L_{(g,A)})$ were positive, then by Lemma
  \ref{lem:confinv2}, we could change the metric so that $D_L^2$ is
  strictly positive, which of course would imply that
  $\alpha^c(M,L)=0$, a contradiction.  And if $\mu_1(L_{(g,A)})$ were
  $0$, Theorem \ref{thm:specialhol} would apply, which again would
  give a contradiction.  Thus $\mu_1(L_{(g,A)})<0$.
\end{proof}
\begin{example}
  For example, suppose $M=\bC\bP^{2n}\#\bC\bP^{2n}$.  This manifold does
  not split as a product and does not even admit an almost complex
  structure \cite{MR3936016}, so certainly it is not K\"ahler for any metric.
  The spin$^c$ line bundles $L$ on $M$ are parameterized by pairs
  $(k,\ell)$ of odd integers (representing the two components of
  $c_1(L)$), and
\begin{equation*}
  \alpha^c(M,L)=\alpha^c(\bC\bP^{2n},\cO(k))+\alpha^c(\bC\bP^{2n},\cO(\ell))
\end{equation*}
  can only vanish if $|k|$ and $|\ell|$ are both less than $2n+1$
  \cite{MR508087}. So if one or the other of $k$ and $\ell$ is
  at least $2n+1$ in absolute value, $\mu_1$ must be negative for any
  choice of $(g,A)$ on $(M,L)$.
\end{example}
\begin{example}
  \label{example:Enr}
  Let $M$ be an Enriques surface, that is a complex surface
  double-covered by a K3 surface.  Since a K3 surface has
  $\widehat A$-genus $2$, $M$ has $\widehat A$-genus $1$,
  which contradicts Rokhlin's theorem, and
  thus $M$ cannot be spin.  However, it is spin$^c$, with a canonical
  spin$^c$ line bundle $L$ for which $c_1(L)$ is the unique non-zero
  class in $H^2(M, \bZ)_{\text{tors}}\cong \bZ/2$.  It is also
  well-known (by Yau's theorem) that $M$ admits a Ricci-flat
  K\"ahler-Einstein metric $g$.  Since $L$ admits a flat connection
  $A$ (since $L$ comes from the fundamental group of $M$), we have
  $\Rgen_{(g,A)} \equiv 0$, and $M$ belongs to class (2) in Theorem
  \ref{thm:trichotomy}.  Of course, this $M$ is not simply connected.
\end{example}
We also observe that there is a partial converse to Theorem
\ref{thm:specialhol} that applies to K\"ahler-Einstein Fano varieties.
Once again, this generalizes our previous example of
$(\bC\bP^2,\cO(3))$.
A very similar formulation appears in {\cite{MR1876863}}.
\begin{theorem}{\rm ({cf.\ }Goette-Semmelmann,
    {\cite[Theorem 1.9]{MR1876863}})}
\label{thm:specialhol1}
Let $(M,g)$ be a connected K\"ahler-Einstein manifold with positive
Ricci curvature.  Let $L$ be the anti-canonical line bundle on $M$.
Then $(M,L)$ does not admit $(g,A)$ with $\Rgen_{(g,A)}>0$, but with
the metric $g$ and connection $A$ on $L$ with harmonic curvature
{\lp}necessarily a multiple of $g${\rp} $\Rgen_{(g,A)}=0$.
\end{theorem}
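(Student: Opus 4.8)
The plan is to prove the two assertions in turn; both reduce, after a small amount of index theory, to the standard Weitzenb\"ock picture on a Kähler manifold.

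\emph{No pair $(g,A)$ with $\Rgen_{(g,A)}>0$.} By Lemma~\ref{lem:gpsccond} the condition $\Rgen_{(g,A)}>0$ is equivalent to $\Rtw_{(g,A)}>0$, and the Lichnerowicz--Schr\"odinger formula $D_L^2=\nabla^*\nabla+\tfrac14\Rtw_{(g,A)}$ then makes $D_L$ invertible, forcing $\alpha^c(M,L)=0$; equivalently, such an $(M,L)$ would lie in class~(1) of Theorem~\ref{thm:trichotomy}. So it suffices to show $\alpha^c(M,K_M^{-1})\neq 0$. Since $(M,g)$ is Kähler--Einstein with positive Ricci curvature, the Ricci form is a positive $(1,1)$-form, hence $K_M^{-1}$ is ample and $M$ is Fano. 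Using the identity $\widehat{\mathbf A}(M)e^{c_1(M)/2}=\Todd(M)$ together with Hirzebruch--Riemann--Roch,
\[
\alpha^c(M,K_M^{-1})=\bigl\langle \widehat{\mathbf A}(M)e^{c_1(M)/2},[M]\bigr\rangle=\langle\Todd(M),[M]\rangle=\chi(M,\cO_M),
\]
and by Kodaira vanishing $H^q(M,\cO_M)=0$ for $q>0$, so $\chi(M,\cO_M)=\dim H^0(M,\cO_M)=1\neq 0$. Thus no metric $g$ at all (not only the Kähler--Einstein one) admits a connection $A$ on $K_M^{-1}$ with $\Rgen_{(g,A)}>0$.

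\emph{The canonical connection realizes $\Rgen\equiv 0$.} Let $A_0$ be the Chern connection on $K_M^{-1}$ induced by $g$; its curvature $2$-form $\Omega_0$ is the Ricci form $\rho$. The Kähler--Einstein condition $\mathrm{Ric}_g=\lambda g$, with $\lambda=\tfrac{R_g}{2n}$ ($n=\dim_{\bC}M$ and $R_g$ the constant scalar curvature), gives $\rho=\lambda\omega$, a parallel --- hence harmonic --- $2$-form, which is therefore the unique harmonic representative of the de Rham class of $\Omega_0$ (Lemma~\ref{lem:2normcurvature}) and a constant multiple of the Kähler form. Identifying the spinor bundle of the canonical spin$^c$ structure with $\Lambda^{0,\ast}M=\bigoplus_{q=0}^{n}\Lambda^{0,q}M$, one has $c(\omega)=\sum_{j=1}^n c(e_j)c(Je_j)$ in a local unitary coframe, a sum of $n$ mutually commuting operators each squaring to $-1$; hence $c(\omega)$ acts on $\Lambda^{0,q}M$ by a purely imaginary scalar in $i\{\,n-2q:0\le q\le n\,\}$, and in particular $|\omega|_{op}=n$. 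Consequently $|\Omega_0|_{op}=\lambda n=\tfrac{R_g}{2}$ is constant, and
\[
\Rgen_{(g,A_0)}=R_g-2|\Omega_0|_{op}=R_g-2\cdot\tfrac{R_g}{2}=0 .
\]

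The step I expect to require the most care is the normalization bookkeeping in the second part: making $\rho=\lambda\omega$, $\lambda=R_g/(2n)$, and $|\omega|_{op}=|c(\omega)|_{op}=n$ simultaneously consistent with the conventions used elsewhere in the paper (these identities carry various factors of $2$ in the literature, and the $(\bC\bP^2,\cO(3))$ example should be used as a sanity check, where $R_{g_{FS}}=6$, $\Omega=3\omega$, $\Rgen\equiv 0$). A conceptual cross-check, which also ties this theorem to Theorem~\ref{thm:specialhol}, is that $\Rtw_{(g,A_0)}=R_g+2ic(\Omega_0)$ is nonnegative and vanishes exactly on one of the two extreme eigenbundles of $c(\omega)$; since $M$ is Fano, $\ker D_{K_M^{-1}}=\bigoplus_q H^q(M,\cO_M)=H^0(M,\cO_M)\cong\bC$ sits in precisely that $\Rtw$-null line, so the harmonic spinor there is parallel --- exactly the rigidity phenomenon of case~(2), seen from the converse direction.
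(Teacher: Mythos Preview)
Your proof is correct and follows essentially the same route as the paper's: both parts hinge on the Todd-genus computation $\alpha^c(M,K_M^{-1})=\chi(M,\cO_M)=1$ via Kodaira vanishing for the obstruction, and on the K\"ahler--Einstein identity $\Omega_0=\lambda\omega$ together with the Clifford-norm calculation $2|\Omega_0|_{op}=R_g$ for the realization of $\Rgen\equiv 0$. Your version is more explicit in the second part---you actually compute the spectrum of $c(\omega)$ on $\Lambda^{0,\ast}M$ to get $|\omega|_{op}=n$, whereas the paper simply asserts $2|\Omega|_{op,g}=R_g$ with a reference to \cite[Lemma IV.11.10]{lawson89:_spin}---and your closing paragraph linking the parallel spinor in $H^0(M,\cO_M)$ back to the rigidity of Theorem~\ref{thm:specialhol} is a nice conceptual addition not present in the paper.
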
  
\begin{proof}
  Note that $\alpha^c(M,L)$ for this choice of $L$ coincides with the
  Todd genus, which is the alternating sum of the Hodge numbers
  $h^{0,q}(M)$.  Since the Ricci curvature is positive, the $h^{0,q}$
  vanish for $q>0$ by the Kodaira vanishing theorem, the Bochner
  formula, and Serre duality, so the Todd genus is $1$ \cite[Corollary
    IV.11.12]{lawson89:_spin}.  Thus $M$ cannot admit positive
  generalized scalar curvature. On the other hand, since $M$ is K\"ahler-Einstein, the
  curvature $2$-form of $L$ (with its standard connection) coincides
  with the Ricci form, and the Ricci tensor is $\frac{R_g}{2}g$.  That
  implies that $2|\Omega|_{op,g}=R_g$, so that $\Rgen_g=0$.  (See
  \cite[Lemma IV.11.10]{lawson89:_spin} for a similar calculation.)
\end{proof}
\begin{remark}
  {While Theorem \ref{thm:specialhol1} and \cite[Theorem
      1.9]{MR1876863} both address the nonexistence of metrics with
    scalar curvature bounded below by curvature form norms, it is important
    to note that the specific norms used in each theorem are
    distinct.}
\end{remark}  

\section{The Index Difference Map}
\label{sec:inddiff}

We start with some basic definitions.
\begin{definition}
\label{def:inddiff}
Let $M$ be a closed spin manifold, $\dim M=n$, with a fixed psc metric
$g_0$.  The index difference homomorphism 
\begin{equation*}
\inddiff(M)\co \pi_q(\cR^+(M))\to KO_{q+n+1}
\end{equation*}
is defined by sending the homotopy class of $f \co (S^q,*) \to
(\cR^+(M),g_0)$ to the index obstruction to filling in $f$ to a map
$(D^{q+1},*)\to (\cR^+(M),g_0)$, or in other words the $KO_*$-valued
index of the Dirac operator on the spin manifold
$ N^{n+q+1}=M^n\times
  D^{q+1}\cup_{M^n\times S^q\times \{0\}}M^n\times S^q\times [0,\infty)$
  with Riemannian metric restricting to $f(x)+dt^2$ on $M\times \{x\}
  \times [0,\infty)$, for $x\in S^q$.
\end{definition}
The index above is well-defined because the scalar curvature of $N$ is
uniformly positive outside of a compact set, and thus the Dirac
operator on $N$ is Fredholm{{;} see
  \cite{MR3683115,MR3961330} for detailed description{s} of the
  index difference.} 

We can do something analogous in the spin$^c$ case.
\begin{definition}
Let $(M,L)$ be a spin$^c$ manifold with a fixed choice of basepoint
$(g_0,A_0)\in \cR^{\gen}(M,L)$. The \emph{spin$^c$ index difference map}
\begin{equation*}
\inddiff^c(M,L)\co \pi_q(\cR^{\gen}(M,L))\to KU_{q+n+1}
\end{equation*}
is defined just as in the spin case, by sending the homotopy class of
\begin{equation*}
f\co (S^q,*) \to (\cR^{\gen}(M,L),(g_0,A_0))
\end{equation*}
to the index obstruction to filling in
$f$ to a map $(D^{q+1},*)\to (\cR^{\gen}(M,L),(g_0,A_0))$,
by taking the usual $KU_*$-valued index of
the complex Dirac operator $D_{\tilde L}$ on the spin$^c$ manifold
$(N^{n+q+1},\tilde L):=(M^n\times D^{q+1}\cup_{M^n\times S^q\times
  \{0\}}M^n\times S^q\times [0,\infty), \tilde L)$, with the same
condition on a Riemannian metric as in the spin case, and with $\tilde
L = p^*L$, where $p$ is the natural projection $p: N^{n+q+1}\to M$.
\end{definition}
The following propositions show that the index difference map
is nontrivial (Theorem \ref{thm:psc-space}, (i)).
\begin{proposition}
\label{prop:manycomponents}
Let $(M,L)$ be a closed connected $n$-dimensional spin$^c$ manifold,
with $n=4k-1$, $k\ge 2$ and admitting generalized positive scalar
curvature. Then the {map
  \[
  \inddiff^c(M,L)\co\pi_0(\cR^{\gen}(M,L))\to \bZ
  \]
takes infinitely many nonzero
  values}, and in particular, $\cR^{\gen}(M,L)$ 
has infinitely many connected components.
\end{proposition}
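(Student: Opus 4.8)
The plan is to imitate the classical argument of Gromov--Lawson, Hitchin, and Carr producing infinitely many path-components of $\cR^+(M)$ for a spin manifold $M$ with $n=4k-1$, using the spin$^c$ index difference in place of the spin one. First I would fix a base pair $(g_0,A_0)\in\cR^{\gen}(M,L)$, which exists by hypothesis; by Lemma \ref{lem:gpsccond} this means $\Rtw_{(g_0,A_0)}>0$, so the spin$^c$ Dirac operator $D_{L}$ is invertible for this data, and the same holds for the product-type data on the non-compact manifold $N^{n+q+1}$ appearing in the definition of $\inddiff^c$ when $q=0$. The target here is $KU_{n+1}=KU_{4k}=\bZ$, so it suffices to produce, for each $m\in\bZ$ (or at least for infinitely many $m$), a generalized-psc pair $(g_m,A_m)$ on $(M,L)$ whose image under $\inddiff^c(M,L)$ is $m$ (or grows without bound in $m$); then these pairs lie in pairwise distinct path-components of $\cR^{\gen}(M,L)$.

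The pairs $(g_m,A_m)$ would be built exactly as in Carr's construction. Start from $(g_0,A_0)$ and form the cylinder $M\times[0,1]$ with a generalized-psc metric that is a product near the ends; glue in a sequence of $m$ ``Gromov--Lawson tori'' (or more precisely, do $m$ connect-sums with the generator of $\Omega^{\spin}_{4k-1}$-adjacent exotic structures realized by a suitable bordism to $S^{n}$) to obtain a generalized-psc metric on $M\times[0,1]$ that agrees with $g_0$ on $M\times\{0\}$ and with some $g_m$ on $M\times\{1\}$; crucially, throughout this construction one keeps the line bundle pulled back from the single $M$-factor and uses Theorem \ref{them:spincsurgery} (the spin$^c$ surgery theorem, surgeries here being in codimension $\geq 3$) to propagate the condition $\Rtw>0$, hence $\Rgen>0$, across each surgery. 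The manifold $N$ associated to the loop in $\cR^{\gen}(M,L)$ traced out by this path (closed up by reversing $g_m$ back to $g_0$) is then spin$^c$-bordant, rel boundary, to the original cylinder with $m$ copies of a fixed $4k$-dimensional spin$^c$ manifold $X$ with $\alpha^c(X)\neq 0$ attached; since the index is bordism-invariant and additive, $\inddiff^c(M,L)(g_m)=m\cdot\alpha^c(X)\neq 0$ for $m\neq 0$. Choosing $X$ with $\alpha^c(X)=1$ (e.g.\ a suitable spin manifold times a trivial line bundle, with $\widehat A(X)=1$, which exists in every dimension $4k$) gives that $\inddiff^c$ takes all integer values, in particular infinitely many nonzero ones, and the $g_m$ for distinct $m$ lie in distinct components.

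The main obstacle is verifying that the Gromov--Lawson--Carr ``end'' construction can be carried out \emph{simultaneously for the metric and the connection} while preserving $\Rgen>0$: one must check that the surgeries needed are indeed in codimension $\geq 3$ so that Theorem \ref{them:spincsurgery} applies, that the connections can be patched compatibly across the surgery regions (here the remark after Lemma \ref{lem:2normcurvature}, allowing one to take the curvature harmonic, or simply taking a pulled-back flat-away-from-$M$ connection, should suffice since $\tilde L=p^*L$ has curvature supported, up to gauge, in the $M$-direction), and that the resulting non-compact spin$^c$ manifold $N$ has its complex Dirac operator $D_{\tilde L}$ uniformly invertible outside a compact set — hence Fredholm — because $\Rtw>0$ there. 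Granting this, the index computation is a standard bordism-and-additivity argument, and the passage from ``$\inddiff^c$ nonzero on infinitely many classes'' to ``infinitely many components of $\cR^{\gen}(M,L)$'' is immediate since points in the same path-component have the same image under $\inddiff^c$.
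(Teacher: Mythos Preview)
Your approach is essentially the paper's: form the bordism $W=(M\times[0,1])\# N^{4k}$ for a closed spin manifold $N$ with $\widehat A(N)\ne 0$, push the generalized-psc pair across $W$ via the spin$^c$ surgery/bordism theorem, and read off the index difference as $m\,\widehat A(N)$ when $m$ copies of $N$ are used.

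Two points to tighten. First, your claim that a spin $X^{4k}$ with $\widehat A(X)=1$ exists in every dimension $4k$ is false when $k$ is odd (the $\widehat A$-genus of a closed spin $4k$-manifold is then necessarily even); the paper only asserts $\widehat A(N)\ne 0$ and, for odd $k$, constructs such $N$ from an $E_8$-plumbing. This does not affect the stated conclusion of ``infinitely many nonzero values,'' only your parenthetical strengthening to ``all integer values.'' Second, the paper is explicit that $N$ should be \emph{$2$-connected}: this is exactly what guarantees that the spin$^c$ structure on $M\times[0,1]$ extends uniquely over the connect sum and that the inclusion $M\hookrightarrow W$ is $2$-connected, so the bordism theorem from \cite{BJ} applies. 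Your language about ``Gromov--Lawson tori'' and ``$\Omega^{\spin}_{4k-1}$-adjacent exotic structures'' is off-target here --- no exotic spheres are needed, just a connect sum with a closed $2$-connected spin $4k$-manifold of nonzero $\widehat A$-genus.
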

\begin{proof}
  We choose a $2$-connected spin manifold $N^{4k}$ with $\widehat
  A(N)\ne 1$. If $k$ is even, this can be a product of copies of the
  ``Bott manifold,'' for which a construction is given in
  \cite{Bott-manifold}.  If $k\geq 2$ is odd, $\widehat A(N)$ is
  necessarily even and one can construct $N$ as follows.  Use an $E_8$
  plumbing \cite{plumbing} to get a parallelizable $4k$-manifold
  $Z^{4k}$ with boundary a homotopy sphere $\Sigma^{4k-1}$ (a
  generator of $bP_{4k}$). This homotopy sphere has finite order in
  the group $\Theta_{4k-1}$ of homotopy spheres, so we can take a
  boundary connected sum of finitely many copies of $Z^{4k}$ to get a
  parallelizable manifold $V^{4k}=Z^{4k}\#_{\partial}\cdots\#_{\partial}Z^{4k}$
  with boundary $S^{4k-1}$.  Glue in a disk:
  \begin{equation*}
  N^{4k}:= V^{4k}\cup_{S^{4k-1}} D^{4k}.
  \end{equation*}  
  Now $N^{4k}$ is our desired manifold: it is closed $4k$-manifold which
  is highly connected, hence spin, and has only one nonzero {Pontryagin
  class}, the top Pontryagin class $p_k$. Since
  the signature of $Z^{4k}$ is $8$, $N$ has nonzero signature and
  nonzero $\widehat A$-genus.

  Now we turn to our spin$^c$ manifold $(M,L)$. We choose $(g,h,A)\in
  \cR^{\gen}(M,L)$ and consider the spin$^c$ bordism $W=(M\times
     [0,1]) \# N$ from $M$ to itself, where $N$ is attached to the
     interior of the cylinder $M\times [0,1]$. The bordism $W$
   has natural spin$^c$ structure coming from $M$, since the spin$^c$
   structure on $M$ defines one on the cylinder $M\times [0,1]$ and
   then this spin$^c$ structure extends uniquely to $W$ since $N$ is
   $2$-connected.  We use a product metric $g+dt^2$ to extend the
   metric from $M\times \{0\}$ to $M\times [0,\varepsilon]$ for some
   $\varepsilon>0$.  By the proof of the bordism theorem \cite[Theorem
     4.3]{BJ} and the fact that the inclusion of either boundary copy
   of $M$ into $W$ is $2$-connected, we can push the triple $(g,h,A)$
   across the bordism $W$ to obtain $(g',h',A')\in \cR^{\gen}(M,L)$ at
   $M\times \{1\}$.  The value of the index difference is then
   obtained by computing the index of the Dirac operator on the
     manifold
\begin{equation*}
{W':=} M\times (-\infty,0]\cup_{M\times\{0\}}W\cup_{M\times\{1\}}M\times
     [1,\infty)
\end{equation*}
using $(g+dt^2,h,A)$ on $M\times (-\infty,0]$ and $(g'+dt^2,h',A')$ on
     $M\times [1,\infty)$.  This is just the sum of the index of Dirac
       on $M\times \bR$, which vanishes, with $\widehat
       A(N)$,\footnote{{\ This is because there is bordism, just a
         product with $[0,1]$ off a compact set, between our manifold
         {$W'$} and $(M\times \bR, g+dt^2)\sqcup N$, for which
         the index is obviously the sum {of indices of the
           disjoint manifolds}, and because of the bordism invariance
         of the index of the Dirac operator.}}  and so the index
       difference is nontrivial.  {We can now get infinitely many
         different values for the index difference by replacing $N$ by
         $\overbrace{N\#N\#\cdots\#N}^k$ in this construction
         to get an index difference of $k\widehat A(N)$ (for any
         positive $k$).}
\end{proof}

Before stating the next result, we need some notation.  Let $ c\co
KO_n\to KU_n $ denote the ``complexification map'' {\lp}which is zero if $n$ is
not divisible by $4$, an isomorphism if $n\equiv0 \pmod8$, and a
monomorphism with cokernel $\bZ/2$ if $n\equiv4 \pmod8${\rp}.  We also
need an observation: if $n$ is divisible by 4, there is always a closed
$n$-dimensional spin$^c$ manifold $(M,L)$ which is not spin with
$\alpha^c(M,L)=1$.  For example, we can take $M=\bC\bP^{n/2}$ and
$L=\cO\bigl(1+\frac{n}{2}\bigr)$.  Take such an $(M,L)$
and fix a pair $(g_M,A)$, which \emph{cannot} have
generalized scalar curvature $\Rgen_{(g,A)}$ positive since
the index of $D_L$ is nonzero.

Consider the spin$^c$ manifold $(S^k\times M,p^*L)$, where $k\geq 2$,
and let $p\co S^k\times M\to M$ be projection on the second factor. The
connection $A$ on $L$ gives a connection $p^*A$ on $p^*L$. Let
$\cR^+(S^k)$ be the space of psc metrics on $S^k$, with basepoint the
standard round metric.  Now we define a map
\begin{equation}\label{eq:sigma}
  \sigma\co \pi_q(\cR^+(S^k))\to \pi_q(\cR^{\gen}(S^k\times M))
\end{equation}  
as follows. Fix a map $f\co S^q\to \cR^+(S^k)$ representing a
homotopy class, and note that we can find a scale $t>0$
small enough so that the generalized scalar curvature
\begin{equation*}
  \Rgen_{(t^2g+g_M,p^*A)}=(t^{-2}R_{g}+ R_{g_M}) - 2|\Omega|_{op}
\end{equation*}    
is positive for all $g\in f(S^q)\subset \cR^+(S^k)$, the compact image
of our map $S^q\to \cR^+(S^k)$.  Then the formula $x\mapsto
(t^2f(x)+g_M,p^*A)$, $x\in S^q$, defines $\sigma([f])$, since it's
clear that up the homotopy class of this family of metrics only
depends on the homotopy class of $f$ and not on the parameter $t$.
Once $t$ is fixed, $\sigma$ is obtained by taking the product of the
family of metrics $t^2f$ (which represents the same homotopy class as
$f$) with the metric $g_M$ on $M$, and this operation of direct
product is compatible with addition of homotopy classes, so in this
way we get a homomorphism $\pi_q(\cR^+(S^k))\to
\pi_q(\cR^{\gen}(S^k\times M))$.  We obtain the following diagram:
\begin{equation}\label{eq:compatibility}
\begin{diagram}
\setlength{\dgARROWLENGTH}{1.2em}
       \node{\pi_q\cR^+(S^k)}
           \arrow[5]{e,t}{\inddiff(S^k)}
           \arrow{s,r}{\sigma}
       \node[5]{KO_{q+k+1}}
           \arrow{s,r}{\tilde c}
           \arrow[2]{e,t}{c}
           \node[2]{KU_{q+k+1}}
           \arrow{wsw,b}{\mathbb{B}}
           \\
       \node{\pi_q\cR^{\gen}(S^k\times M)}
           \arrow[5]{e,t}{\inddiff^c(S^k\times M,p^*L)}
       \node[5]{KU_{q+k+n+1}}
\end{diagram}
\end{equation}
where $\tilde c=\mathbb{B}\circ c$, and $\mathbb{B}$ is the Bott
isomorphism.
\begin{proposition}
  Let $(M,L)$ be as above and $q+k+1$ be a multiple of 4. Then the
  diagram {\rm (\ref{eq:compatibility})} commutes.
\end{proposition}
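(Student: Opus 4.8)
The plan is to identify both composites in \eqref{eq:compatibility} with the $KU_*$-index of one and the same Dirac operator, and then to invoke multiplicativity of the index under external products. Fix a class $[f]\in\pi_q\cR^+(S^k)$ represented by $f\co S^q\to\cR^+(S^k)$, and choose a scale $t>0$ small enough that $(t^2f(x)+g_M,p^*A)\in\cR^{\gen}(S^k\times M)$ for every $x\in S^q$, as in the definition of $\sigma$. Let
\[
N_1:=S^k\times D^{q+1}\cup_{S^k\times S^q\times\{0\}}S^k\times S^q\times[0,\infty)
\]
be the cylindrical-end manifold used to compute $\inddiff(S^k)([f])$, with a metric $g_{N_1}$ restricting to $t^2f(x)+dt^2$ on $S^k\times\{x\}\times[0,\infty)$; since $t^2f$ and $f$ are homotopic (by constant rescaling), the $KO_{q+k+1}$-index of the Clifford-linear Dirac operator of $(N_1,g_{N_1})$ is $\inddiff(S^k)([f])$. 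On the other side, by construction $\sigma([f])$ is the family $x\mapsto(t^2f(x)+g_M,p^*A)$, and the cylindrical-end spin$^c$ manifold that computes $\inddiff^c(S^k\times M,p^*L)(\sigma([f]))$ is canonically diffeomorphic to $N_1\times M$, carrying the product metric $g_{N_1}+g_M$, the product spin$^c$ structure, and the line bundle $p_M^*L$ pulled back along the projection $p_M\co N_1\times M\to M$.

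Next I would carry out the multiplicativity computation. First, the spin$^c$ Dirac operator $D_{p_M^*L}$ on $N_1\times M$ is Fredholm: the generalized scalar curvature of the product metric is $R_{g_{N_1}}+\Rgen_{(g_M,A)}$, and on the cylindrical end $R_{g_{N_1}}$ is uniformly positive (for $t$ small) while $\Rgen_{(g_M,A)}$ is bounded below, so $\Rgen$ is uniformly positive outside a compact set. Under the product spin$^c$ structure the $p_M^*L$-twisted spinor bundle of $N_1\times M$ splits as the graded tensor product of the complex spinor bundle of the spin manifold $N_1$ with the spin$^c$ spinor bundle of $(M,L)$, and $D_{p_M^*L}$ is the corresponding graded tensor product of the complex Dirac operator of $N_1$ with the spin$^c$ Dirac operator $D_L$ of the closed manifold $M$. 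Multiplicativity of the Fredholm index under external product with a closed manifold then gives in $KU_*$
\[
\ind D_{p_M^*L}=c\bigl(\inddiff(S^k)([f])\bigr)\cdot\alpha^c(M,L),
\]
using that the complex Dirac operator of the spin manifold $N_1$ is the complexification of its Clifford-linear Dirac operator, so that its $KU_{q+k+1}$-index is $c\bigl(\inddiff(S^k)([f])\bigr)$. Since $\alpha^c(M,L)=1\in KU_n$ is the generator corresponding to the relevant power of the Bott class, multiplication by it is the Bott isomorphism $\mathbb{B}\co KU_{q+k+1}\xrightarrow{\ \sim\ }KU_{q+k+n+1}$, whence
\[
\inddiff^c(S^k\times M,p^*L)(\sigma([f]))=\mathbb{B}\bigl(c(\inddiff(S^k)([f]))\bigr)=\tilde c\bigl(\inddiff(S^k)([f])\bigr).
\]
This is the asserted commutativity of \eqref{eq:compatibility}; the hypothesis $4\mid q+k+1$ is what gives the identity content, since otherwise $c$ annihilates $KO_{q+k+1}$.

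I expect the main obstacle to be making the multiplicativity step precise, i.e.\ that the index-obstruction construction is compatible with taking a product with a closed spin$^c$ manifold. This means checking that the cylindrical-end manifold and its Dirac operator genuinely split as an external product, tracking the grading and real-structure conventions so that passing from the spin index on $N_1$ to the spin$^c$ index on $N_1\times M$ becomes complexification followed by Bott periodicity, and establishing a Fredholm product formula for the index in the cylindrical-end setting, in the spirit of \cite{MR3683115,MR3681394}. The remaining ingredients --- Fredholmness at infinity and the identification of $\alpha^c(M,L)=1$ with a Bott generator --- are routine.
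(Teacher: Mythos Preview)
Your proposal is correct and follows essentially the same approach as the paper: both identify the cylindrical-end spin$^c$ manifold computing $\inddiff^c(S^k\times M,p^*L)(\sigma([f]))$ with the product $N_1\times M$, appeal to multiplicativity of the Fredholm index under external product with the closed spin$^c$ Dirac operator on $(M,L)$, and use $\alpha^c(M,L)=1$ to conclude. Your write-up is in fact more explicit than the paper's about the graded tensor product decomposition, the Fredholmness check at infinity, and the role of the Bott isomorphism $\mathbb{B}$; the paper simply cites Atiyah--Singer \cite{MR0236950} for the multiplicativity step and leaves these details implicit.
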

\begin{proof}
Suppose we are given a map $g\co (S^q,*)\to (\cR^+(S^k),g_0)$, where
$g_0$ is the standard metric.  By scaling $g$ we obtain a map
$\sigma(g)\co(S^q,*)\to (\cR^{\gen}(S^k\times M),g_0+g_M)$. Then
$\inddiff(g)$ is defined by extending $g$ from a family of metrics on
$S^q\times S^k$ to a family of metrics on $D^{q+1}\times S^k$ which
are product metrics on a collar neighborhood of the boundary, and for
each $x\in S^q$, extending via a cylinder metric $g(x)+ dt^2$ on
$S^k\times [0, \infty)$ to get a complete metric on
$\bR^{q+1}\times S^k$ with positive scalar curvature off a compact
set.  The complex Dirac operator for this metric is Fredholm, and has
index given by $c(\inddiff(g))$.  Now take the Riemannian product with
$(M, g_M)$ and equip this with the line bundle $p^*L$.  We can repeat
the same construction as before with the spin$^c$ Dirac operator, and
since this is the external product of the spin$^c$ Dirac on $M$ with
the Dirac on $D^{q+1}\times S^k$, the indices multiply {via
  the multiplicativity of the index as explained in
  \cite[\S5 and \S9]{MR0236950};}
here we use
that $\alpha^c(M,L)=1$.  So the conclusion follows.
\end{proof} 

Now we can prove (ii) from Theorem \ref{thm:psc-space}.
\begin{corollary}
\label{cor:manpiq}
There exists a simply connected spin$^c$ manifold $(M,L)$,
not spin, for which
infinitely many homotopy groups of $\cR^{\gen}(M,L)$ are non-zero.
\end{corollary}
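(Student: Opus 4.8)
The plan is to produce the required manifold explicitly as a product $(M,L)=(S^k\times M_0,\,p^*L_0)$, where $M_0=\bC\bP^{n/2}$ and $L_0=\cO(1+n/2)$ for some fixed multiple $n$ of $4$ (concretely $M_0=\bC\bP^2$ already works), so that $\alpha^c(M_0,L_0)=1$ as recorded above, $p\co S^k\times M_0\to M_0$ is the projection, and $k$ is chosen large enough (say $k\ge 6$) for the non-triviality theorem of \cite{MR3681394} for the spin index difference on $\cR^+(S^k)$ to apply. First I would check the standing hypotheses of the statement: $S^k\times M_0$ is simply connected (as $k\ge 2$ and $M_0$ is), it is not spin since $w_2(S^k\times M_0)=1\otimes w_2(M_0)\ne 0$, and $\cR^{\gen}(S^k\times M_0,p^*L_0)\ne\emptyset$ with a distinguished basepoint --- this last point is exactly the rescaling observation already used in the construction of the map $\sigma$ of \eqref{eq:sigma}, namely that for a round metric $g_{S^k}$ of positive scalar curvature and $t>0$ small one has $\Rgen_{(t^2g_{S^k}+g_{M_0},\,p^*A)}=t^{-2}R_{g_{S^k}}+R_{g_{M_0}}-2|\Omega|_{op}>0$, since the first term dominates.

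Next, for each non-negative integer $q$ with $q+k+1$ a positive multiple of $4$, I would feed the commutative diagram \eqref{eq:compatibility} with a class witnessing non-triviality of $\inddiff(S^k)$. By the quoted consequence of \cite{MR3681394}, since $KO_{q+k+1}\cong\bZ\ne 0$ (the residue $q+k+1$ being $0$ or $4$ mod $8$), there is $[f]\in\pi_q(\cR^+(S^k))$ with $\inddiff(S^k)([f])\ne 0$. For those two residues the complexification $c\co KO_{q+k+1}\to KU_{q+k+1}$ is injective (an isomorphism when $\equiv 0\pmod 8$, a monomorphism with cokernel $\bZ/2$ when $\equiv 4\pmod 8$), hence $\tilde c=\mathbb B\circ c$ is injective and $\tilde c\bigl(\inddiff(S^k)([f])\bigr)\ne 0$. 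Commutativity of \eqref{eq:compatibility} (valid precisely because $q+k+1$ is a multiple of $4$) identifies this non-zero element with $\inddiff^c(S^k\times M_0,p^*L_0)\bigl(\sigma([f])\bigr)$, so $\sigma([f])\ne 0$ and therefore $\pi_q\bigl(\cR^{\gen}(S^k\times M_0,p^*L_0)\bigr)\ne 0$. As $q$ ranges over the infinitely many non-negative integers with $q+k+1$ a multiple of $4$, this exhibits infinitely many non-zero homotopy groups, which is the assertion.

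The one deep ingredient, and the main obstacle, is the input itself: non-triviality of the \emph{spin} index difference $\inddiff(S^k)\co\pi_q(\cR^+(S^k))\to KO_{q+k+1}$ throughout the range $q+k+1\equiv 0\pmod 4$, which is exactly the (hard) theorem of Botvinnik--Ebert--Randal-Williams recalled in the introduction; everything after that is formal. The reason the argument closes is structural: the congruence $q+k+1\equiv 0\pmod 4$ is simultaneously the hypothesis that makes \eqref{eq:compatibility} commute (via the Bott isomorphism $\mathbb B$ and the vanishing of $c$ outside degrees divisible by $4$) and the condition under which $c$ is injective, so that non-triviality of the real index obstruction is faithfully transported to the complex one. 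A minor point to double-check along the way is that $\sigma$ is a well-defined homomorphism and that the scale $t$ can be chosen uniformly on the compact image of a representative of $[f]$ --- both already established in the construction preceding \eqref{eq:compatibility}.
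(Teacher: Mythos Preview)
Your proposal is correct and follows essentially the same approach as the paper: the paper takes the concrete instance $M_0=\bC\bP^2$, $L_0=\cO(3)$, $k=7$, verifies $\alpha^c(\bC\bP^2,\cO(3))=1$, and then invokes the non-triviality of $\inddiff(S^7)$ from \cite{MR3681394} together with diagram \eqref{eq:compatibility}. Your write-up is more explicit about verifying the standing hypotheses (simple connectivity, non-spin, $\cR^{\gen}\ne\emptyset$) and about why the congruence $q+k+1\equiv 0\pmod 4$ makes the argument close, but the strategy is identical.
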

\begin{proof}[Proof of Corollary]
  Simply take $M$ to be $\bC\bP^2\times S^7$, $L=p^*\cO(3)$,
  $p$ the projection of $\bC\bP^2\times S^7$ onto $\bC\bP^2$.
  If $x$ is the standard generator of $H^2(\bC\bP^2;\bZ)$, the $\widehat A$
  class is $\widehat {\mathbf A}(\bC\bP^2)=1-\frac{x^2}{8}$ and so
  $\ind D_{\cO(3)}$ is the coefficient of $x^2$ in
  $\left(1-\frac{x^2}{8}\right)(1+\frac{3x}{2}+\frac{9x^2}{8})$,
  which is $1$, and use the known
  nontriviality of the index difference map for $S^7$ (see for
  example \cite[Theorem A]{MR3681394}).
\end{proof}
\begin{remark}
  We stated in Conjecture \ref{BR-conj} that nontriviality of the
  topology of $\cR^{\gen}(M,L)$ is in fact quite a general
  phenomenon. So one shouldn't attach much significance to the special
  nature of this example.
\end{remark}


\bibliographystyle{amsplain}
\bibliography{SpincMetrics}

\end{document}